\documentclass[journal,twoside,web]{ieee/ieeecolor}

\usepackage{enumitem}

\usepackage{ieee/generic}

\newcommand{\update}[1]{{#1}}

\usepackage{amsmath,amssymb,amsfonts}
\usepackage{booktabs}
\usepackage{cite}
\usepackage{algorithmic}
\usepackage{graphicx}
\usepackage{textcomp}
\usepackage{xcolor}
\usepackage{colortbl}
\usepackage{caption}
\usepackage{tikz}

\makeatletter
\let\NAT@parse\undefined
\makeatother
\usepackage[colorlinks]{hyperref}
\hypersetup{
  colorlinks   = true, 
  urlcolor     = blue, 
  linkcolor    = red, 
  citecolor    = green!50!black   
}


\providecommand{\customgenericname}{}
\newcommand{\newcustomtheorem}[2]{%
  \newenvironment{#1}[1]
  {%
   \renewcommand\customgenericname{#2}%
   \renewcommand\theinnercustomgeneric{##1}%
   \innercustomgeneric
  }
  {\endinnercustomgeneric}
}

\newcustomtheorem{customthm}{Theorem}
\newcustomtheorem{customlemma}{Lemma}

\usepackage{bm, Shortcuts_OPT}
\usepackage{tcolorbox}
\newtheorem*{Theorem*}{Theorem}
\newtheorem{Def}{Definition}
\newtheorem{Corollary}{Corollary}
\newtheorem{Assumption}{\textbf{A}\!\!}

\allowdisplaybreaks[4]

\def\BibTeX{{\rm B\kern-.05em{\sc i\kern-.025em b}\kern-.08em
    T\kern-.1667em\lower.7ex\hbox{E}\kern-.125emX}}
\markboth{\journalname, VOL. XX, NO. XX, XXXX 2017}
{Author \MakeLowercase{\textit{et al.}}: Preparation of Papers for IEEE TRANSACTIONS and JOURNALS (February 2017)}

\begin{document}
\title{Tighter Analysis for Decentralized Stochastic Gradient Method: Impact of Data Homogeneity}
\author{Qiang Li, \IEEEmembership{Student Member, IEEE}, and Hoi-To Wai, \IEEEmembership{Member, IEEE}
\thanks{Qiang Li and Hoi-To Wai are with Dept.~of System Engineering \& Engineering Management, Faculty of Engineering,
        The Chinese University of Hong Kong, Shatin District, Hong Kong SAR. Emails:~\url{liqiang@se.cuhk.edu.hk}, \url{htwai@se.cuhk.edu.hk}.}
}

\maketitle

\begin{abstract}
This paper studies the effect of data homogeneity on multi-agent stochastic optimization. We consider the decentralized stochastic gradient (DSGD) algorithm and perform a refined convergence analysis. Our analysis is explicit on the similarity between Hessian matrices of local objective functions which captures the degree of data homogeneity. We illustrate the impact of our analysis through studying the transient time, defined as the minimum number of iterations required for a distributed algorithm to achieve comparable performance as its centralized counterpart. When the local objective functions have similar Hessian, the transient time of DSGD can be as small as ${\cal O}(n^{2/3}/\rho^{8/3})$ for smooth (possibly non-convex) objective functions, ${\cal O}(\sqrt{n}/\rho)$ for strongly convex objective functions, where $n$ is the number of agents and $\rho$ is the spectral gap of graph. These findings provide a theoretical justification for the empirical success of DSGD. Our analysis relies on a novel observation with higher-order Taylor approximation for gradient maps that can be of independent interest. Numerical simulations validate our findings.
\end{abstract}

\begin{IEEEkeywords}
distributed optimization, decentralized stochastic gradient descent, non-convex optimization, convex optimization, transient time
\end{IEEEkeywords}

\section{Introduction}\label{sec:introduction}
\IEEEPARstart{W}{e} consider a system of $n$ agents associated via a connected and undirected graph with self loops $G = (V,E)$, where $V = [n] := \{1,\ldots,n\}$ is the set of agents, $E\subseteq N\times N$ is the edge set between the agents.
Our aim is to tackle the distributed optimization problem:
\begin{align}\label{eq:opt}
    &\min_{\prm_i\in \RR^d, i \in [n] }~  \frac{1}{n}\sum_{i=1}^{n} f_i(\prm_{i}) ~~\text{s.t.}~~\prm_i = \prm_j, ~ \forall i, j\in E,
\end{align}
where for each $i \in V$, the twice continuously differentiable local loss function $f_i:\RR^{d} \mapsto \RR$ can be accessed only by agent $i$. Every agent $i\in V$ can receive and send information only from its neighbors $\{ j: (i,j)\in E \}$. Note that as $G$ is connected, tackling \eqref{eq:opt} is equivalent to the problem of minimizing $f(\prm) := (1/n) \sum_{i=1}^n f_i( \prm )$. 
We assume that (\ref{eq:opt}) is lower bounded, i.e., $f^\star := \min_{\prm \in \RR^d} f(\prm) > -\infty$. 

The distributed optimization problem (\ref{eq:opt}) has found applications in wireless sensor networks \cite{cohen2017projected, mateos2012distributed}, multi-agent reinforcement learning \cite{doan2019finite},  distributed machine learning \cite{forrester2007multi, nedic2017fast, cohen2016distributed}, federated learning \cite{brisimi2018federated}, etc.; also see the survey papers \cite{chang2020distributed, nedic2020distributed}. We are interested in the setting where $f_i(\cdot)$ is defined on a large or streaming dataset accessed by the $i$th agent. To model this case, $f_i(\cdot)$ takes the form of a stochastic function:
\beq \label{eq:fi}
    f_i(\prm_i) \eqdef \EE_{Z_i\sim {\sf B}_i} [\ell_i (\prm_i; Z_i)],
\eeq 
such that $Z_{i}\sim {\sf B}_{i}$ represents a sample drawn from the local data distribution ${\sf B}_{i}$ and $\ell_i( \prm ; Z_i)$ is the loss at agent $i$ with the decision $\prm$ for a given sample $Z_i$.

\begin{figure*}[t]
  \centering
  \begin{minipage}{.875\textwidth}  
\begin{center}
\resizebox{\linewidth}{!}{\begin{tabular}{c c c c c c}
\toprule
\bfseries Algorithm & \bfseries Reference &\bfseries Data Distribution & \bfseries Mechanism &\bfseries $T_{\sf trans}$ [S-cvx] & \bfseries $T_{\sf trans}$ [Noncvx]
\\
\midrule
DSGD & {\bf This Work} &{\bf Homogeneous}  & Plain & ${\cal O}(\sqrt{n}/\rho)$ &  ${\cal O}(n^{5/3}/\rho^{8/3})$
\\
\midrule
DSGD & \cite{lian2017decentralized}, \cite{pu2021sharp} & Heterogeneous & Plain & ${\cal O}(n/\rho^2)$ & ${\cal O}(n^3/\rho^4)$ 
\\
Gradient Tracking & \cite{C2}, \cite{pu2021distributed}, \cite{xin2021improved} & Heterogeneous & grad.~tracking (GT) & ${\cal O}( n / \sqrt{\rho} ) $ & ${\cal O}( n^3 / \rho^2 )$ 
\\
Exact Diffusion ($D^2$) & \cite{C3}, \cite{yuan2023removing} & Heterogeneous & grad. tracking (GT) & ${\cal O}(n/\rho)$ & ${\cal O}(n^3/\rho^2)$ 
\\
EDAS & \cite{huang2022improving} & Heterogeneous & grad. tracking (GT) & ${\cal O}(n/\rho)$ & {\tt N/A}
\\
DeTAG & \cite{C4} & Heterogeneous & GT + Acc.~Gossip & {\tt N/A} & ${\cal O}(n/\rho)^\dagger$
\\
MG-DSGD & \cite{yuan2022revisiting} & Heterogeneous & Multi-Gossip & ${\cal O}(1/\sqrt{\rho})^\dagger$ & ${\cal O}(n/\rho)^\dagger$
\\
\bottomrule
\end{tabular}}
\end{center}
\end{minipage}
\captionof{table}{{Comparison of transient time analyzed in related works. {\bf [S-cvx]}, {\bf [Noncvx]} respectively denotes the strongly convex, smooth (possibly non-convex) loss case. $^\dagger$DeTAG and MG-DSGD require executing multiple rounds of (accelerated) gossiping \emph{at each iteration}.
}\vspace{-.4cm}}\label{table1}
\end{figure*}

The study of decentralized optimization algorithms for \eqref{eq:opt} can be traced back to the 1980s \cite{tsitsiklis1984problems}. Popular algorithms are based on gradient descent and gossip communication mechanism where information flows along the edges are specified by the graph. Decentralized stochastic gradient descent (\algoname) algorithm was first proposed in \cite{ram2008distributed} for tackling problem (\ref{eq:opt}) using stochastic oracles of the (sub)gradients of $f_i$ when $f_i$ is convex. Improvements to the analysis and/or the algorithms have been developed since then. For example, \cite{bianchi2012convergence} showed that {\algoname} converges to a stationary point asymptotically when $f$ is possibly non-convex. An important class of variants includes the use of gradient tracking: Nedi\'{c} et al. \cite{nedic2017achieving} combined the inexact gradient method with gradient tracking to develop the {DIGing} algorithm, Di Lorenzo et al.~\cite{di2016next} developed the {NEXT} algorithm using gradient tracking for time-varying graph and analyzed its asymptotic behavior in non-convex setting. The recent work \cite{tang2018d} proposed $D^{2}$ algorithm to reduce the large variance of stochastic gradients. In \cite{yuan2018exact}, an {Exact Diffusion} algorithm was designed under exact and stochastic gradient settings. For decentralized directed graph, \cite{assran2019stochastic} studied stochastic gradient push algorithm which blends {\algoname} and {PushSum}.

Despite its simple structure, {\algoname} is efficient in tackling practical machine learning problems. Early works such as \cite{towfic2016excess} found that {\algoname} delivers comparable  asymptotic performance as a centralized algorithm  while assuming that all local functions $f_i(\cdot)$ have the same minimum. For the general case, \cite{pu2021distributed, lian2017can} showed that {\algoname} achieves a \emph{linear speedup} in convergence rate for (strongly-)convex and non-convex objective functions. Their results show that the iterates of {\algoname} converge to an optimal solution (or stationary point) at the same \emph{asymptotic rate} as an equivalent centralized stochastic gradient ({\sf CSGD}) algorithm utilizing the same number of stochastic samples per iteration. In other words, the extra cost of applying a decentralized algorithm vanishes since the asymptotic rate is \emph{network independent}. 

An important extension to the above works is to characterize the \emph{transient time} of decentralized algorithms, i.e., the number of iterations required to achieve the aforementioned rate of {\sf CSGD} that is network independent.
Studying the \emph{transient time} has been the topic of interest recently: \cite{pu2021sharp} showed that for strongly convex loss, the transient time of {\algoname} is ${\cal O}(n/\rho^2)$, where $\rho$ is the spectral gap for the communication graph between agents; \update{also see the recent work \cite{vogels2023beyond} which improved the transient time to ${\cal O}(n/\rho)$ for a restricted class of graph topologies}. 
Other works have studied more advanced algorithms, e.g., \cite{xin2021improved} showed that the transient time of distributed stochastic gradient tracking ({\sf DSGT}) is ${\cal O}(n/\rho^3)$ {which is later on improved to ${\cal O}(n / \sqrt{\rho} )$ in \cite{C2} using a new spectral condition on the mixing matrix}. 
As $n \gg 1$, $\rho \ll 1$ in large and sparse networks, the above results indicate that the convergence rate of decentralized algorithms may still be heavily influenced by the network topology. 
It motivated recent works to develop algorithms focusing on improved transient time, e.g., \cite{yuan2022revisiting} proposed an {\sf MG-DSGD} algorithm which utilizes multiple gossiping steps to obtain an optimal convergence rate for decentralized stochastic optimization and \cite{C4} proposed a related idea, \cite{yuan2023removing, C3} considered the $D^2$/Exact-diffusion method and provide a tightened analysis through studying the algorithm from a primal-dual optimization perspective, also see \cite{huang2022improving} which uses a different proof idea. Table \ref{table1} gives a selective summary of the recent results.

Instead of seeking  further sophistication for decentralized algorithms, this paper focuses on the plain {\algoname} algorithm and develops tighter convergence theories. 
In particular, we study conditions on \eqref{eq:opt} that can be leveraged by {\algoname} to accelerate its convergence. 
We are motivated by the empirical successes of {\algoname} seen in various studies {\cite{ryabinin2021moshpit}} 
and propose a novel convergence analysis from the perspective of \emph{data homogeneity}.
Note that a common scenario is that workers/agents acquire data in an i.i.d.~fashion, as such the local empirical loss $f_i(\cdot)$ tends to be similar to each other \cite{hendrikx2020statistically}. 
In this light, we discover that when data held by agents are (close to) homogeneous such that the Hessians are close, i.e., $\grd^2 f_i(\prm) \approx \grd^2 f_j(\prm)$, for any $i,j$, $\prm\in \RR^d$, the transient time for {\algoname} can be significantly shortened. \update{In such setting, the plain {\algoname} algorithm enjoys comparable performance to sophisticated algorithms such as gradient tracking}.  
Concretely, our contributions are summarized as:
\begin{itemize}
\item We present a tight analysis for the expected convergence rate of {\algoname}. Our results focus on revealing the effects of data homogeneity on the convergence rates; see Theorems \ref{thm2-noncvx} and \ref{thm2}. Our analysis relies on a novel use of the high order Taylor approximation of the local gradient maps and exploits the structure of {\algoname} update. The use of this technique can be of independent interest. 
\item We derive improved bounds on transient time for the plain {\algoname} algorithm, i.e., the minimum iteration number required to achieve network independence rate when considering the case of (near) homogeneous data. In particular, we show that the transient time with smooth (possibly non-convex) objective function is $T_{\sf ncvx} = {\cal O}(n^{\frac{2}{3}} /\rho^{\frac{8}{3}})$, and for strongly convex objective function, it is $T_{\sf cvx}={\cal O}(\sqrt{n}/\rho)$. To the best of our knowledge, this is the first improved transient time results for {\algoname} over the existing bounds of $T_{\sf ncvx} = {\cal O}(n^2/\rho^{4})$, $T_{\sf cvx}={\cal O}(n/\rho^2)$ obtained without data homogeneity.
\item We extend the transient time analysis in {\algoname} to study the convergence rate of decentralized {\td} learning with linear function approximation \cite{doan2019finite}. The latter considers the policy evaluation problem for multi-agent Markov decision process, where the data model falls into the homogeneous data setting. Encouragingly, we show that the algorithm in \cite{doan2019finite} enjoys asymptotic network independence and zero transient time, i.e., an asymptotic convergence rate of ${\cal O}(1 / t)$, where $t$ is the iteration number. This improves over the original analysis in \cite{doan2019finite} that shows an \emph{asymptotic} convergence rate of ${\cal O}(\log t /\rho t )$.
\end{itemize}
{Besides, this work presents new proof technique for exploiting a \emph{second order smoothness} property in the analysis of decentralized optimization, which can be of independent interest.}
Finally, we present simulation examples to verify our theoretical guarantees. 
This paper is significantly extended from its preliminary version in \cite{li2022role}. We have included the analysis for strongly convex losses, distributed {\td} learning, and provided an extensive set of numerical experiments.

We remark that the effects of data homogeneity {and the exploitation of high order smoothness property} have been studied for {several sophisticated} algorithms. In detail, \cite{hendrikx2020statistically} derived improved convergence rates for a preconditioned accelerated gradient method with the help of statistical function similarity; \cite{beznosikov2021distributed} studied distributed convex-concave saddle-point problem and proved a lower bound complexity that accounts for data similarity as well as a multi-gossip update algorithm to achieve the lower bound; \cite{sun2022distributed, tian2022acceleration} studied SONATA/ACC-SONATA that utilized partial majorization-minimization and explored data similarity to boost the convergence rate. Compared to the simple {\algoname} algorithm studied in our paper, these works studied algorithms that require either multi-gossip or gradient tracking that adds a considerable complexity to their implementation. Our work is also the first to take high order smoothness as an analysis tool to obtain \emph{tighter bounds} for decentralized optimization.\vspace{.1cm}

\noindent {\bf Paper Organization}:  
\S \ref{sec:problem} introduces the problem structures and plain-{\sf DSGD} algorithm as well as existing transient time bound. \S \ref{sec:main} presents the motivation and our improved theoretical results. In \S \ref{sec:proof_outline}, we shows the proof outline of basic and accelerated rate. In \S \ref{sec:application}, we extend the transient analysis techniques of {\sf DSGD} to decentralized {\td}. Finally, \S \ref{sec:simulation} shows the numerical experiments to validate our analysis. All proofs can be found in the appendix.

\vspace{+.1cm}
\noindent{\bf Notations}: We use $\norm{\cdot}$ to denote the 2-norm of vectors or the matrix spectral norm depending on the argument and $\norm{\cdot}_F$ as the Frobenius norm of matrices. ${\bm 1}$ is the all-one column vector in $\RR^n$. The subscript-less operator $\EE [\cdot]$ denotes the total expectation taken over all randomnesses.

\section{Problem Statement and Assumptions}\label{sec:problem}
 
Our analysis relies on the following assumptions on the loss functions in \eqref{eq:opt}. 

\begin{Assumption}\label{ass:smooth}
For any $i = 1,...,n$, the (local) objective functions satisfy for any $\prm^\prime, \prm \in \RR^d$ that:
\begin{enumerate}[label=(\alph*), nosep]
\item \label{item:lips} there exists $L \geq 0$ such that: 
\beq \label{eq:lips}
\| \grd f_i( \prm^\prime ) - \grd f_i(\prm )\| \leq L \| \prm^\prime  - \prm \|,
\eeq
\item \label{item:hete} there exists $\varsigma \geq 0$ such that:
\begin{align}\label{eq:hete_grad} 
& \| \grd f( \prm ) - \grd f_i( \prm) \| \leq \varsigma,~\forall~\prm \in \RR^d. 
\end{align}
\end{enumerate}
\end{Assumption}
\noindent Notice that \eqref{eq:lips} in A\ref{ass:smooth} is the standard Lipschitz continuous gradient condition that holds for a number of common problems in control and machine learning; while the constant $\varsigma$ in \eqref{eq:hete_grad} bounds the degree of \emph{heterogeneity} between the local objective functions. If $\varsigma = 0$, then the objective functions will be identical to each other (modulo the additive constants).
For the cases of strongly convex objectives, we also consider
\begin{Assumption} \label{ass:str_cvx}
The objective function is $\mu$-strongly convex such that for any $\prm', \prm \in \RR^d$,
\begin{equation}
    f( \prm ) \geq f( \prm' ) + \pscal{ \grd f( \prm' ) }{ \prm - \prm' } + \frac{\mu}{2} \| \prm' - \prm \|^2. 
\end{equation}
\end{Assumption}

The communication graph $G$ is endowed with a weighted adjacency matrix ${\bm W} \in \RR_+^{n \times n}$ satisfying ${\bm W}_{ij} = 0$ iff $(i,j) \notin E$. Moreover, it satisfies 
\begin{Assumption}\label{ass: graph}
The matrix ${\bm W}$ is doubly stochastic, i.e., ${\bm W}\mathbf{1}={\bm W}^\top\mathbf{1}=\mathbf{1}$. There exists a constant ${\rho} \in(0,1]$ and a projection matrix ${\bm U}$ such that $\boldsymbol{I}-\frac{1}{n} \mathbf{1} \mathbf{1}^{\top}={\bm U} {\bm U}^{\top}$ such that $\left\|{\bm U}^{\top} \bm{W} \bm{U} \right\|_{2} \leq 1-{\rho}$.
\end{Assumption}
\noindent The above assumption is standard. It can be satisfied when $G$ is connected and ${\bm W}$ is constructed using the Metropolis-Hasting weights \cite{aldous1995reversible}. Furthermore, the constant $\rho > 0$ is also known as the spectral gap as it measures the connectivity of ${\bm W}$. For ring graph, $\rho={\cal O}(1/n^2)$.

We focus on the classical decentralized stochastic gradient descent (\algoname) algorithm \cite{ram2008distributed, lian2017can}: 
\begin{tcolorbox}[boxsep=2pt,left=4pt,right=4pt,top=3pt,bottom=3pt]
\underline{\textbf{{\algoname} Algorithm:}} Given the initialization $\{ \prm_i^0 \}_{i=1}^n$. For all $t=0,1,2,\ldots$ and $i=1,\ldots,n$,
\beq \label{eq:dsgd} 
\textstyle \prm_i^{t+1} = \sum_{j=1}^{n} {\bm W}_{ij}\prm_j^t - \gamma_{t+1} \grd \ell_i ( \prm_i^t ; Z_i^{t+1} ),
\eeq
where $\gamma_{t}$ is the step size and $\grd \ell_i (\prm_i^t; Z_{i}^{t+1})$ denotes the stochastic gradient taken with respective to $\prm_i^t$ via agent $i$'s current samples $Z_{i}^{t+1}$. 
\end{tcolorbox}
To simplify notations, for all $t \geq 0$, we define
\beq \textstyle 
\Bprm^t := (1/n) \sum_{i=1}^n \prm_i^t,
\eeq 
as the averaged iterate of {\algoname} at iteration $t$ for the rest of this paper.
Furthermore, the stochastic gradients are unbiased and have bounded variance:
\begin{Assumption}\label{ass:SecOrdMom}
For any $i=1,\ldots, n$ and fixed $\prm \in \RR^d$. Let $\prm^\star \in \argmin_{ \prm \in \RR^d } f( \prm )$. It holds $\EE_{Z_{i}\sim {\sf B}_{i}}[\grd \ell_i (\prm; Z_{i}) ] = \grd f_i (\prm)$, there exists $\sigma_0, \sigma_1 \geq 0$ with
\begin{align}
\EE_{Z_i \sim {\sf B}_i} \big[ \| \grd \ell_i(\prm; Z_i) - \grd f_i( \prm ) \|^2 \big] \leq \sigma_0^2 + \sigma_1^2 \norm{ \prm -\prm^{\star} }^2 . 
\notag
\end{align}
\end{Assumption}
\noindent Under A\ref{ass:smooth}, the above condition can be implied by the commonly used bound on the LHS by $\hat{\sigma}_0^2 + \hat{\sigma}_1^2 \norm{ \grd f_i( \prm ) }^2$ \cite{pu2021sharp}. In particular, we note $\norm{ \grd f_i( \prm ) }^2 \leq 2L^2 \| \prm - \prm^\star \|^2 + 2 \| \grd f_i( \prm^\star) \|^2$. 
When $\sigma_1 > 0$, our condition corresponds to a growth condition on the second order moment bound. 

The {\algoname} algorithm \eqref{eq:dsgd} mimics a centralized SGD ({\sf CSGD}) algorithm by performing two operations simultaneously: $\sum_{j=1}^{n} {\bm W}_{ij}\prm_j^t$ performs a consensus update that aggregates decision variables from neighbors, and $\grd \ell_i( \prm_i^t; Z_i^{t+1} )$ is the local stochastic gradient update. 
Our forthcoming analysis will concentrate on the scenarios with  {\sf (a)}~smooth (possibly non-convex) objective function and {\sf (b)}~smooth \& strongly convex objective function. 
We first describe some recent results on the convergence analysis of {\algoname}.

\subsection{Linear Speedup and Transient Time of {\algoname}}
As the {\algoname} algorithm mimics {\sf CSGD} by design, an intriguing question is whether their sampling complexities are comparable. Particularly, as a total of $n$ samples are drawn at each iteration of {\algoname}, one may hope that {\algoname} can achieve the same sampling complexity as {\sf CSGD} \emph{that draws a batch of $n$ samples per iteration}. 

Such phenomena is known as \emph{linear speedup} in the literature, see \cite{lian2017can} for the smooth (possibly non-convex) setting and \cite{pu2021distributed} for the strongly convex setting. 
We observe: 
\begin{Corollary}[Smooth Objective Function]\label{cor:1}
Let $T \geq 1$, set $\gamma_{t+1} = (1 / \sqrt{T}) \sqrt{2 {\sf D} n / (L \sigma^2)}$ and ${\sf T}$ be a random variable (r.v.) chosen uniformly from $\{0,\ldots, T-1\}$. Assume A\ref{ass:smooth}, \ref{ass: graph}, \ref{ass:SecOrdMom}[$\sigma_0 = \sigma, \sigma_1 = 0$]. When $T$ is sufficiently large, we have
    \begin{align}
    \hspace{-.1cm}\EE\left[ \norm{\grd f(\Bprm^{\sf T})}^2 \right] & \leq \sqrt{\frac{32 {\sf D} L \sigma^2 }{n T}} + {\cal O}\left( \frac{n L^2(\varsigma^2+\sigma^2)}{\rho^2 T \sigma^2 } \right). \label{eq:corNor}
    \end{align}
where ${\sf D} := f( \Bprm^0 ) - f^\star$.
\end{Corollary}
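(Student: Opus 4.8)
The plan is to run the standard descent analysis on the \emph{averaged} iterate $\Bprm^t$, whose dynamics reduce to a perturbed centralized SGD recursion thanks to the double stochasticity of $\bm{W}$ in A\ref{ass: graph}. Averaging the update \eqref{eq:dsgd} over $i$ leaves the mean untouched by mixing, so that $\Bprm^{t+1} = \Bprm^t - \gamma_{t+1}\,\bar{g}^t$ with $\bar{g}^t := \frac{1}{n}\sum_{i=1}^n \grd \ell_i(\prm_i^t; Z_i^{t+1})$. First I would apply the $L$-smoothness descent lemma (A\ref{ass:smooth}\ref{item:lips}) to $f$ along this recursion and take expectations. Writing $\bar{\grd}^t := \frac1n\sum_i \grd f_i(\prm_i^t)$ and letting $\mathcal{F}_t$ be the history up to iteration $t$, unbiasedness and cross-agent independence of the noise (A\ref{ass:SecOrdMom} with $\sigma_1=0$) give $\EE[\bar g^t \mid \mathcal{F}_t]=\bar\grd^t$ and $\EE[\|\bar g^t\|^2\mid \mathcal{F}_t]=\|\bar\grd^t\|^2+\sigma^2/n$, where the $\sigma^2/n$ is the variance-reduction gain from averaging $n$ independent gradients. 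Using the polarization identity on $\pscal{\grd f(\Bprm^t)}{\bar\grd^t}$ together with the Lipschitz bound $\|\grd f(\Bprm^t)-\bar\grd^t\|^2\le \frac{L^2}{n}\sum_i\|\prm_i^t-\Bprm^t\|^2$, the inequality reduces, whenever $\gamma_{t+1}L\le 1$, to
\begin{equation}
\EE[f(\Bprm^{t+1})]\le \EE[f(\Bprm^t)] - \frac{\gamma_{t+1}}{2}\EE\|\grd f(\Bprm^t)\|^2 + \frac{\gamma_{t+1}L^2}{2}\,\EE[\Omega_t] + \frac{L\gamma_{t+1}^2\sigma^2}{2n},
\end{equation}
where $\Omega_t := \frac1n\sum_i \EE\|\prm_i^t-\Bprm^t\|^2$ is the \emph{consensus error}.

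The crux is to control $\sum_t \Omega_t$ through the spectral gap. Stacking the iterates into $\bm{X}^t$ (rows $\prm_i^t$) and the stochastic gradients into $\bm{G}^t$, double stochasticity gives the deviation recursion $\bm{U}\bm{U}^\top\bm{X}^{t+1} = \bm{U}\bm{U}^\top\bm{W}(\bm{U}\bm{U}^\top\bm{X}^t) - \gamma_{t+1}\bm{U}\bm{U}^\top\bm{G}^t$, and A\ref{ass: graph} yields the contraction $\|\bm{U}^\top\bm{W}\bm{U}\|\le 1-\rho$. Applying Young's inequality with parameter $\Theta(\rho)$ turns this into
\begin{equation}
\EE\|\bm{U}\bm{U}^\top\bm{X}^{t+1}\|_F^2 \le \Big(1-\tfrac{\rho}{2}\Big)\EE\|\bm{U}\bm{U}^\top\bm{X}^t\|_F^2 + \frac{2\gamma_{t+1}^2}{\rho}\,\EE\|\bm{U}\bm{U}^\top\bm{G}^t\|_F^2.
\end{equation}
I would then bound the driving term by splitting $\bm{G}^t$ into its mean and noise: the noise contributes $\le n\sigma^2$ by A\ref{ass:SecOrdMom}, while the mean part is handled by the heterogeneity bound A\ref{ass:smooth}\ref{item:hete} and Lipschitzness, giving $\EE\|\bm{U}\bm{U}^\top\bm{G}^t\|_F^2 \lesssim n\big(\sigma^2+\varsigma^2+\EE\|\grd f(\Bprm^t)\|^2\big) + L^2\,\EE\|\bm{U}\bm{U}^\top\bm{X}^t\|_F^2$. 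For $\gamma_{t+1}$ small enough that $\gamma_{t+1}^2 L^2/\rho\le\rho/8$, the $\bm{X}^t$ term is absorbed into the contraction, leaving a net factor $1-\rho/4$; unrolling the geometric recursion and summing then yields, for constant step $\gamma$,
\begin{equation}
\frac1T\sum_{t=0}^{T-1}\Omega_t \lesssim \frac{\gamma^2}{\rho^2}\Big(\sigma^2+\varsigma^2+\frac1T\sum_{t=0}^{T-1}\EE\|\grd f(\Bprm^t)\|^2\Big).
\end{equation}

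Finally I would combine the two estimates. Telescoping the descent inequality over $t=0,\dots,T-1$, dividing by $\gamma T/2$, and using that $\EE\|\grd f(\Bprm^{\sf T})\|^2=\frac1T\sum_t \EE\|\grd f(\Bprm^t)\|^2$ (since ${\sf T}$ is uniform) gives $\frac1T\sum_t\EE\|\grd f(\Bprm^t)\|^2 \le \frac{2{\sf D}}{\gamma T}+\frac{L\gamma\sigma^2}{n}+L^2\cdot\frac1T\sum_t\Omega_t$. Substituting the consensus bound reinjects a $\Theta(\gamma^2 L^2/\rho^2)$ multiple of $\frac1T\sum_t\EE\|\grd f(\Bprm^t)\|^2$ on the right. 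The main obstacle, and the source of the ``$T$ sufficiently large'' hypothesis, is closing this self-referential coupling: for large $T$ the choice $\gamma\propto 1/\sqrt T$ forces $\gamma^2 L^2/\rho^2\le 1/2$, so this term can be moved to the left at the cost of a factor $2$ (which is precisely what upgrades the leading constant from $\sqrt{8{\sf D}L\sigma^2/(nT)}$ to $\sqrt{32{\sf D}L\sigma^2/(nT)}$). What remains is $\frac{2{\sf D}}{\gamma T}+\frac{L\gamma\sigma^2}{n}+{\cal O}\big(\frac{L^2\gamma^2}{\rho^2}(\sigma^2+\varsigma^2)\big)$; plugging in $\gamma=\frac{1}{\sqrt T}\sqrt{2{\sf D}n/(L\sigma^2)}$ balances the first two terms into the network-independent rate $\Theta(\sqrt{{\sf D}L\sigma^2/(nT)})$, while the third becomes ${\cal O}\big(nL^2(\sigma^2+\varsigma^2)/(\rho^2 T\sigma^2)\big)$, which recovers \eqref{eq:corNor}.
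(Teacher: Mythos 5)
Your proposal is correct and follows the same architecture as the paper's proof: a descent inequality for $f$ along the averaged recursion \eqref{eq:avg_it}, a $\rho$-contraction for the consensus error driven by $\|{\bm U}{\bm U}^\top \Tgrd F^t\|_F^2$, and a combination of the two with the step size $\gamma \propto \sqrt{n/T}$ (the paper packages the combination as the Lyapunov function ${\tt V}^t = \EE[f(\Bprm^t)-f^\star + \tfrac{4L^2}{\rho n}\gamma_t\|\CSE{t}\|_F^2]$, which is equivalent to your telescoping). The one genuine difference is in the consensus driving term: the paper bounds $\sum_i\|\grd f_i(\prm_i^t)-\tfrac1n\sum_j\grd f_j(\prm_j^t)\|^2$ by centering each gradient at $\grd f(\prm_i^t)$, obtaining $2n\varsigma^2 + 4L^2\|\CSE{t}\|_F^2$ with \emph{no} $\|\grd f(\Bprm^t)\|^2$ contribution, so the consensus recursion decouples from the optimization progress and no absorption step is needed. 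Your bound retains an extra $n\,\EE\|\grd f(\Bprm^t)\|^2$ term, which is a valid but looser estimate, and you correctly close the resulting self-referential inequality by requiring $\gamma^2L^2/\rho^2 \lesssim 1$ and paying a constant factor; this is consistent with the step-size restriction $\gamma_t \leq \rho/(4L)$ that the paper imposes anyway, and both routes yield the same leading term $\sqrt{32{\sf D}L\sigma^2/(nT)}$ and the same ${\cal O}(nL(\sigma^2+\varsigma^2)/(\rho^2T\sigma^2))$ remainder (plus an initial consensus term ${\cal O}(1/T)$ that the corollary's ${\cal O}(\cdot)$ hides).
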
 
\begin{Corollary}[Strongly Convex Objective Function] \label{cor:2}
    For any $t \geq 1$, set $\gamma_{t}={a_0}/{(a_1+t)}$, $a_0, a_1 > 0$. Assume A\ref{ass:smooth},\ref{ass:str_cvx},\ref{ass: graph},\ref{ass:SecOrdMom}[$\sigma_0 = \sigma_1 = \sigma$], then for any $t \geq 0$
    \beq \label{eq:corStr}
        \EE \left[\normtxt{\Bprm^t -\prm^{\star}}^2\right] \leq 
         {\frac{\sigma^2}{n\mu} \gamma_{t} }
        + {\cal O}\left( \frac{(\sigma^2 + \varsigma^2) L^2 }{\mu^2 \rho^2} \gamma_{t}^2 \right).
    \eeq 
\end{Corollary}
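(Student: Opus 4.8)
The plan is to track two coupled error quantities and show that the fast $\frac{\sigma^2}{n\mu}\gamma_t$ term arises from a centralized-like descent on the network average, while the $\gamma_t^2/\rho^2$ correction comes from the consensus error. Define the averaged-iterate error $V^t := \EE[\|\Bprm^t - \prm^\star\|^2]$ and the consensus error $\Omega^t := \EE[\sum_{i=1}^n \|\prm_i^t - \Bprm^t\|^2]$. Since ${\bm W}$ is doubly stochastic, averaging \eqref{eq:dsgd} gives $\Bprm^{t+1} = \Bprm^t - \frac{\gamma_{t+1}}{n}\sum_i \grd\ell_i(\prm_i^t; Z_i^{t+1})$, an inexact {\sf CSGD} step whose gradient is evaluated at the dispersed local iterates rather than at $\Bprm^t$. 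First I would derive the descent recursion for $V^t$: expanding $\|\Bprm^{t+1}-\prm^\star\|^2$ and taking conditional expectation, unbiasedness (A\ref{ass:SecOrdMom}) removes the noise from the cross term, leaving $-2\gamma_{t+1}\pscal{\frac1n\sum_i \grd f_i(\prm_i^t)}{\Bprm^t - \prm^\star}$. I split $\frac1n\sum_i\grd f_i(\prm_i^t) = \grd f(\Bprm^t) + (\textrm{drift})$; strong convexity (A\ref{ass:str_cvx}) yields $\pscal{\grd f(\Bprm^t)}{\Bprm^t-\prm^\star}\ge \mu V^t$, while A\ref{ass:smooth}\ref{item:lips} bounds the drift by $\frac{L^2}{n}\Omega^t$ after a Young's step that consumes half of the strong-convexity contraction, leaving an effective factor $(1-\mu\gamma_{t+1})$. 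For the second-order term, independence of the agent noises makes the variance of the averaged gradient scale as $\frac1n$; the growth bound A\ref{ass:SecOrdMom}[$\sigma_0=\sigma_1=\sigma$] then produces the crucial \emph{linear-speedup} term $\frac{\gamma_{t+1}^2\sigma^2}{n}$ together with self-referential terms $\propto \gamma_{t+1}^2(V^t + \Omega^t/n)$. Collecting, $V^{t+1} \leq (1-\mu\gamma_{t+1}+\mathcal{O}(\gamma_{t+1}^2))V^t + \frac{\gamma_{t+1}^2\sigma^2}{n} + \frac{\gamma_{t+1}L^2}{\mu n}\Omega^t + (\textrm{h.o.t.})$.

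Next I would bound the consensus error. Writing the update in stacked form and using the projection ${\bm U}{\bm U}^\top = {\bm I}-\frac1n\mathbf{1}\mathbf{1}^\top$ together with the contraction $\|{\bm U}^\top{\bm W}{\bm U}\|\le 1-\rho$ from A\ref{ass: graph}, a Young's inequality that separates the deterministic and stochastic parts yields $\Omega^{t+1}\le (1-\rho)\Omega^t + \frac{\gamma_{t+1}^2}{\rho}\big[\,n(\sigma^2+\varsigma^2) + c(\sigma^2+L^2)\Omega^t + c\,n\sigma^2 V^t\,\big]$, where the heterogeneity $\varsigma^2$ (A\ref{ass:smooth}\ref{item:hete}) and the stochastic variance $\sigma^2$ are the source terms driving disagreement and the extra $1/\rho$ is the penalty from the Young step. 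For $\gamma_t$ small enough (i.e., $a_1$ large) the $\Omega^t$ term on the right is absorbed into the left, and unrolling the geometric recursion of rate $1-\rho$ gives $\Omega^t \lesssim \frac{\gamma_t^2}{\rho^2}\big(n(\sigma^2+\varsigma^2) + n\sigma^2 V^t\big)$; that is, the consensus error is $\mathcal{O}(\gamma_t^2)$ with the characteristic $1/\rho^2$ dependence. Here I would use that $\gamma_t=a_0/(a_1+t)$ is slowly varying, so $\gamma_{t+1}\approx\gamma_t$ and the geometric sum is controlled term-by-term.

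Finally I would combine the two. Substituting the $\mathcal{O}(\gamma_t^2)$ bound on $\Omega^t/n$ into the $V^t$-recursion turns the coupling term $\frac{\gamma_{t+1}L^2}{\mu n}\Omega^t$ into a $\gamma_t^3$ contribution, so the effective inhomogeneous term is $b_t = \frac{\gamma_{t+1}^2\sigma^2}{n} + \mathcal{O}\!\big(\frac{\gamma_{t+1}^3 L^2(\sigma^2+\varsigma^2)}{\mu\rho^2}\big)$, and (choosing $a_1$ large enough that the $\mathcal{O}(\gamma^2)$ correction to the contraction is dominated, effectively replacing $\mu$ by $\mu/2$) I obtain the scalar recursion $V^{t+1}\le(1-\mu\gamma_{t+1})V^t + b_t$. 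I would solve this by induction on the two-term ansatz $V^t \le \frac{\sigma^2}{n\mu}\gamma_t + C\frac{(\sigma^2+\varsigma^2)L^2}{\mu^2\rho^2}\gamma_t^2$, matching each power of $\gamma_t$ separately and using $\gamma_t-\gamma_{t+1} = \mathcal{O}(\gamma_t^2)$ to close the step. Dividing $b_t$ by the effective rate $\mu\gamma_t$ is exactly what turns $\frac{\gamma^2\sigma^2}{n}$ into $\frac{\gamma\sigma^2}{n\mu}$ and the $\gamma^3$ network term into the stated $\gamma^2$ term.

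The main obstacle I expect is the \emph{self-referential coupling} created by the growth condition $\sigma_1=\sigma>0$: because the variance bound in A\ref{ass:SecOrdMom} carries the factor $\|\prm-\prm^\star\|^2$, both $V^t$ and $\Omega^t$ reappear on the right-hand sides of \emph{both} recursions, so one cannot bound $\Omega^t$ by a constant independent of the iterates. Resolving this cleanly requires either a joint (rather than sequential) treatment of the coupled $(V^t,\Omega^t)$ system, or a small enough step size / large enough $a_1$ to absorb the $V^t$- and $\Omega^t$-dependent feedback, all while preserving the sharp $1/n$ and $1/\rho^2$ scalings and the two-timescale separation that keeps the network term at $\mathcal{O}(\gamma_t^2)$.
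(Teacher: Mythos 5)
Your proposal is correct and follows essentially the same route as the paper's proof: the same descent lemma on $\EE\normtxt{\Bprm^t-\prm^\star}^2$ (strong convexity for the contraction, Young's inequality to pay $\frac{L^2}{n\mu}\gamma_{t+1}\normtxt{\CSE{t}}_F^2$ for the drift, and the $\frac{\gamma_{t+1}^2\sigma^2}{n}$ linear-speedup variance term), the same consensus-error recursion contracting at rate $1-\rho/2$ with source $\frac{2n\gamma_{t+1}^2}{\rho}[(\sigma^2+\varsigma^2)+2\sigma^2\normtxt{\Tprm^t}^2]$, and the same $1/n$, $1/\rho^2$ scalings in the conclusion. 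The one mechanical difference is that the paper resolves the self-referential coupling you flag at the end not by unrolling $\Omega^t$ and inducting on a two-term ansatz, but by the joint treatment you mention as an alternative: it forms the weighted Lyapunov function ${\tt U}_t = \EE[\normtxt{\Tprm^t}^2 + \frac{4L^2}{n\mu\rho}\gamma_t\normtxt{\CSE{t}}_F^2]$, absorbs the iterate-dependent feedback into the $(1-\frac{\mu}{4}\gamma_{t+1})$ contraction via the step-size condition, and solves the single scalar recursion with the auxiliary summation lemma (Lemma \ref{lem:aux}).
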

\noindent 
We remark that the use of r.v.~${\sf T}$ in Corollary \ref{cor:1} is a standard setting for analysis of non-convex stochastic optimization, e.g., \cite{ghadimi2013stochastic}. 
For Corollary \ref{cor:2}, we used the step size rule with $\gamma_{t} = {\cal O}(1/t)$ which is also common. See Appendix \ref{app:ncvx1} and \ref{app:cvx1} for the proof of Corollary \ref{cor:1} and \ref{cor:2}, respectively.

Under A\ref{ass:smooth}, \ref{ass:SecOrdMom}, {\sf CSGD} for $\min_{\prm} (1/n) \sum_{i=1}^n f_i(\prm)$ with minibatch size $n$ generates iterates satisfying 
\beq \label{eq:csgdNCvx}
\EE\left[ \norm{\grd f(\prm^{\sf T})}^2 \right] \leq 
\sqrt{\frac{8 {\sf D} L \sigma^2 }{n T}}
=: {\sf UB}_{\sf ncvx}(T,L,{\sf D},\sigma^2)
\eeq 
for smooth (possibly non-convex) optimization \cite{ghadimi2013stochastic} with the step size $\gamma_t = \sqrt{2 n {\sf D} / ( T L \sigma^2 )}$. Moreover,
\beq \label{eq:csgdSCvx}
\EE \left[\normtxt{\prm^t -\prm^{\star}}^2 \right] \leq \frac{\sigma^2}{n \mu} \gamma_{t}=: {\sf UB}_{\sf cvx}(t,\mu,{\sf D},\sigma^2)
\eeq 
\footnote{We have hidden the dependence on $L$ in the constant $\sigma^2$, cf.~A\ref{ass:SecOrdMom}.}for strongly convex optimization \cite{gower2019sgd} with the step size $\gamma_t = a_0 / (a_1+t)$ as in Corollary~\ref{cor:2}.
\update{These bounds match the lower bounds for optimal (centralized)    stochastic gradient algorithms under their respective settings \cite{arjevani2022lower}, \cite{agarwal2012information}.}

Moreover, we observe that ${\sf UB}_{\sf ncvx}, {\sf UB}_{\sf cvx}$ match the dominant terms in \eqref{eq:corNor}, \eqref{eq:corStr} as $t \to \infty$ and {\sf DSGD} \emph{asymptotically} matches the performance of {\sf CSGD} which similarly takes $n$ samples per iteration. Such phenomena is also known as \emph{network independence} since the dominant term is not affected by the network size or connectivity \cite{network_indenp}. 

The central theme of this paper is to study the \emph{transient time} of a decentralized algorithm, defined as the minimum iteration number in which the matching term in {\sf CSGD}, i.e., ${\cal O}( 1/\sqrt{T} )$ in smooth setting, or ${\cal O}( \gamma_t )$ in strongly convex setting, dominates over the remainder terms. Formally, 
\begin{Def}\label{def:ntran}
Under A\ref{ass:smooth}, A\ref{ass: graph}, A\ref{ass:SecOrdMom}[$\sigma_0 = \sigma,  \sigma_1 = 0$], we define the transient time of a decentralized algorithm as
\beq
    T_{\sf ncvx} \!=\! \inf_{T \geq 1} \left\{ T \!:\! \EE[ \normtxt{\grd f(\Bprm^{\sf T})}^2 ] \!\leq\! {\rm c} \, {\sf UB}_{\sf ncvx}(T,L,{\sf D},\sigma^2) \right\} ,
\eeq
where $\Bprm^t := (1/n)\sum_{i=1}^n \prm_i^t$ is the averaged iterate at the $t$th iteration, and ${\sf T}$ is a r.v.~uniformly selected from $\{1,\ldots,T\}$, ${\rm c} \geq 1$ is a constant independent of $T, L , {\sf D}, \sigma$.
\end{Def} 
\begin{Def}\label{def:ctran}
Under A\ref{ass:smooth}, A\ref{ass:str_cvx}, A\ref{ass: graph}, A\ref{ass:SecOrdMom}[$\sigma_0 = \sigma_1 = \sigma$], we define the transient time of a decentralized algorithm as
\beq
T_{\sf cvx} = \inf_{t \geq 1} \left\{ t : \EE [\normtxt{\Bprm^t -\prm^{\star}}^2 ] \leq {\rm c} \, {\sf UB}_{\sf cvx}(t,\mu, {\sf D},\sigma^2)  \right\} ,
\eeq 
where {$\Bprm^t := (1/n)\sum_{i=1}^n \prm_i^t$ is the averaged iterate at the $t$th iteration for the algorithm} and ${\rm c} \geq 1$ is a constant independent of $T, \mu , {\sf D}, \sigma$.
\end{Def}

Corollaries \ref{cor:1} and \ref{cor:2} show that the transient times of {\sf DSGD} via Definitions \ref{def:ntran} and \ref{def:ctran} can be estimated as: 
\beq \label{eq:plain-transient}
\begin{aligned}
 T_{\sf ncvx} & = {\cal O}\left( \frac{n^3 (\sigma^2+\varsigma^2)^2}{\rho^4 \sigma^2 } L^3 \right), \\
 T_{\sf cvx} & = {\cal O}\left(\frac{n (1+\varsigma^2/\sigma^2)}{\rho^2} L^2 \right).
\end{aligned}
\eeq 
In both cases, the transient times grow as ${\cal O}( n^3 / \rho^4 )$ or ${\cal O}( n / \rho^2 )$. When $n$ is large, it may take a long time for {\algoname} to achieve the linear speedup. As discussed in the Introduction, the above observation has motivated prior works \cite{huang2022improving, xin2021improved} to consider sophisticated decentralized algorithm with improved transient time bounds. 

In the sequel, we describe a refined convergence analysis for the {\algoname} algorithm and \emph{tighten} the bounds in Corollaries~\ref{cor:1}, \ref{cor:2}. Our result reveals the role of data homogeneity in which {\algoname} can achieve a fast transient time that is comparable to the state-of-the-art decentralized algorithms.

\begin{remark}
It is worthwhile to mention that our definitions of transient time are based on the network average iterate $\Bprm^t$ similar to \cite{lian2017can}, instead of the local iterate $\prm_i^t$ as in \cite{pu2021sharp}. We remark that such discrepancy can be overcome by running ${\cal O}( \rho^{-1} \log( \epsilon^{-1} ) )$ steps of average consensus (see \cite[Lemma 4]{nedic2009distributed}) to enforce that $\| \prm_i^t - \Bprm^t \|^2 \leq \epsilon$ for any $i \in [n]$, where $\epsilon>0$ is the desired optimality/stationarity gap. 
\end{remark}

\section{Main Results}\label{sec:main}
We first provide an illustrating example that shows {data homogeneity} can strongly influence the transient time of {\algoname}. Consider the following quadratic loss function:
\beq \label{eq:quadratic}
f_i( \prm ) = \EE\big[  (1/2) \prm^\top \widetilde{\bm A}_i \prm + \prm^\top \widetilde{\bm b}_{i} \big],
\eeq 
where the expectation is taken w.r.t.~$\widetilde{\bm A}_i, \widetilde{\bm b}_i$ such that $\EE[ \widetilde{\bm b}_{i} ] = {\bm b}_i$ and $\EE[ \widetilde{\bm A}_{i} ] = {\bm A}_i$ is a symmetric positive definite matrix. 

Assume homogeneous data such that there exists a common positive definite matrix ${\bm A}$ shared among the agents with ${\bm A}_i = {\bm A}$. Consider the following stochastic gradient map 
\beq\label{eq:stoc_grad_quad}
    \grd \ell(\prm; Z_{i}) = \widetilde{\bm A}_i \prm + \widetilde{\bm b}_i,
\eeq
where $Z_{i} = (\widetilde{\bm A}_i, \widetilde{\bm b}_i)$ are independent random variable. The variances are assumed as bounded with $\EE[\normtxt{\widetilde{\bm A}_i - {\bm A}}^2] \leq \sigma^2$, $\EE[ \| \widetilde{\bm b}_i - {\bm b}_{i} \|^2 ] \leq \sigma^2$.
This implies 
\begin{align*}
    \EE[ \| \grd \ell ( \prm_i^t; Z_{i} ) - \grd f_i (\prm_i^t) \|^2 ] \lesssim \sigma^2(1+\norm{\prm_i^t - \thstr}^2),
\end{align*} 
for any $\prm_i^t \in \RR^d$ and $\lesssim$ hides the numerical constants that are independent of $\sigma^2$.

With \eqref{eq:stoc_grad_quad}, the {\sf DSGD} algorithm reads:
\begin{align} 
& \textstyle \prm_i^{t+1} = \sum_{j=1}^n {\bm W}_{ij} \prm_i^t - \gamma_{t+1} \big( \widetilde{\bm A}_i \prm_i^t  + \widetilde{\bm b}_i \big). \label{eq:dsgd_quad} 
\end{align}
Using the fact that $\sum_{i=1}^n {\bm W}_{ij} = 1$, the averaged iterates are updated as
\beq \label{eq:linearkey}
\begin{aligned} 
\Bprm^{t+1} & \textstyle  = \Bprm^t - {\gamma_{t+1}} \big(
\sum_{i=1}^{n} \widetilde{\bm A}_i \prm_i^t/n  + \sum_{i=1}^n \widetilde{\bm b}_i  /n \big). 
\end{aligned}
\eeq 
Importantly, due to $\EE[ \widetilde{\bm A}_i ] = {\bm A}$, the last term is an \emph{unbiased estimate} of the gradient of $f(\prm) = \frac{1}{n}\sum_{i=1}^n f_i(\prm)$ with 
\[ 
\textstyle \EE[ {n}^{-1} \sum_{i=1}^n \big( \widetilde{\bm A}_i \prm_i^t  + \widetilde{\bm b}_i \big) ] = \grd f( \Bprm^t ).
\]
The variance of the gradient estimator is bounded by
{\small 
\beqq 
    \EE\Bigg[ \Big\| \frac{1}{n} \sum_{i=1}^n (\widetilde{\bm A}_i\prm_i^t + \widetilde{\bm b}_i )-  \grd f( \Bprm^t ) \Big\|^2 \Bigg]
    \!\lesssim\! \frac{\sigma^2}{n}\left[1+\sum_{i=1}^{n}\frac{ \normtxt{\prm_i^t-\thstr}^2 }{n} \right],
\eeqq}where $\lesssim$ omits numerical constant for the upper bound. In comparison, the {\sf CSGD} algorithm applied to $f(\prm)$ with a batch size $n$ admits a variance of ${\cal O}( \frac{\sigma^2}{n} (1+\| \Bprm^t - \thstr\|^2) )$. We observe that the only difference between \eqref{eq:linearkey} and the {\sf CSGD} algorithm lies in the extra error term in the \emph{variance} bound due to the consensus error $\sum_{i=1}^n \| \prm_i^t - \Bprm^t \|^2$. We anticipate the transient time for {\algoname} to be much less than \eqref{eq:plain-transient}.

We generalize the above example to non-quadratic objective functions by considering the following set of additional conditions.
First, notice that one of the keys to accelerating the transient time lies in the similarity between the \emph{Hessians} of the objective functions. To this end, we impose: 
\begin{Assumption}\label{ass:hete2}
There exists $\varsigma_H \geq 0$ such that for any $i=1,\ldots, n$,
\begin{align}
& \| \grd^2 f( \prm ) - \grd^2 f_i( \prm) \| \leq \varsigma_H,~\forall~\prm \in \RR^d. \label{eq:hete_hess}
\end{align}
\end{Assumption}
\noindent The constant $\varsigma_H$ quantifies the similarity between the Hessians of the component function $f_i(\prm)$. \update{While both A\ref{ass:smooth}-\ref{item:hete} and A\ref{ass:hete2} hold under the setting of homogeneous data, we note that $\varsigma = 0$ in A\ref{ass:smooth}-\ref{item:hete} implies $\varsigma_H = 0$ in A\ref{ass:hete2}, but not vice versa.}
{Furthermore, as shown in \cite{hendrikx2020statistically}, for empirical risk minimization (ERM) problems with $m$ i.i.d.~data samples split across the agents, one has $\varsigma_H = {\cal O}( 1 / \sqrt{m} )$.}

We also require the following technical assumptions:
\begin{Assumption} \label{ass:Hsmooth}
There exists $L_H \geq 0$ such that for any $i=1,\ldots, n$,
\beq
\| \grd^2 f_i( \prm' ) - \grd^2 f_i( \prm ) \| \leq L_{H} \| \prm' - \prm \|,~\forall~\prm, \prm' \in \RR^d.
\eeq
\end{Assumption} 
\begin{Assumption}\label{ass:SecOrdMom2}
For any $i=1,\ldots, n$ and fixed $\prm \in \RR^d$. Let $\prm^\star \in \argmin_{ \prm \in \RR^d } f( \prm )$. There exists $\bar\sigma_0, \bar\sigma_1 \geq 0$ with
\begin{align}
\EE_{Z_i\sim {\sf B}_{i}} [ \| \grd \ell( \prm; Z_{i}) \!-\! \grd f_i( \prm ) \|^4 ]  \leq \bar\sigma_0^4 + \bar\sigma_1^4 \norm{ \prm - \prm^\star }^4,\notag
\end{align}
and it holds that $\EE_{Z_i \sim {\sf B}_i} [ \grd \ell_i( \prm; Z_i ) ] = \grd f_i( \prm )$.
\end{Assumption}
\noindent For quadratic functions, A\ref{ass:Hsmooth} is satisfied with $L_H = 0$; {the assumption is further satisfied with common loss functions such as the logistics loss, see \cite{wai2020accelerating}}. Meanwhile, A\ref{ass:SecOrdMom2} controls the $4$th order moment bound on the variance akin to A\ref{ass:SecOrdMom}. In fact, both A\ref{ass:SecOrdMom}, A\ref{ass:SecOrdMom2} are consequences of an almost sure bound on the gradient noise with the growth condition \cite{bottou2018optimization} $\sup_{z \in {\rm supp}( {\sf B}_i )} \| \grd \ell( \prm; z) \!-\! \grd f_i( \prm ) \| \leq \tilde{\sigma}_0 + \tilde{\sigma}_1 \| \grd f_i (\prm) \|$. The latter holds in the finite-sum optimization setting or in learning problems with bounded data.
\subsection{Smooth (possibly non-convex) case}
We will first present the general convergence rate analyzed under the additional conditions (A\ref{ass:hete2}--A\ref{ass:SecOrdMom2}) \update{emphasizing on the role of data homogeneity}, then we will discuss its implication on the transient time analysis. 
\begin{theorem}\label{thm2-noncvx}
Under A\ref{ass:smooth},\ref{ass: graph},\ref{ass:SecOrdMom}[$\sigma_0 = \sigma, \sigma_1 = 0$],\ref{ass:hete2},\ref{ass:Hsmooth},\ref{ass:SecOrdMom2}[$\bar\sigma_0 = \bar\sigma, \bar\sigma_1 = 0$], suppose that step size satisfies $\sup_{t\geq 1}\gamma_{t} \!\leq\!  \frac{\rho}{10L\sqrt[4]{n}}$. 
Let {${\sf D} := f( \Bprm^0 ) - f^\star$}. For any $T \geq 1$, it holds 
\begin{align*}
    & \EE \left[ \sum_{t=0}^{T-1}\gamma_{t+1}\norm{\grd f(\Bprm^t)}^2 \right] \leq 4 {\sf D} + \frac{2L\sigma^2}{n} \sum_{t=0}^{T-1} \gamma_{t+1}^2 \\
    & + \frac{ {432} L_H^2}{\rho^4}(\bar\sigma^4+4\varsigma^4) \sum_{t=0}^{T-1}\gamma_{t+1}^5 + \frac{ {64} \varsigma_H^2 (\sigma^2 + \varsigma^2) }{\rho^2} \sum_{t=0}^{T-1} \gamma_{t+1}^3 \\
    & + \frac{ 4 \gamma_{1} }{ \rho n^2 } \left( 4 n\varsigma_H^2 \| \CSE{0} \|_F^2 + L_H^2 \| \CSE{0} \|_F^4 \right) .
 \end{align*}
\end{theorem}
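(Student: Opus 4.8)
The plan is to run the standard descent-lemma argument on the network-average iterate $\Bprm^t$, but to control the consensus-induced bias through a second-order Taylor expansion rather than the crude gradient-Lipschitz bound. Since $\bm W$ is doubly stochastic, the average update is $\Bprm^{t+1} = \Bprm^t - \gamma_{t+1}\,\bar g^t$ with $\bar g^t := \frac1n\sum_{i=1}^n \grd\ell_i(\prm_i^t;Z_i^{t+1})$ and conditional mean $\frac1n\sum_i \grd f_i(\prm_i^t)$. Applying $L$-smoothness of $f$ (A\ref{ass:smooth}a) and splitting $\frac1n\sum_i \grd f_i(\prm_i^t) = \grd f(\Bprm^t) + \Delta^t$, a Young inequality on the cross term leaves $-\tfrac12\gamma_{t+1}\norm{\grd f(\Bprm^t)}^2$, a variance contribution $\tfrac{L\sigma^2}{2n}\gamma_{t+1}^2$ from the $n$-fold average of independent noises (A\ref{ass:SecOrdMom}, $\sigma_1=0$), a quadratic penalty $L\gamma_{t+1}^2\norm{\grd f(\Bprm^t)}^2$ from the mean-squared term, and a bias penalty of order $\gamma_{t+1}\norm{\Delta^t}^2$. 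The step-size cap $\gamma_t\le\rho/(10Ln^{1/4})$ gives $L\gamma_{t+1}\le\tfrac14$, so the quadratic penalty is absorbed and a net $-\tfrac14\gamma_{t+1}\norm{\grd f(\Bprm^t)}^2$ survives; telescoping and multiplying through by $4$ produces exactly the $4{\sf D}$ and $\frac{2L\sigma^2}{n}\sum\gamma_{t+1}^2$ terms, reducing everything to a bound on $\EE[\norm{\Delta^t}^2]$.

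The key — and novel — step is the bias bound. Writing the exact expansion $\grd f_i(\prm_i^t) = \grd f_i(\Bprm^t) + \grd^2 f_i(\Bprm^t)(\prm_i^t-\Bprm^t) + R_i^t$ with $\norm{R_i^t}\le\tfrac{L_H}{2}\norm{\prm_i^t-\Bprm^t}^2$ (A\ref{ass:Hsmooth}) and averaging, the zeroth-order terms give exactly $\grd f(\Bprm^t)$. The crucial cancellation is in the first-order term: because $\sum_i(\prm_i^t-\Bprm^t)=0$, I may subtract the common Hessian $\grd^2 f(\Bprm^t)$ for free, so $\frac1n\sum_i\grd^2 f_i(\Bprm^t)(\prm_i^t-\Bprm^t) = \frac1n\sum_i[\grd^2 f_i(\Bprm^t)-\grd^2 f(\Bprm^t)](\prm_i^t-\Bprm^t)$, whose norm is at most $\frac{\varsigma_H}{\sqrt n}\norm{\CSE{t}}_F$ by A\ref{ass:hete2} and Cauchy--Schwarz, where $\CSE{t}$ stacks the consensus errors so that $\norm{\CSE{t}}_F^2=\sum_i\norm{\prm_i^t-\Bprm^t}^2$. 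Combined with the remainder this yields $\norm{\Delta^t}\le\frac{\varsigma_H}{\sqrt n}\norm{\CSE{t}}_F + \frac{L_H}{2n}\norm{\CSE{t}}_F^2$, replacing the usual $\frac{L}{\sqrt n}\norm{\CSE{t}}_F$ factor by the much smaller homogeneity constant $\varsigma_H$ at the price of a quartic consensus term. Squaring gives $\EE[\norm{\Delta^t}^2]\lesssim \frac{\varsigma_H^2}{n}\EE[\norm{\CSE{t}}_F^2] + \frac{L_H^2}{n^2}\EE[\norm{\CSE{t}}_F^4]$; this is the source of the separation between the $\gamma^3$ and $\gamma^5$ contributions.

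What remains is to control the second and fourth moments of the consensus error. The recursion, after projecting out the average, is $\CSE{t+1} = \bm W\CSE{t} - \gamma_{t+1}\,\Pi\bm G^t$, where $\Pi=\bm I-\tfrac1n\mathbf 1\mathbf 1^\top$ and $\bm G^t$ stacks the local stochastic gradients; A\ref{ass: graph} gives $\norm{\bm W\CSE{t}}_F\le(1-\rho)\norm{\CSE{t}}_F$. For the second moment I would use $\EE[\norm{\Pi\bm G^t}_F^2]\lesssim n(\sigma^2+\varsigma^2) + L^2\norm{\CSE{t}}_F^2$ — the noise variance $\sigma^2$ via A\ref{ass:SecOrdMom}, the gradient heterogeneity $\varsigma^2$ via A\ref{ass:smooth}b after replacing the empirical mean by the minimizing $\grd f(\Bprm^t)$, and the $L^2\norm{\CSE{t}}_F^2$ piece absorbed into the contraction thanks to the step-size cap — obtaining $\EE[\norm{\CSE{t+1}}_F^2]\le(1-\tfrac\rho2)\EE[\norm{\CSE{t}}_F^2] + c\,\gamma_{t+1}^2 n(\sigma^2+\varsigma^2)$. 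Unrolling this geometric recursion and weighting by $\gamma_{t+1}$ yields $\sum_t\gamma_{t+1}\EE[\norm{\CSE{t}}_F^2]\lesssim \frac{n(\sigma^2+\varsigma^2)}{\rho^2}\sum_t\gamma_{t+1}^3 + \frac{\gamma_1}{\rho}\norm{\CSE{0}}_F^2$, which after multiplication by $\varsigma_H^2/n$ gives the $\gamma^3$ term and the $n\varsigma_H^2\norm{\CSE{0}}_F^2$ part of the initial-condition term. The fourth moment is handled analogously from $\norm{\CSE{t+1}}_F\le(1-\rho)\norm{\CSE{t}}_F + \gamma_{t+1}\norm{\Pi\bm G^t}_F$ raised to the fourth power, invoking the fourth-order moment bound A\ref{ass:SecOrdMom2} to get $\EE[\norm{\Pi\bm G^t}_F^4]\lesssim n^2(\bar\sigma^4+\varsigma^4) + L^4\norm{\CSE{t}}_F^4$, producing $\sum_t\gamma_{t+1}\EE[\norm{\CSE{t}}_F^4]\lesssim \frac{n^2(\bar\sigma^4+\varsigma^4)}{\rho^4}\sum_t\gamma_{t+1}^5 + \frac{\gamma_1}{\rho}\norm{\CSE{0}}_F^4$; multiplication by $L_H^2/n^2$ gives the $\gamma^5$ term and the $L_H^2\norm{\CSE{0}}_F^4$ part. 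Assembling all pieces and tracking constants delivers the stated inequality.

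I expect the fourth-moment consensus recursion to be the main obstacle: unlike the second moment, the fourth power generates mixed terms such as $\EE[\norm{\CSE{t}}_F^3\norm{\Pi\bm G^t}_F]$ and $\EE[\norm{\CSE{t}}_F^2\norm{\Pi\bm G^t}_F^2]$ that must be split with carefully weighted Young inequalities so that the contraction factor stays strictly below one under the given step-size cap, while the $(1-\rho)$-type geometric factors are summed without losing the $\rho^{-4}$ scaling. Assumption A\ref{ass:SecOrdMom2} is precisely what keeps $\EE[\norm{\Pi\bm G^t}_F^4]$ finite and linear in $\bar\sigma^4$, and matching the exact numerical constants (the $432$ and $64$) is routine bookkeeping once both recursions are in place.
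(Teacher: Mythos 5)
Your proposal matches the paper's proof essentially step for step: the same descent lemma on the averaged iterate, the same second-order Taylor expansion exploiting $\sum_i(\prm_i^t-\Bprm^t)=0$ to replace $L$ by $\varsigma_H$ plus an $L_H$-quartic remainder, and the same second- and fourth-moment consensus recursions contracted at rate $1-\rho/2$ via A3 and weighted Young inequalities (the paper packages the weighted unrolling as a Lyapunov function ${\tt V}^t$, which is equivalent bookkeeping to your direct summation). The fourth-moment obstacle you flag is handled in the paper exactly as you anticipate, by applying Young's inequality twice with weights $\alpha=\beta=\rho/(1-\rho)$ before expanding the fourth power.
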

\noindent The above theorem provides a tighter characterization for the convergence of {\algoname} than Corollary~\ref{cor:1}. Notably, under a similar step size condition, the dominant term remains comparable to ${\sf UB}_{\sf ncvx}(T,L,{\sf D},\sigma^2)$, while the transient term is decomposed into a slower one that depends on $\varsigma_H^2$, and a faster one that depends on $L_H^2, \sigma^2, \varsigma^2$. Note that this explicitly accounts for the effects of data homogeneity via $\varsigma_H^2$. 

\begin{tcolorbox}[boxsep=2pt,left=4pt,right=4pt,top=3pt,bottom=3pt]
\begin{Corollary}\label{cor:3}
Set $\gamma_{t+1} = (1 / \sqrt{T}) \sqrt{2 {\sf D} n / (L \sigma^2)}$ and let ${\sf T}$ be chosen uniformly at random from $\{0,\ldots, T-1\}$. 
Consider the same set of assumptions as in Theorem~\ref{thm2-noncvx}. The following holds for sufficiently large $T \geq 1$,
\begin{align}
& \EE \left[ \big \| \grd f(\Bprm^{\sf T}) \big\|^2 \right] \leq  
\sqrt{\frac{32 {\sf D} L \sigma^2 }{n T}}
\label{eq:corHOS} \\
& + \frac{ {432} L_H^2(\bar\sigma^4 + 4\varsigma^4)}{\rho^4 T^{2} } \frac{ (2 {\sf D} n)^2 }{ (L \sigma^2)^2 } + \frac{ {64} \varsigma_H^2 (\sigma^2 + \varsigma^2) }{\rho^2 T} \frac{ 2 {\sf D} n }{ L \sigma^2 }. \notag 
\end{align}
\end{Corollary}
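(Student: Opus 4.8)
The plan is to treat Corollary~\ref{cor:3} as a direct specialization of Theorem~\ref{thm2-noncvx} to the constant step size $\gamma_{t+1}\equiv\gamma:=(1/\sqrt{T})\sqrt{2{\sf D}n/(L\sigma^2)}$, combined with the uniform sampling rule for ${\sf T}$. First I would record that, by the definition of ${\sf T}$ as uniform on $\{0,\dots,T-1\}$, one has $\EE[\norm{\grd f(\Bprm^{\sf T})}^2]=(1/T)\sum_{t=0}^{T-1}\EE[\norm{\grd f(\Bprm^t)}^2]$, so it suffices to bound the averaged left-hand side of Theorem~\ref{thm2-noncvx} and divide through by $\gamma T$.

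Next I would check admissibility and collapse the power sums. Since $\gamma$ is constant, each sum reduces to $\sum_{t=0}^{T-1}\gamma_{t+1}^{k}=T\gamma^{k}$ and $\gamma_1=\gamma$. The hypothesis $\sup_{t\ge1}\gamma_t\le\rho/(10L\sqrt[4]{n})$ of the theorem becomes $\gamma\le\rho/(10L\sqrt[4]{n})$, which, after substituting the value of $\gamma$, is equivalent to a lower bound of the form $T\gtrsim {\sf D}L\, n^{3/2}/(\rho^2\sigma^2)$. This is exactly what the phrase ``sufficiently large $T$'' encodes, and I would state the threshold explicitly so that Theorem~\ref{thm2-noncvx} legitimately applies.

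With these reductions, dividing the five terms of the bound by $\gamma T$ yields: a deterministic term $4{\sf D}/(\gamma T)$; a variance term $(2L\sigma^2/n)\gamma$; an $L_H^2$ term $\frac{432L_H^2(\bar\sigma^4+4\varsigma^4)}{\rho^4}\gamma^4$; a $\varsigma_H^2$ term $\frac{64\varsigma_H^2(\sigma^2+\varsigma^2)}{\rho^2}\gamma^2$; and an initialization term of order $(\rho n^2 T)^{-1}(4n\varsigma_H^2\norm{\CSE{0}}_F^2+L_H^2\norm{\CSE{0}}_F^4)$. The one genuinely substantive computation is to substitute $\gamma$ into the first two and observe that each equals $\sqrt{8{\sf D}L\sigma^2/(nT)}$ exactly, so that they add up to $2\sqrt{8{\sf D}L\sigma^2/(nT)}=\sqrt{32{\sf D}L\sigma^2/(nT)}$, the claimed dominant term in \eqref{eq:corHOS}. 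Substituting $\gamma^2=(2{\sf D}n)/(L\sigma^2 T)$ and $\gamma^4=(2{\sf D}n)^2/((L\sigma^2)^2T^2)$ into the next two contributions reproduces verbatim the $\rho^{-4}T^{-2}$ and $\rho^{-2}T^{-1}$ remainder terms of \eqref{eq:corHOS}.

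Finally, the initialization term scales as ${\cal O}(1/T)$ with a constant depending only on the fixed initial consensus error $\norm{\CSE{0}}_F$, so for $T$ beyond the same threshold it is dominated by the explicitly retained $\rho^{-2}T^{-1}$ remainder and may be discarded. I do not anticipate a real obstacle: the whole argument is bookkeeping once the constant step size is inserted. The only point demanding care is tracking the numerical constants so that the two leading contributions combine into exactly the factor $32$ under the square root, and confirming that ``sufficiently large $T$'' simultaneously certifies the step-size bound and the negligibility of the initialization term.
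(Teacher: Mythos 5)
Your proposal is correct and follows exactly the derivation the paper intends: Corollary~\ref{cor:3} is obtained by specializing Theorem~\ref{thm2-noncvx} to the constant step size, dividing by $\gamma T$, and checking that $4{\sf D}/(\gamma T)$ and $(2L\sigma^2/n)\gamma$ each equal $\sqrt{8{\sf D}L\sigma^2/(nT)}$ so that they sum to $\sqrt{32{\sf D}L\sigma^2/(nT)}$, with the remaining power sums substituting to give the stated $T^{-1}$ and $T^{-2}$ terms. Your handling of the initialization term (an ${\cal O}(1/T)$ quantity discarded for sufficiently large $T$) matches the paper's own level of rigor, which treats it the same way in the analogous Corollary~\ref{cor:1}.
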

\end{tcolorbox}
\noindent 
Corollary~\ref{cor:3} shows that the transient time of {\algoname} is: 
\beq \label{eq:b2}
    T_{\sf ncvx} = 
    {\cal O} \left( \frac{ n^{ \frac{5}{3} } }{ \rho^{ \frac{8}{3} } } \frac{ L_{H}^{\frac{4}{3}} (\bar\sigma^{\frac{8}{3}} + \varsigma^\frac{8}{3}) }{ L^{ \frac{5}{3} } \sigma^{ \frac{10}{3} } } + \frac{n^3}{\rho^4} \frac{\varsigma_H^4 (\sigma^4 + \varsigma^4) }{ L^3 \sigma^4 }  \right) .
\eeq
We now concentrate on the case of \emph{homogeneous data} where $\varsigma_H \approx 0$ such that the second term in \eqref{eq:b2} becomes negligible.
Under this approximation, \eqref{eq:b2} gives a transient time of ${\cal O}( L_H^{\frac{4}{3}} n^{\frac{5}{3}} / \rho^{ \frac{8}{3} } )$ which improves over \eqref{eq:plain-transient} with ${\cal O}( L^3 n^3/\rho^4)$ in terms of the dependence on $n, \rho$.

\subsection{Strongly convex case}
Similar to the previous subsection, we will present the general convergence rate under A\ref{ass:hete2}--A\ref{ass:SecOrdMom2}, then discuss about the potential acceleration to transient time \update{due to data homogeneity}. 
Assume that $f(\prm)$ is a strongly convex function on $\RR^d$. 
Denote the optimal solution of \eqref{eq:opt} as $\prm^\star \eqdef \argmin_{\prm\in \RR^d} f(\prm).$ 

Under this setting, we observe:
\begin{theorem}\label{thm2}
Under A\ref{ass:smooth}, \ref{ass:str_cvx}, \ref{ass: graph}, \ref{ass:SecOrdMom}[$\sigma_0 = \sigma_1 = \sigma$], \ref{ass:hete2}, \ref{ass:Hsmooth},\ref{ass:SecOrdMom2}[$\bar{\sigma}_{0}=\bar{\sigma}_{1}=\bar{\sigma}$], for fixed parameter $\tdelta>0$, and step size $\gamma_t = {a_0}/({a_1+t})$, where $a_0, a_1\in\RR^+$. Suppose the step size $\{\gamma_{t}\}_{t\geq 1}$ satisfies
\begin{align*}
    & \sup_{t\geq 1}\gamma_{t} \leq \min\left\{ \sqrt[3]{\frac{\mu}{8 c_2}} ,\frac{\rho}{\mu},
    \frac{\rho}{2\sqrt{\sigma^2 + 2L^2}}, \frac{\mu}{8(\sigma^2 +L^2)}, \frac{\rho}{\sqrt[4]{c_3}}
    \right\}
    \\
    &{\gamma_{t-1}}/\gamma_{t} \leq \sqrt{1+ (\mu/4) \gamma_{t}^r}, ~~ \forall t\geq 1, \forall r\in\{2,3,4,5\},
\end{align*}
Then, the following bound holds with probability at least $1-\frac{a_0^2}{a_1}\tdelta$: for any $t\geq 0$,
\begin{align}\label{eq_thm2_mse} 
    &\EE\left[\big\|\Bprm^{t} - \prm^\star \big\|^2\right] \leq 
    \prod_{i=1}^{t} \left(1- \frac{\mu}{4} \gamma_{i} \right){\sf D}' +
    \frac{16\sigma^2}{n \mu} \gamma_{t} \notag
    \\
    & + \frac{128 (\sigma^2 +\varsigma^2) \varsigma_H^2}{\mu^2 \rho^2}\gamma_{t}^2 + \frac{768 }{\mu\rho^2}(\sigma^2+L^2)(\sigma^2 +\varsigma^2) \gamma_{t}^3 \notag
     \\
     &  + \frac{864 }{\mu^2\rho^4}L_{H}^2(\bar{\sigma}^4+4\varsigma^4)\gamma_{t}^4 ,
\end{align}
where {$c_1 \eqdef 4(\sigma^2+L^2)$}, $c_2$, $c_3$, ${\sf D}'$ are constants defined as
\begin{align*}
    c_2 &\eqdef \textstyle \frac{192 \sigma^2}{\rho^2} \left( \frac{\bar{\sigma}^4 L_H^2}{\tdelta n \mu^2 \rho^2} + \sigma^2 + L^2 \right), 
    c_3 \eqdef 864 n (\bar{\sigma}^4 + 8 L^4) 
    \\
    {\sf D}' &\eqdef \textstyle \EE \left[ \normtxt{\Bprm^{0} - \prm^\star }^2 
    + \frac{4 c_{1}}{n \rho}\gamma_{1}^2 \normtxt{\CSE{0}}_F^2  + \frac{2L_H^2}{\mu\rho n^2}\gamma_{1}  \norm{\CSE{0}}_F^4\right]
\end{align*}
\end{theorem}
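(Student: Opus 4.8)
The plan is to track two coupled quantities---the optimality gap of the average iterate $\EE[\|\Bprm^t - \prm^\star\|^2]$ and the consensus error $\EE[\|\CSE{t}\|_F^2]$ together with its fourth-order counterpart $\EE[\|\CSE{t}\|_F^4]$---and then solve the resulting system under the prescribed step-size schedule. First I would square the update for $\Bprm^{t+1}$ and take the conditional expectation, producing a descent inequality whose cross term involves $\langle \frac{1}{n}\sum_i \grd f_i(\prm_i^t),\, \Bprm^t - \prm^\star\rangle$. The crux is a second-order Taylor expansion of each gradient map about $\Bprm^t$: writing $\grd f_i(\prm_i^t) = \grd f_i(\Bprm^t) + \grd^2 f_i(\Bprm^t)(\prm_i^t - \Bprm^t) + \bm{r}_i^t$ with $\|\bm{r}_i^t\| \le \frac{L_H}{2}\|\prm_i^t - \Bprm^t\|^2$ by A\ref{ass:Hsmooth}, and averaging over $i$. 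Since $\sum_i (\prm_i^t - \Bprm^t) = 0$, adding and subtracting the \emph{common} Hessian $\grd^2 f(\Bprm^t)$ annihilates the linear term, leaving only $\frac{1}{n}\sum_i [\grd^2 f_i(\Bprm^t) - \grd^2 f(\Bprm^t)](\prm_i^t - \Bprm^t)$, which A\ref{ass:hete2} bounds by $\frac{\varsigma_H}{\sqrt n}\|\CSE{t}\|_F$. This is precisely where data homogeneity enters: the linear-in-consensus-error contribution carries the factor $\varsigma_H$ rather than $L$, while the Taylor remainder contributes only a quadratic-in-consensus-error term scaled by $L_H$. Combining strong convexity (A\ref{ass:str_cvx}) with Young's inequality yields a recursion of the form $\EE\|\Bprm^{t+1}-\prm^\star\|^2 \le (1-\tfrac{\mu}{4}\gamma_{t+1})\EE\|\Bprm^t-\prm^\star\|^2 + \frac{\sigma^2}{n}\gamma_{t+1}^2 + \frac{c\,\gamma_{t+1}}{\mu}\big(\tfrac{\varsigma_H^2}{n}\EE\|\CSE{t}\|_F^2 + \tfrac{L_H^2}{n^2}\EE\|\CSE{t}\|_F^4\big) + \cdots$, with the variance term controlled through A\ref{ass:SecOrdMom}.

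Second, I would establish the consensus-error recursions. Projecting the {\sf DSGD} update through ${\bm U}$ and invoking A\ref{ass: graph}, the mixing step contracts $\|\CSE{t}\|_F^2$ by a factor $1-\rho$; after a Young-type splitting of the cross term, the stochastic-gradient step injects an error of order $\frac{\gamma_{t+1}^2}{\rho}\sum_i \|\grd \ell_i(\prm_i^t; Z_i^{t+1})\|^2$. Bounding the gradient norms via A\ref{ass:smooth} and A\ref{ass:SecOrdMom}---which, since $\sigma_1=\sigma>0$, couples back to $\|\Bprm^t - \prm^\star\|^2$ and $\|\CSE{t}\|_F^2$---gives $\EE\|\CSE{t+1}\|_F^2 \le (1-\tfrac{\rho}{2})\EE\|\CSE{t}\|_F^2 + {\cal O}(\gamma_{t+1}^2/\rho)\,n(\sigma^2+\varsigma^2)+\cdots$, whose steady state is of order $\gamma_t^2\,n(\sigma^2+\varsigma^2)/\rho^2$. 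An analogous but more delicate computation, using the fourth-moment bound A\ref{ass:SecOrdMom2}, produces a recursion for $\EE\|\CSE{t}\|_F^4$ with steady state of order $\gamma_t^4\,n^2(\bar\sigma^4+\varsigma^4)/\rho^4$.

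Third, I would substitute these consensus estimates into the average-iterate recursion and unroll. The step-size conditions $\gamma_{t-1}/\gamma_t \le \sqrt{1+(\mu/4)\gamma_t^r}$ for $r\in\{2,3,4,5\}$ are exactly what is needed so that a scalar recursion $a_{t+1}\le(1-\tfrac{\mu}{4}\gamma_{t+1})a_t + C\gamma_{t+1}^{k+1}$ admits the solution $a_t \le {\cal O}(C\gamma_t^k/\mu)$; applying this with $k=1$ to the variance term recovers the dominant $\frac{16\sigma^2}{n\mu}\gamma_t$, with $k=2$ to the $\varsigma_H^2$ consensus contribution recovers the $\gamma_t^2$ term, and with $k=4$ to the $L_H^2$ fourth-order contribution recovers the $\gamma_t^4$ term. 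The remaining $\gamma_t^3$ term arises from the growth part of A\ref{ass:SecOrdMom} (the $\sigma^2\|\cdot\|^2$ component, scaled by $\sigma^2+L^2$), which couples the optimality gap into the consensus input and back, while the initial data collect into the linearly contracting ${\sf D}'$ term, reproducing the bound \eqref{eq_thm2_mse}.

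The main obstacle will be the fourth-order consensus term and its interaction with the growth condition $\sigma_1=\sigma>0$. Because the variance scales with $\|\prm-\prm^\star\|^2$, the recursions for $\EE\|\CSE{t}\|_F^2$, $\EE\|\CSE{t}\|_F^4$, and $\EE\|\Bprm^t-\prm^\star\|^2$ are genuinely coupled and nonlinear, so one cannot simply solve them in sequence. To decouple them I would introduce a bad event on which the fourth-order consensus error exceeds a threshold and bound its probability by a Markov/Chebyshev argument; this is the origin of the factor $1/\tdelta$ inside $c_2$ and of the final high-probability guarantee $1-\frac{a_0^2}{a_1}\tdelta$. On the complementary good event the step-size ceiling $\gamma_t \le \sqrt[3]{\mu/(8c_2)}$ keeps the nonlinear feedback subdominant, allowing the linear-recursion machinery above to close the bound.
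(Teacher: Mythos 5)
Your proposal follows essentially the same route as the paper: the second-order Taylor expansion that cancels the linear consensus term against the common Hessian (leaving a $\varsigma_H$-weighted term plus an $L_H$-weighted quadratic remainder), the coupled second- and fourth-moment consensus recursions, a Lyapunov combination solved via the scalar recursion lemma under the stated step-size ratio conditions, and a Markov-plus-union-bound truncation to tame the quartic nonlinearity, with $1/\tdelta$ entering $c_2$ and the guarantee holding with probability $1-\tfrac{a_0^2}{a_1}\tdelta$. The one small correction is that the paper applies the high-probability truncation to the optimality gap $\|\Bprm^t-\prm^\star\|^2$ (Lemma~\ref{lem:hp-bnd}), not to the fourth-order consensus error: it is $\EE[\|\Bprm^t-\prm^\star\|^4]$ that the Lyapunov function fails to track, and writing $\|\Bprm^t-\prm^\star\|^4\leq\|\Bprm^t-\prm^\star\|^2\cdot{\rm C}/(\tdelta\gamma_t)$ on the good event is what lets the quartic input to the consensus recursion be absorbed into the contraction factor.
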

\noindent Similar to Theorem~\ref{thm2-noncvx}, the above theorem offers a tightened bound for the convergence of {\algoname} than Corollary~\ref{cor:2}. The dominant term ${\cal O}(\gamma_t)$ remains comparable to ${\sf UB}_{\sf cvx}(t,\mu, {\sf D},\sigma^2)$. Meanwhile, the dominating transient term of order ${\cal O}(\gamma_t^2)$ vanishes as $\varsigma_H^2 \to 0$ and the remaining terms that reflect the network topology $\rho$ and data homogeneity $\varsigma$ is now of the order ${\cal O}(\gamma_t^3)$. 

We remark that Theorem \ref{thm2} has been presented under a diminishing step size rule $\gamma_t = \frac{ a_0 }{ a_1 + t }$. This choice is made for the sake of simplicity only.
The expected convergence bound holds under the condition of {an event} occurring with probability at least $1-\tilde{\delta}$. This is an artifact due to the need to analyze high order moments of the optimality gap $\EE[ \| \Bprm^t - \prm^\star \|^4 ]$; see Sec.~\ref{sec:pf_thm_scvx}. Simplifying \eqref{eq_thm2_mse} gives

\begin{tcolorbox}[boxsep=2pt,left=4pt,right=4pt,top=3pt,bottom=3pt]
\begin{Corollary}\label{cor:cvx_tran}
Consider the same set of assumptions as in Theorem~\ref{thm2}. The following holds with high probability,
    \begin{align*} 
    & \EE[ \| \Bprm^t - \prm^\star \|^2 ] \\
    & = {\cal O} \left(\frac{\sigma^2}{n \mu} \frac{1}{t}  + \frac{(\sigma^2+L^2)(\sigma^2 +\varsigma^2)}{\mu\rho^2} \frac{1}{t^3}  + L_{H}^2 \frac{\bar{\sigma}^4+\varsigma^4}{\mu^2\rho^4} \frac{1}{t^4}  \right) \\
    & \quad + {\cal O} \left( \varsigma_H^2 \frac{ \sigma^2 +\varsigma^2 }{ \mu^2 \rho^2 } \frac{1}{t^2} \right) .
    \end{align*}
\end{Corollary}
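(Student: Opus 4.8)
The plan is to obtain Corollary~\ref{cor:cvx_tran} as a direct asymptotic simplification of the bound \eqref{eq_thm2_mse} in Theorem~\ref{thm2}. Since \eqref{eq_thm2_mse} already holds with probability at least $1-\frac{a_0^2}{a_1}\tdelta$ under the stated step-size rule, the ``with high probability'' qualifier is inherited verbatim, and what remains is purely to substitute $\gamma_t = a_0/(a_1+t)$ and track the order in $t$ of each of the five summands on the right-hand side.

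First I would dispatch the last four terms, which convert immediately. Since $\gamma_t = a_0/(a_1+t) = {\cal O}(1/t)$, we have $\gamma_t^k = {\cal O}(t^{-k})$, so the term $\frac{16\sigma^2}{n\mu}\gamma_t$ gives the dominant ${\cal O}\!\big(\frac{\sigma^2}{n\mu}\frac1t\big)$; the term $\frac{128(\sigma^2+\varsigma^2)\varsigma_H^2}{\mu^2\rho^2}\gamma_t^2$ gives the isolated homogeneity-dependent term ${\cal O}\!\big(\varsigma_H^2\frac{\sigma^2+\varsigma^2}{\mu^2\rho^2}\frac1{t^2}\big)$; the $\gamma_t^3$ term gives ${\cal O}\!\big(\frac{(\sigma^2+L^2)(\sigma^2+\varsigma^2)}{\mu\rho^2}\frac1{t^3}\big)$; and the $\gamma_t^4$ term gives ${\cal O}\!\big(L_H^2\frac{\bar\sigma^4+\varsigma^4}{\mu^2\rho^4}\frac1{t^4}\big)$ after folding the constant $\bar\sigma^4+4\varsigma^4 = {\cal O}(\bar\sigma^4+\varsigma^4)$ into the ${\cal O}(\cdot)$. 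These four orders are exactly the displayed terms, and I would regroup the $t^{-1},t^{-3},t^{-4}$ contributions into the first ${\cal O}(\cdot)$ and the $\varsigma_H^2$-scaled $t^{-2}$ contribution into the second, mirroring the statement.

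The only summand requiring genuine care---and the main obstacle---is the initialization term $\prod_{i=1}^t(1-\frac{\mu}{4}\gamma_i)\,{\sf D}'$, which must be shown to decay strictly faster than the ${\cal O}(1/t)$ dominant term so that it can be dropped. I would bound it by taking logarithms, using $\ln(1-x)\le -x$ and comparing the harmonic-type sum to an integral:
\[
\prod_{i=1}^t\Big(1-\tfrac{\mu a_0}{4(a_1+i)}\Big) \le \exp\!\Big(-\tfrac{\mu a_0}{4}\sum_{i=1}^t\tfrac{1}{a_1+i}\Big) \le \Big(\tfrac{a_1+1}{a_1+t+1}\Big)^{\mu a_0/4}.
\]
Under the natural choice $a_0 > 4/\mu$ (compatible with the step-size bounds of Theorem~\ref{thm2} upon taking $a_1$ correspondingly large), the exponent $\mu a_0/4$ exceeds $1$, so the product is $o(1/t)$; hence ${\sf D}'\prod_{i=1}^t(1-\frac\mu4\gamma_i)$ is dominated by the leading $\frac{\sigma^2}{n\mu t}$ term and is absorbed into the first ${\cal O}(\cdot)$. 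Assembling the four surviving orders then yields the claimed expression. The argument is thus essentially bookkeeping; the sole subtlety is confirming that the geometric contraction factor can be tuned to beat $1/t$, which rests only on the freedom to enlarge $a_0$.
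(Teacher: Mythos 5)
Your proposal is correct and follows essentially the same route as the paper, which obtains Corollary~\ref{cor:cvx_tran} by directly substituting $\gamma_t = a_0/(a_1+t)$ into \eqref{eq_thm2_mse} and absorbing the sub-geometrically decaying initialization term $\prod_{i=1}^t(1-\frac{\mu}{4}\gamma_i)\,{\sf D}'$ into the dominant ${\cal O}(1/t)$ term for sufficiently large $t$ (exactly as done for Corollary~\ref{cor:2}). Your explicit verification that the product decays like $t^{-\mu a_0/4}$ and hence is $o(1/t)$ once $a_0 > 4/\mu$ is a slightly more careful rendering of the same bookkeeping.
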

\end{tcolorbox}
\noindent From Corollary \ref{cor:cvx_tran}, we deduce that the transient time [cf.~Definition \ref{def:ctran}] for {\algoname} under the above premises is given by
\begin{align} \label{eq:transient-scvx-improved}
T_{\sf cvx} = {\cal O}\left( \frac{\sqrt{n}(1+L/\sigma)(\sigma+\varsigma)}{\rho} + \frac{\varsigma_H^2 n}{\rho^2} \cdot \frac{\sigma^2 +\varsigma^2}{\mu \sigma^2} \right).
\end{align}
{Again, to study \eqref{eq:transient-scvx-improved}, we concentrate on the \emph{homogeneous data setting} where $\varsigma_H \approx 0$ and the second term can be ignored.}
We recall that the previously known bound for transient time of {\algoname} is ${\cal O}(n / \rho^2)$ as shown in \eqref{eq:plain-transient}; also see \cite{pu2021distributed}. Our bound improves it to ${\cal O}( \sqrt{n} / \rho )$ under the additional A\ref{ass:hete2}--A\ref{ass:SecOrdMom2}. 
We show that the {\algoname} algorithm takes advantage of homogeneous data to yield an accelerated transient time.

\section{Proof of Main Results}\label{sec:proof_outline}
To setup the analysis, we observe that the average iterate, $\Bprm^t = (1/n) \sum_{i=1}^n \prm_i^t$,  satisfies the recursion for any $t \geq 0$:
\begin{align}\label{eq:avg_it} \textstyle 
    \Bprm^{t+1} = \Bprm^t - (\gamma_{t+1} /n) \sum_{i=1}^{n} \grd \ell_i ( \prm_i^t ; Z_i^{t+1} ) .
\end{align}
We shall use $\EE_t [\cdot]$ to denote the expectation conditional on the filtration given by the sigma-field $\sigma( \prm_i^0, Z_i^s, s \leq t, \forall i)$.
To facilitate our discussions, we define the quantities
\begin{align}
& \widetilde{\boldsymbol{\theta}}^t:=\overline{\boldsymbol{\theta}}^t-\boldsymbol{\theta}^\star,~~\boldsymbol{\Theta}_o^t:=\left(\boldsymbol{\theta}_1^t \cdots \boldsymbol{\theta}_n^t\right)-\overline{\boldsymbol{\theta}}^t \mathbf{1}^{\top},\label{eq:constant}
\end{align}
From \eqref{eq:avg_it}, it is clear that the {\algoname} recursion differs from that of a {\sf CSGD} algorithm only by the iterate that the stochastic gradient map evaluates on. The latter would update $\Bprm^{t}$ through the direction $(1/n) \sum_{i=1}^{n} \grd \ell_i ( \Bprm^t ; Z_i^{t+1} )$. 

The above observation suggests that the analysis of {\algoname} hinges on how to control the difference $\sum_{i=1}^{n} ( \grd \ell_i ( \prm_i^t ; Z_i^{t+1} ) - \grd \ell_i ( \Bprm^t ; Z_i^{t+1} ) )$, which has the expected value $\sum_{i=1}^{n} ( \grd f_i ( \prm_i^t ) - \grd f_i ( \Bprm^t ) )$. Under A\ref{ass:smooth}, the latter may be bounded as $L \sum_{i=1}^n \| \prm_i^t - \Bprm^t \|$. 

We depart from using such a crude bound obtained by A\ref{ass:smooth}. The key technique in our refined analysis is to study the expected difference vector via Taylor expansion and the second order smoothness property A\ref{ass:Hsmooth}. Consider the following approximation error for the gradient map $\grd f_i(\prm)$:
\begin{align}
    {\cal M}_i (\prm^\prime; \prm) \eqdef \grd f_i(\prm^{\prime}) - \update{\grd} f_i(\prm) - \grd^2 f_i(\prm)(\prm^\prime-\prm)
\end{align}
By A\ref{ass:Hsmooth}, it holds that \cite[Lemma 1.2.5]{nesterov2003introductory}
\begin{align}\label{eq:nesterov}
    \normtxt{{\cal M}_i (\prm^\prime; \prm)} \leq \frac{L_{H}}{2} \norm{\prm^\prime - \prm}^2, \forall \prm, \prm^\prime \in \RR^d.
\end{align}
Importantly, it can be derived that 
\begin{align}\label{eq-is}
& \textstyle \sum_{i=1}^{n} [\grd f_i (\prm_i^t) - \grd f_i(\Bprm^t)]
\\ 
& \textstyle = \sum_{i=1}^{n} \left( {\cal M}_i(\prm_i^t; \Bprm^t) + [\grd^2 f_i(\Bprm^t)-\grd^2 f(\Bprm^t)](\prm_i^t - \Bprm^t) \right)\nonumber,
\end{align}
where we have used the linearity property $\frac{1}{n}\sum_{i=1}^{n} \grd^2 f(\Bprm^t)\prm_i^t =  \grd^2 f(\Bprm^t)\Bprm^t$. When $\varsigma_H = 0$, the last term on the RHS of \eqref{eq-is} vanishes and \begin{equation} 
\textstyle \left\| \sum_{i=1}^{n} [\grd f_i (\prm_i^t) - \grd f_i(\Bprm^t)] \right\| \leq \frac{L_H}{2} \| \Prm_o^t \|_F^2.
\end{equation} 
This yields a \emph{quadratic upper bound} that decays faster than the crude bound from A\ref{ass:smooth}. In the remainder of this section, we develop tighter bounds for the convergence of {\algoname} utilizing the above observation.

\subsection{Proof of Theorem~\ref{thm2-noncvx}}
Taking the insights from \eqref{eq-is}, we observe the improved descent lemma:
\begin{lemma}\label{lem:des2-ncvx}
Under A\ref{ass:smooth},\ref{ass: graph},\ref{ass:SecOrdMom}[$\sigma_{0}=\sigma, \sigma_{1}=0$],\ref{ass:hete2},\ref{ass:Hsmooth}, if $\sup_{t \geq 1} \gamma_{t} \leq 1/(4L)$, 
then for any $t \geq 0$, it holds
    \beq \label{eq:des}
    \begin{aligned}
        \EE_t[ f( \Bprm^{t+1} ) ] & \leq f( \Bprm^t ) - \frac{\gamma_{t+1}}{4} \| \grd f( \Bprm^t ) \|^2 + \frac{ \gamma_{t+1}^2 L \sigma^2}{2n} \\
        & \quad + {\frac{\gamma_{t+1}}{n} }\Big(  \frac{L_{H}^2}{n} \| \CSE{t} \|_F^4 + 4 \varsigma_H^2 \| \CSE{t} \|_F^{2} \Big) .
    \end{aligned}
    \eeq
\end{lemma}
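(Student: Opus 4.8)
The plan is to prove the improved descent lemma by starting from the standard $L$-smoothness descent inequality applied to the average-iterate recursion \eqref{eq:avg_it}, and then carefully bounding the resulting inner-product term using the Taylor-expansion identity \eqref{eq-is} rather than the crude Lipschitz bound. First I would invoke $L$-smoothness of $f$ together with \eqref{eq:avg_it} to write, conditioned on the filtration, $\EE_t[f(\Bprm^{t+1})] \leq f(\Bprm^t) - \gamma_{t+1} \pscal{\grd f(\Bprm^t)}{\frac1n \sum_i \grd f_i(\prm_i^t)} + \frac{L \gamma_{t+1}^2}{2} \EE_t \big[ \| \frac1n \sum_i \grd \ell_i(\prm_i^t; Z_i^{t+1}) \|^2 \big]$, using unbiasedness of the stochastic gradients from A\ref{ass:SecOrdMom}.

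Next I would split the cross term by inserting $\pm \grd f(\Bprm^t) = \pm \frac1n \sum_i \grd f_i(\Bprm^t)$, writing $\frac1n\sum_i \grd f_i(\prm_i^t) = \grd f(\Bprm^t) + \frac1n\sum_i[\grd f_i(\prm_i^t) - \grd f_i(\Bprm^t)]$. This produces a clean $-\gamma_{t+1}\|\grd f(\Bprm^t)\|^2$ term plus a residual inner product against $\frac1n \sum_i[\grd f_i(\prm_i^t) - \grd f_i(\Bprm^t)]$. The crucial step is to bound this residual using \eqref{eq-is}, which expresses the difference sum as $\sum_i {\cal M}_i(\prm_i^t;\Bprm^t) + \sum_i [\grd^2 f_i(\Bprm^t) - \grd^2 f(\Bprm^t)](\prm_i^t - \Bprm^t)$. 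I would apply Young's inequality to the inner product $-\gamma_{t+1}\pscal{\grd f(\Bprm^t)}{\frac1n\sum_i[\cdots]}$ so as to peel off $\frac{\gamma_{t+1}}{2}\|\grd f(\Bprm^t)\|^2$ (combining with the main term to leave the factor $1/4$) against $\frac{\gamma_{t+1}}{2}\| \frac1n \sum_i[\grd f_i(\prm_i^t)-\grd f_i(\Bprm^t)]\|^2$. For the latter, I would bound $\|{\cal M}_i(\prm_i^t;\Bprm^t)\| \leq \frac{L_H}{2}\|\prm_i^t-\Bprm^t\|^2$ via \eqref{eq:nesterov} and $\|\grd^2 f_i(\Bprm^t)-\grd^2 f(\Bprm^t)\| \leq \varsigma_H$ via A\ref{ass:hete2}, then use $\|a+b\|^2 \le 2\|a\|^2 + 2\|b\|^2$ and Cauchy--Schwarz over the index $i$ to turn the sums into $\frac{L_H^2}{n}\|\CSE{t}\|_F^4$ and $\varsigma_H^2 \|\CSE{t}\|_F^2$ terms, matching the stated coefficients (the factor $\frac1n$ arising from the $1/n$ averaging).

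For the variance term $\frac{L\gamma_{t+1}^2}{2}\EE_t[\|\frac1n\sum_i \grd\ell_i(\prm_i^t;Z_i^{t+1})\|^2]$, I would decompose into mean plus noise: the noise part is bounded by $\frac{\sigma^2}{n}$ using A\ref{ass:SecOrdMom}[$\sigma_1=0$] and independence of the $Z_i^{t+1}$ across agents (so cross terms vanish and the variance of the average scales as $1/n$), giving the $\frac{\gamma_{t+1}^2 L \sigma^2}{2n}$ term. The mean part $\|\frac1n\sum_i \grd f_i(\prm_i^t)\|^2$ would be absorbed by comparing against $\|\grd f(\Bprm^t)\|^2$; here the step-size restriction $\gamma_{t+1} \leq 1/(4L)$ is used to ensure the $\gamma_{t+1}^2 L$ contribution of this mean part is dominated by the available descent, and I would fold any residual $\|\grd f(\Bprm^t)\|^2$ contribution back to recover the coefficient $1/4$.

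The main obstacle I anticipate is the careful bookkeeping in combining the cross-term and variance-term contributions of $\|\grd f(\Bprm^t)\|^2$ so that the final descent coefficient comes out exactly $1/4$; this requires using the step-size constraint $\gamma_{t+1}\leq 1/(4L)$ to absorb the $\|\frac1n\sum_i \grd f_i(\prm_i^t)\|^2$ and $\varsigma_H^2$-cross contributions without inflating the leading coefficient. A secondary subtlety is ensuring the $\frac1n$ prefactor is correctly tracked: the consensus-error terms enter with an overall $\frac{\gamma_{t+1}}{n}$ factor (rather than $\gamma_{t+1}$), which is exactly what sharpens the analysis relative to the crude $L\sum_i\|\prm_i^t-\Bprm^t\|$ bound, so I would keep explicit track of all the $1/n$ factors generated by the Jensen/Cauchy--Schwarz steps.
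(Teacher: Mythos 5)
Your proposal is correct and follows essentially the same route as the paper's proof: the $L$-smoothness descent step on the averaged recursion \eqref{eq:avg_it}, the Young/polarization split of the cross term against $\frac{1}{n}\sum_i(\grd f_i(\prm_i^t)-\grd f_i(\Bprm^t))$, the variance decomposition giving $\sigma^2/n$, and crucially the Taylor-expansion bound \eqref{eq-is}--\eqref{eq:high-order-bd-1st} with $\|{\cal M}_i\|\leq \frac{L_H}{2}\|\prm_i^t-\Bprm^t\|^2$ and A\ref{ass:hete2} to obtain the $\frac{L_H^2}{n}\|\CSE{t}\|_F^4 + 4\varsigma_H^2\|\CSE{t}\|_F^2$ perturbation with the correct $\gamma_{t+1}/n$ prefactor. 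The coefficient bookkeeping you outline (absorbing the $\gamma_{t+1}^2 L\|\grd f(\Bprm^t)\|^2$ contribution via $\gamma_{t+1}\leq 1/(4L)$ to land on $1/4$) is exactly how the paper arrives at \eqref{eq:expF1} before substituting the high-order bound.
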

\noindent The proof is in Appendix \ref{app:ncvx2}. The last term of \eqref{eq:des} manifests the effects of second order smoothness deduced in \eqref{eq-is}. In particular, the \emph{perturbation} term in the above descent lemma is proportional to $\varsigma_H^2 \| \CSE{t} \|_F^{2} + \frac{L_H^2}{n} \| \CSE{t} \|_F^4$, whose first term vanishes as $\varsigma_H \to 0$ and the second term is anticipated to decay at a fast rate. 

To this end, we observe the following bounds on $\| \CSE{t} \|_F^2, \| \CSE{t} \|_F^4$. 
\begin{lemma}\label{lem:ceb-ncvx} Under A\ref{ass:smooth},\ref{ass: graph},\ref{ass:SecOrdMom}[$\sigma_0 = \sigma,  \sigma_1 = 0$]. If $\sup_{t \geq 1}\gamma_{t} \leq \frac{\rho}{4L}$, then for any $t \geq 0$, it holds
\begin{align}
& \EE_t [\norm{\CSE{t+1}}_F^2] \leq \left(1- \frac{\rho}{2} \right)  \norm{\CSE{t}}_F^2 + 2n (\varsigma^2 + \sigma^2 ) \frac{\gamma_{t+1}^2}{\rho}
\end{align}
If in addition A\ref{ass:SecOrdMom2}[$\bar{\sigma}_{0}=\bar{\sigma}, \bar{\sigma}_{1}=0$] holds, $\sup_{t \geq 1}\gamma_{t}\leq \frac{\rho}{10 L \sqrt[4]{n}} $, then for any $t \geq 0$, it holds
\begin{align}
\EE_t \! \norm{\CSE{t+1}}^4_F \leq \left( 1 - \frac{\rho}{2} \right) \norm{\CSE{t}}_F^4  \!+\! 54n^2 (\bar{\sigma}^4 \!+\! 4 \varsigma^4)\frac{\gamma_{t+1}^4}{\rho^3}
\end{align}
\end{lemma}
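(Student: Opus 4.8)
The plan is to derive both recursions from the matrix form of the {\algoname} update, projected onto the consensus subspace. Collecting the iterates into the stacked matrix $\boldsymbol{\Theta}^t := (\prm_1^t \cdots \prm_n^t)$, the update \eqref{eq:dsgd} reads $\boldsymbol{\Theta}^{t+1} = \boldsymbol{\Theta}^t {\bm W}^\top - \gamma_{t+1} \boldsymbol{G}^t$, where $\boldsymbol{G}^t$ stacks the stochastic gradients $\grd \ell_i(\prm_i^t; Z_i^{t+1})$ columnwise. Since $\CSE{t} = \boldsymbol{\Theta}^t {\bm U}{\bm U}^\top$, and using $\mathbf{1}^\top {\bm U} = 0$ together with ${\bm W}^\top \mathbf{1} = \mathbf{1}$ from A\ref{ass: graph}, one checks that ${\bm W}^\top {\bm U} = {\bm U}({\bm U}^\top {\bm W}^\top {\bm U})$. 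Hence, with $\boldsymbol{X}^t := \boldsymbol{\Theta}^t {\bm U}$ and $\boldsymbol{M} := {\bm U}^\top {\bm W}^\top {\bm U}$ (so $\norm{\boldsymbol{M}} \leq 1-\rho$), the recursion decouples as $\boldsymbol{X}^{t+1} = \boldsymbol{X}^t \boldsymbol{M} - \gamma_{t+1} \boldsymbol{G}^t {\bm U}$, and $\norm{\CSE{t}}_F = \norm{\boldsymbol{X}^t}_F$ because ${\bm U}$ has orthonormal columns. This reduces both claims to estimating moments of $\boldsymbol{X}^{t+1}$.

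For the second-moment bound I would split $\boldsymbol{G}^t {\bm U} = \bar{\boldsymbol{G}}^t {\bm U} + \boldsymbol{\Xi}^t {\bm U}$ into its conditional mean $\bar{\boldsymbol{G}}^t$ (the true gradients $\grd f_i(\prm_i^t)$) and a zero-mean noise part $\boldsymbol{\Xi}^t$. Since the noise is conditionally mean-zero, $\EE_t \norm{\boldsymbol{X}^{t+1}}_F^2 = \norm{\boldsymbol{X}^t \boldsymbol{M} - \gamma_{t+1}\bar{\boldsymbol{G}}^t{\bm U}}_F^2 + \gamma_{t+1}^2 \EE_t\norm{\boldsymbol{\Xi}^t {\bm U}}_F^2$. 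A Young's inequality tuned to $\rho$ (choosing the weight $\alpha$ so that $(1+\alpha)(1-\rho)^2 \leq 1-\rho$, which yields the factor $1/\rho$ on the gradient term) contracts the first piece, while the deterministic gradient is controlled by centering: $\norm{\bar{\boldsymbol{G}}^t {\bm U}}_F^2 = \sum_i \norm{\grd f_i(\prm_i^t) - \tfrac1n\sum_j \grd f_j(\prm_j^t)}^2 \leq \sum_i \norm{\grd f_i(\prm_i^t) - \grd f(\Bprm^t)}^2 \leq 2L^2 \norm{\CSE{t}}_F^2 + 2n\varsigma^2$, using that the mean minimizes the dispersion, then A\ref{ass:smooth}-\ref{item:lips} and A\ref{ass:smooth}-\ref{item:hete}. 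The noise term is bounded by $n\sigma^2$ via A\ref{ass:SecOrdMom}. The step size $\gamma_t \leq \rho/(4L)$ makes the resulting $\gamma_{t+1}^2 L^2/\rho$ contribution absorbable into the contraction, leaving the factor $1-\rho/2$ and the stated transient $2n(\varsigma^2+\sigma^2)\gamma_{t+1}^2/\rho$ (merging the $\sigma^2$ noise term via $1 \leq 2/\rho$).

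The fourth-moment bound is the crux. Writing $\boldsymbol{X}^{t+1} = \boldsymbol{A} + \boldsymbol{B}$ with $\boldsymbol{A} := \boldsymbol{X}^t \boldsymbol{M} - \gamma_{t+1}\bar{\boldsymbol{G}}^t {\bm U}$ deterministic given the filtration and $\boldsymbol{B} := -\gamma_{t+1}\boldsymbol{\Xi}^t {\bm U}$ zero-mean, I would expand $\EE_t\norm{\boldsymbol{A}+\boldsymbol{B}}_F^4 = \EE_t(\norm{\boldsymbol{A}}_F^2 + 2\langle \boldsymbol{A},\boldsymbol{B}\rangle + \norm{\boldsymbol{B}}_F^2)^2$. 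The term $\norm{\boldsymbol{A}}_F^2\langle\boldsymbol{A},\boldsymbol{B}\rangle$ vanishes since $\EE_t\boldsymbol{B}=0$; the surviving cross terms $\EE_t\langle \boldsymbol{A},\boldsymbol{B}\rangle^2$ and $\EE_t\langle\boldsymbol{A},\boldsymbol{B}\rangle\norm{\boldsymbol{B}}_F^2$ are dispatched by Cauchy--Schwarz and AM--GM, giving $\EE_t\norm{\boldsymbol{X}^{t+1}}_F^4 \leq \norm{\boldsymbol{A}}_F^4 + 8\norm{\boldsymbol{A}}_F^2 \EE_t\norm{\boldsymbol{B}}_F^2 + 3\EE_t\norm{\boldsymbol{B}}_F^4$. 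I then square the deterministic estimate of the second-moment step with a nested Young's inequality to obtain $\norm{\boldsymbol{A}}_F^4 \leq (1-\tfrac34\rho)\norm{\CSE{t}}_F^4 + {\cal O}(\rho^{-3})\gamma_{t+1}^4 \norm{\bar{\boldsymbol{G}}^t{\bm U}}_F^4$, and bound $\norm{\bar{\boldsymbol{G}}^t{\bm U}}_F^4 \leq 8L^4\norm{\CSE{t}}_F^4 + 8n^2\varsigma^4$. The fourth noise moment uses conditional independence of $\{Z_i^{t+1}\}_i$ across agents: $\EE_t\norm{\boldsymbol{\Xi}^t}_F^4 = \EE_t(\sum_i \norm{\boldsymbol{\Xi}_i^t}^2)^2 \leq n\bar\sigma^4 + n(n-1)\sigma^4 \lesssim n^2\bar\sigma^4$ via A\ref{ass:SecOrdMom2}. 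Collecting terms and invoking the sharpened step size $\gamma_t \leq \rho/(10L\sqrt[4]{n})$ lets the amplified $\rho^{-3}\gamma_{t+1}^4 L^4\norm{\CSE{t}}_F^4$ term and the lower-order cross terms be absorbed into the contraction budget, producing the factor $1-\rho/2$ and the transient $54 n^2(\bar\sigma^4 + 4\varsigma^4)\gamma_{t+1}^4/\rho^3$ after folding the mixed $\sigma^2\varsigma^2$ and $\sigma^4$ terms via $\sigma\leq\bar\sigma$ and $\rho\leq 1$.

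The main obstacle is precisely this fourth-moment recursion. One must exploit the conditional mean-zero property to kill the first-order noise terms and use Cauchy--Schwarz on the two odd cross terms \emph{without} picking up contributions of order $\gamma_{t+1}$, which would spoil the $\gamma^4$ transient rate; and one must establish the $n^2$-scaling of $\EE_t\norm{\boldsymbol{\Xi}^t}_F^4$, which genuinely relies on the independence of the per-agent samples. The remaining difficulty is purely the constant bookkeeping: choosing the several Young weights so that the composite contraction stays at $1-\rho/2$ while every remainder term collapses into the single $\gamma_{t+1}^4/\rho^3$ expression. This last part is tedious but routine.
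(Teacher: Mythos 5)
Your proposal is correct in substance and reaches both recursions, but it takes a genuinely different route from the paper's proof (Appendix A). The paper never separates the stochastic gradient into mean plus noise at the level of the recursion: it writes $\CSE{t+1} = {\bm U}({\bm U}^\top{\bm W}{\bm U}){\bm U}^\top\Prm^t - \gamma_{t+1}{\bm U}{\bm U}^\top\Tgrd F^t$, applies Young's inequality with weight $\alpha=\rho/(1-\rho)$ to the \emph{entire} stochastic term (once for the second moment, twice for the fourth), and then bounds $\EE_t\norm{{\bm U}{\bm U}^\top\Tgrd F^t}_F^2$ and $\EE_t\norm{{\bm U}{\bm U}^\top\Tgrd F^t}_F^4$ directly via the three-way split into noise, gradient dispersion, and averaged noise, using $(a+b+c)^2\le 3(\cdots)$, $(a+b+c)^4\le 27(\cdots)$, A\ref{ass:smooth}, A\ref{ass:SecOrdMom}/A\ref{ass:SecOrdMom2}, and Cauchy--Schwarz. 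Your approach instead exploits $\EE_t[\boldsymbol{\Xi}^t]=0$ to get an exact orthogonal decomposition for the second moment and to kill the odd cross terms in the fourth-moment expansion. This buys you a slightly sharper placement of the pure noise contribution (at order $\gamma_{t+1}^2 n\sigma^2$ and $\gamma_{t+1}^4 n^2\bar\sigma^4$ rather than with the extra $1/\rho$ and $1/\rho^3$ penalties), but at the cost of the cross term $8\norm{\boldsymbol{A}}_F^2\,\EE_t\norm{\boldsymbol{B}}_F^2$, which re-couples $\gamma_{t+1}^2\norm{\CSE{t}}_F^2$ to the recursion and must be re-absorbed by an additional Young step; the paper's cruder bound avoids this bookkeeping entirely. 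Both routes are compatible with the stated step-size thresholds $\rho/(4L)$ and $\rho/(10L\sqrt[4]{n})$.

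Two small corrections to your writeup. First, the $n^2$ scaling of $\EE_t\norm{\boldsymbol{\Xi}^t}_F^4$ does \emph{not} genuinely rely on independence across agents: $\bigl(\sum_{i}\norm{\boldsymbol{\Xi}_i^t}^2\bigr)^2 \le n\sum_i\norm{\boldsymbol{\Xi}_i^t}^4 \le n^2\bar\sigma^4$ by Cauchy--Schwarz and A\ref{ass:SecOrdMom2} alone, and this is exactly how the paper obtains it (independence only tightens the constant). Second, your accounting is unlikely to reproduce the literal constant $54$ in front of $n^2\bar\sigma^4\gamma_{t+1}^4/\rho^3$ once the absorbed cross term is folded in; the form of the bound and the $\rho^{-3}$, $n^2$ dependence are reproduced, but the absolute constants differ from the paper's, which come out of the specific $(a+b+c)^4\le 27(a^4+b^4+c^4)$ expansion. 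Neither point affects the validity of the lemma as an order statement.
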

\noindent The proof is in Appendix \ref{app:unif-cons-bd}. Subsequently, we construct the Lyapunov function:
\beq 
{\tt V}^t := \EE \Big[ f( \Bprm^t ) - f^\star + \frac{ {2} \gamma_{t} }{ \rho n } \big( \textstyle 4 \varsigma_H^2 \| \CSE{t} \|_F^2 + \frac{L_H^2}{n} \| \CSE{t} \|_F^4 \big) \Big] .
\eeq 
Note that ${\tt V}^t \geq 0$ for $t \geq 0$. Combining Lemma~\ref{lem:des2-ncvx}, \ref{lem:ceb-ncvx} shows
\beq 
\begin{aligned}
 {\tt V}^{t+1}  & \leq {\tt V}^t - \frac{ \gamma_{t+1} }{4} \| \grd f( \Bprm^t ) \|^2 + \frac{ \gamma_{t+1}^2 L \sigma^2}{2n} \\
& + {16} \varsigma_H^2 (\varsigma^2 + \sigma^2 ) \frac{\gamma_{t+1}^3}{\rho^2} + {108} L_H^2  (\bar{\sigma}^4 \!+\! 4 \varsigma^4)\frac{\gamma_{t+1}^5}{\rho^4}  
\end{aligned}
\eeq 
Summing up the above inequality from $t=0$ to $t=T-1$ and rearranging terms lead to Theorem~\ref{thm2-noncvx}.  

\subsection{Proof of Theorem~\ref{thm2}} \label{sec:pf_thm_scvx}
Our analysis relies on the following refined descent lemma under strongly convex objective function. 
\begin{lemma}\label{lem:des2-scvx}
Under A\ref{ass:smooth},\ref{ass:str_cvx},\ref{ass:SecOrdMom}[$\sigma_{0}=\sigma_{1}=\sigma$],\ref{ass:hete2},\ref{ass:Hsmooth}. Assume that the step size satisfies $\sup_{t \geq 1} \gamma_{t}\leq {\mu}/(8(\sigma^2 + L^2))$. Then, it holds for any $t \geq 0$ that
\begin{align}\label{eq:ba}
    &\EE_t\norm{\Tprm^{t+1}}^2 
    \leq  (1-\frac{\mu}{2} \gamma_{t+1}) \normtxt{\Tprm^{t}}^2 + {\frac{2\sigma^2}{n}}\gamma_{t+1}^2 
    \\
    &\! + \! \frac{ \gamma_{t+1}^2 }{ n } 4(\sigma^2 \!+\! L^2)  \norm{\CSE{t}}_F^2 + \frac{ \gamma_{t+1} }{2 n \mu} \Big(  \frac{L_{H}^2}{n} \| \CSE{t} \|_F^4 + 4 \varsigma_H^2 \| \CSE{t} \|_F^{2} \Big) .  \nonumber
\end{align}
\end{lemma}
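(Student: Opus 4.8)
The plan is to mirror the classical strongly convex SGD descent analysis, but to treat the first-order (cross) term and the second-order $({\cal O}(\gamma_{t+1}^2))$ term with \emph{different} levels of sharpness. Starting from the averaged recursion \eqref{eq:avg_it}, I would expand
\[
\EE_t\norm{\Tprm^{t+1}}^2 = \norm{\Tprm^t}^2 - 2\gamma_{t+1}\pscal{\Tprm^t}{\tfrac1n\textstyle\sum_{i}\grd f_i(\prm_i^t)} + \gamma_{t+1}^2\,\EE_t\big\|\tfrac1n\textstyle\sum_i \grd\ell_i(\prm_i^t;Z_i^{t+1})\big\|^2,
\]
where unbiasedness (A\ref{ass:SecOrdMom}) collapses the stochastic gradient in the cross term to $\frac1n\sum_i\grd f_i(\prm_i^t)$. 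The whole argument then hinges on replacing the naive bound $\norm{\frac1n\sum_i\grd f_i(\prm_i^t)-\grd f(\Bprm^t)}\le \frac Ln\sum_i\norm{\prm_i^t-\Bprm^t}$ by the sharper identity \eqref{eq-is}.

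For the ${\cal O}(\gamma_{t+1}^2)$ term I would use only the \emph{crude} estimates, since it is already higher order. Conditional independence of $\{Z_i^{t+1}\}_i$ across agents splits it into a variance piece $\frac1{n^2}\sum_i\EE_t\norm{\grd\ell_i-\grd f_i(\prm_i^t)}^2$, which A\ref{ass:SecOrdMom}[$\sigma_0=\sigma_1=\sigma$] together with $\norm{\prm_i^t-\prm^\star}^2\le 2\norm{\prm_i^t-\Bprm^t}^2+2\norm{\Tprm^t}^2$ bounds by $\frac{\sigma^2}{n}+\frac{2\sigma^2}{n^2}\norm{\CSE{t}}_F^2+\frac{2\sigma^2}{n}\norm{\Tprm^t}^2$, and a mean piece $\norm{\frac1n\sum_i\grd f_i(\prm_i^t)}^2$, bounded by Jensen and $L$-smoothness (A\ref{ass:smooth}) by $\frac{2L^2}{n}\norm{\CSE{t}}_F^2+2L^2\norm{\Tprm^t}^2$. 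These produce the $\frac{\gamma_{t+1}^2}{n}(\sigma^2+L^2)\norm{\CSE{t}}_F^2$ and $\frac{\sigma^2}{n}\gamma_{t+1}^2$ terms of \eqref{eq:ba}, the $\gamma_{t+1}^2\norm{\Tprm^t}^2$ pieces being absorbed later. For the first-order term I would write $\frac1n\sum_i\grd f_i(\prm_i^t)=\grd f(\Bprm^t)+\mathbf e^t$ with $\mathbf e^t:=\frac1n\sum_i[\grd f_i(\prm_i^t)-\grd f_i(\Bprm^t)]$. The $\grd f(\Bprm^t)$ part is handled by $\mu$-strong convexity (A\ref{ass:str_cvx}), $\pscal{\grd f(\Bprm^t)}{\Tprm^t}\ge\mu\norm{\Tprm^t}^2$, which supplies the contraction. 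The remainder $-2\gamma_{t+1}\pscal{\Tprm^t}{\mathbf e^t}$ is where the \emph{second-order smoothness} enters: using \eqref{eq-is}, the Nesterov remainder bound \eqref{eq:nesterov}, the Hessian-similarity bound A\ref{ass:hete2}, and $\sum_i\norm{\prm_i^t-\Bprm^t}\le\sqrt n\,\norm{\CSE{t}}_F$, I would obtain $\norm{\mathbf e^t}\le\frac{L_H}{2n}\norm{\CSE{t}}_F^2+\frac{\varsigma_H}{\sqrt n}\norm{\CSE{t}}_F$, hence $\norm{\mathbf e^t}^2\lesssim\frac{L_H^2}{n^2}\norm{\CSE{t}}_F^4+\frac{\varsigma_H^2}{n}\norm{\CSE{t}}_F^2$. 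A Young's inequality $2\gamma_{t+1}\norm{\Tprm^t}\norm{\mathbf e^t}\le\frac{\mu}{2}\gamma_{t+1}\norm{\Tprm^t}^2+\frac{2\gamma_{t+1}}{\mu}\norm{\mathbf e^t}^2$ then yields exactly the last group of \eqref{eq:ba}, namely $\frac{\gamma_{t+1}}{2n\mu}(\frac{L_H^2}{n}\norm{\CSE{t}}_F^4+4\varsigma_H^2\norm{\CSE{t}}_F^2)$.

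Finally I would collect the $\norm{\Tprm^t}^2$ coefficients: the $-2\mu\gamma_{t+1}$ from strong convexity, the $+\frac\mu2\gamma_{t+1}$ from Young, and the ${\cal O}(\gamma_{t+1}^2)(\sigma^2+L^2)$ from the second moment. The step-size condition $\sup_t\gamma_t\le\mu/(8(\sigma^2+L^2))$ guarantees $2(\sigma^2+L^2)\gamma_{t+1}^2\le\frac\mu4\gamma_{t+1}$, so the $\gamma_{t+1}^2\norm{\Tprm^t}^2$ contribution is swallowed and the net coefficient is at most $1-\frac\mu2\gamma_{t+1}$, giving \eqref{eq:ba}. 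The main obstacle, and the genuinely novel step, is the treatment of $\mathbf e^t$: one must recognize that the crude bound $\norm{\mathbf e^t}\le\frac Ln\sum_i\norm{\prm_i^t-\Bprm^t}={\cal O}(\norm{\CSE{t}}_F/\sqrt n)$ is too lossy for the \emph{first-order} term, and instead invoke the Taylor identity \eqref{eq-is}, which cancels the leading $\grd^2 f(\Bprm^t)$ contribution by consensus (the linearity $\frac1n\sum_i\grd^2 f(\Bprm^t)\prm_i^t=\grd^2 f(\Bprm^t)\Bprm^t$) and leaves only the genuinely small pieces governed by $\varsigma_H$ and $L_H$. Keeping careful track of which term deserves the sharp bound (the ${\cal O}(\gamma_{t+1})$ cross term) versus the crude one (the ${\cal O}(\gamma_{t+1}^2)$ second moment) is what makes the perturbation in \eqref{eq:ba} decay at the improved rate; the bookkeeping of constants and the verification that every $\norm{\Tprm^t}^2$ term is absorbable under the stated step size is routine but must be done with care.
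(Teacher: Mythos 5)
Your proposal follows essentially the same route as the paper's proof: expand the squared distance, apply strong convexity to the $\grd f(\Bprm^t)$ part of the cross term, bound the residual $\mathbf{e}^t=\frac{1}{n}\sum_i[\grd f_i(\prm_i^t)-\grd f_i(\Bprm^t)]$ via the Taylor identity \eqref{eq-is} together with \eqref{eq:high-order-bd-1st}, dispose of it by Young's inequality, and absorb the ${\cal O}(\gamma_{t+1}^2)\norm{\Tprm^t}^2$ contribution from the second-moment bound using the step-size condition. The only discrepancy is a harmless constant: your Young split $\frac{\mu}{2}\gamma_{t+1}\norm{\Tprm^t}^2+\frac{2\gamma_{t+1}}{\mu}\norm{\mathbf{e}^t}^2$ yields $\frac{\gamma_{t+1}}{n\mu}\big(\frac{L_H^2}{n}\norm{\CSE{t}}_F^4+4\varsigma_H^2\norm{\CSE{t}}_F^2\big)$ rather than the stated $\frac{\gamma_{t+1}}{2n\mu}(\cdot)$; taking $\mu\gamma_{t+1}\norm{\Tprm^t}^2+\frac{\gamma_{t+1}}{\mu}\norm{\mathbf{e}^t}^2$ as the paper does recovers the exact constants while the extra $\mu\gamma_{t+1}\norm{\Tprm^t}^2$ is still absorbed by the $-2\mu\gamma_{t+1}$ gain.
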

\noindent See the proof in Appendix~\ref{app:cvx2}.

We note the last row gathers the \emph{perturbation} terms due to the consensus error. The first term depends on $\gamma_{t+1}^2$ such that it can be controlled by making the step size small; the second term is proportional to $\varsigma_H^2 \norm{\CSE{t}}_F^2 + \frac{L_H^2}{n} \norm{\CSE{t}}_F^4$, where the former term vanishes as $\varsigma_H \to 0$ and the latter term is anticipated to decay quickly w.r.t.~consensus error. 

Our next step is to observe the following bounds on the consensus error:
\begin{lemma}\label{lem:ceb-scvx} Under A\ref{ass:smooth},\ref{ass: graph},\ref{ass:SecOrdMom}[$\sigma_0 = \sigma_1 = \sigma$]. If $\sup_{t\geq 0} \gamma_{t}\leq {\rho}/{\sqrt{8(\sigma^2 + 2L^2)}},$ then for any $t\geq 0$, it holds that 
    \begin{eqnarray}
        \begin{aligned}
            &\EE_t \norm{\CSE{t+1}}_F^2 
            \leq (1-\frac{\rho}{2})\norm{\CSE{t}}_F^2 
            \\
            & + \frac{ 2n\gamma_{t+1}^2 }{\rho} \Big[(\sigma^2 + \varsigma^2) +2\sigma^2 \normtxt{\Tprm^t}^2\Big].
        \end{aligned}
    \end{eqnarray}
    If in addition A\ref{ass:SecOrdMom2}[$\bar{\sigma}_{0}= \bar{\sigma}_1 = \bar{\sigma}$] holds and $\sup_{k\geq 1}\gamma_{k} \leq {\rho}/{\sqrt[4]{ c_3}}$, where $c_3\eqdef 864 n (\bar{\sigma}^4 + 8 L^4)$.
    Then, it holds 
    \begin{eqnarray}
       \begin{aligned}
            &\EE_t \norm{\CSE{t+1}}^4_F 
            \leq (1-\frac{\rho}{2})\norm{\CSE{t}}_F^4 
            \\
            & + \frac{ 54 n^2 \gamma_{t+1}^4}{ \rho^3 }  \left[ \bar{\sigma}^4 + 4\varsigma^4 +8\bar{\sigma}^4 {\normtxt{\Tprm^t}^4} \right] 
        \end{aligned}
    \end{eqnarray}
\end{lemma}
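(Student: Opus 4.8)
The plan is to establish the two recursions for $\EE_t\|\CSE{t+1}\|_F^2$ and $\EE_t\|\CSE{t+1}\|_F^4$ by starting from the consensus-error dynamics, expanding the square (resp.\ fourth power), and controlling the resulting cross terms using the contraction from A\ref{ass: graph} together with A\ref{ass:smooth} and the growth-type moment bounds A\ref{ass:SecOrdMom}/A\ref{ass:SecOrdMom2}. The essential difference from the non-convex Lemma~\ref{lem:ceb-ncvx} is that here $\sigma_1 = \sigma$ (not $0$), so the stochastic-gradient variance carries an extra term proportional to $\|\prm-\prm^\star\|^2$; this is exactly why the final bounds acquire the additive $\normtxt{\Tprm^t}^2$ and $\normtxt{\Tprm^t}^4$ contributions that were absent before. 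I would carry out the second-moment and fourth-moment cases in parallel, since they share the same structural decomposition.

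\textbf{Second moment.} First I would write the update for $\CSE{t+1}$ in matrix form using \eqref{eq:dsgd}: projecting \eqref{eq:dsgd} onto the consensus-orthogonal subspace gives $\CSE{t+1} = \bm{W}\CSE{t}\,\text{(up to the projection)} - \gamma_{t+1}\,\text{(centered gradient matrix)}$, where the all-ones component is annihilated by $\bm U$. Taking $\EE_t[\cdot]$ and using unbiasedness, I would split into the ``mean'' part and the ``noise'' part. For the mean part I would apply the spectral contraction $\|\bm U^\top\bm W\bm U\|\le 1-\rho$ together with Young's inequality with a parameter tuned to $\rho$, producing the factor $(1-\rho/2)$; the deterministic gradient displacement contributes $\gamma_{t+1}^2$ times a bound of the form $\|\grd f_i(\prm_i^t)\|^2$, which via A\ref{ass:smooth} and the identity $\|\grd f_i(\prm)\|^2 \lesssim L^2\|\prm-\prm^\star\|^2 + \|\grd f_i(\prm^\star)\|^2$ and A\ref{ass:smooth}-\ref{item:hete} yields the $(\sigma^2+\varsigma^2) + \sigma^2\normtxt{\Tprm^t}^2$ structure. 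For the noise part I would use the conditional-variance bound A\ref{ass:SecOrdMom} with $\sigma_1=\sigma$, which directly supplies $\sigma_0^2 + \sigma_1^2\|\prm_i^t-\prm^\star\|^2$; after averaging over $i$ and relating $\prm_i^t$ to $\Bprm^t$ (hence to $\Tprm^t$ and $\CSE{t}$) I would absorb the consensus-error piece into the contraction term and keep $\normtxt{\Tprm^t}^2$ explicit. The step-size condition $\sup_t\gamma_t \le \rho/\sqrt{8(\sigma^2+2L^2)}$ is precisely what is needed so that the $\gamma_{t+1}^2$ self-term coming from the gradient displacement can be merged into the $(1-\rho/2)$ factor.

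\textbf{Fourth moment.} The same decomposition applies, but now I must expand $\|\CSE{t+1}\|_F^4 = (\|\CSE{t+1}\|_F^2)^2$. After conditioning, I would bound the square of the second-moment recursion and then carefully handle the cross term between the contracted $\|\CSE{t}\|_F^2$ piece and the noise piece. Here A\ref{ass:SecOrdMom2}[$\bar\sigma_0=\bar\sigma_1=\bar\sigma$] supplies the fourth-moment control $\bar\sigma^4 + \bar\sigma^4\|\prm_i^t-\prm^\star\|^4$, which after the usual $\|a+b\|^4 \le 8\|a\|^4 + 8\|b\|^4$-type inequalities and the Hölder/Young steps produces the $\bar\sigma^4 + 4\varsigma^4 + 8\bar\sigma^4\normtxt{\Tprm^t}^4$ bracket; the constant $54$ and the $\rho^3$ denominator track exactly as in the non-convex version, and the more stringent step-size requirement $\gamma_k\le\rho/\sqrt[4]{c_3}$ with $c_3 = 864n(\bar\sigma^4+8L^4)$ is what allows the various $\gamma_{t+1}^4$ cross terms to be folded back into the $(1-\rho/2)$ contraction.

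\textbf{The main obstacle} will be the fourth-moment case, specifically controlling the cross terms generated by squaring the one-step recursion without losing the contraction factor. One must choose the Young-inequality splitting parameters so that every term proportional to $\|\CSE{t}\|_F^4$ collects to a coefficient bounded by $1-\rho/2$ under the stated step-size cap, while simultaneously ensuring that the $\normtxt{\Tprm^t}^4$ dependence stays linear in $\bar\sigma^4$ (rather than accumulating spurious $L^4\normtxt{\Tprm^t}^4$ terms that would break the later Lyapunov argument). Getting the dependence on $\normtxt{\Tprm^t}^2,\normtxt{\Tprm^t}^4$ \emph{separated cleanly} from the pure-noise constants is the delicate bookkeeping, since it is precisely this coupling between the consensus error and the optimality gap that forces the probabilistic (high-probability) statement of Theorem~\ref{thm2}.
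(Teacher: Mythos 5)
Your overall route is the same as the paper's: project the update onto the consensus-orthogonal subspace to get $\CSE{t+1}={\bm U}({\bm U}^\top{\bm W}{\bm U}){\bm U}^\top\Prm^t-\gamma_{t+1}{\bm U}{\bm U}^\top\Tgrd F^t$, apply Young's inequality with parameter $\rho/(1-\rho)$ (once for the second moment, twice for the fourth moment, i.e., square the pathwise one-step bound before taking $\EE_t$, which is what your fourth-moment paragraph amounts to), and then bound the conditional moments of ${\bm U}{\bm U}^\top\Tgrd F^t$ by splitting each row into the gradient noise, the centered mean gradients, and the averaged noise, with the $(a+b+c)^4\leq 27(a^4+b^4+c^4)$ and Cauchy--Schwarz steps handling the fourth-order case. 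You also correctly locate the origin of the $\normtxt{\Tprm^t}^2$ and $\normtxt{\Tprm^t}^4$ terms in the nonzero growth constants $\sigma_1=\sigma$, $\bar\sigma_1=\bar\sigma$ of A\ref{ass:SecOrdMom}, A\ref{ass:SecOrdMom2}.

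The one step that would fail as written is your treatment of the deterministic part of the second-moment bound. You propose to control it through the uncentered quantity $\|\grd f_i(\prm_i^t)\|^2\lesssim L^2\|\prm_i^t-\prm^\star\|^2+\|\grd f_i(\prm^\star)\|^2$, thereby discarding the centering effected by ${\bm U}{\bm U}^\top$. That route necessarily injects a term of order $L^2\normtxt{\Tprm^t}^2$ into the bracket, whereas the lemma's bracket carries only $2\sigma^2\normtxt{\Tprm^t}^2$; you would therefore prove a strictly weaker inequality than the one stated (and the perturbation fed back into the Lyapunov recursion of Theorem~\ref{thm2} would then scale with $L^2$ rather than $\sigma^2$, altering the constants and step-size conditions, though not breaking the argument qualitatively). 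The paper instead keeps the centered quantity $\sum_{i}\|\grd f_i^t-\frac{1}{n}\sum_{j}\grd f_j^t\|^2$ and bounds it by comparing each $\grd f_j(\prm_j^t)$ to $\grd f_j(\prm_i^t)$ via A\ref{ass:smooth}-\ref{item:lips} and invoking A\ref{ass:smooth}-\ref{item:hete} at the common point $\prm_i^t$, which yields $2n\varsigma^2+4L^2\|\CSE{t}\|_F^2$ with no dependence on $\prm^\star$ at all; the $\|\CSE{t}\|_F^2$ part is then absorbed into the contraction, which is exactly where the cap $\gamma_t\leq\rho/\sqrt{8(\sigma^2+2L^2)}$ is spent. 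The $\normtxt{\Tprm^t}^2$ term must come solely from the noise variance, as the (correct) second half of your second-moment paragraph already supplies. The same remark applies to the fourth moment: the centered mean-gradient contribution is $8n\varsigma^4+128L^4\|\CSE{t}\|_F^4$ with no $\normtxt{\Tprm^t}^4$, so the only $\normtxt{\Tprm^t}^4$ term is the $8\bar\sigma^4\normtxt{\Tprm^t}^4$ from A\ref{ass:SecOrdMom2} --- precisely the ``no spurious $L^4\normtxt{\Tprm^t}^4$'' property you identified as essential but whose mechanism (centering, not Young-parameter tuning) your sketch does not supply.
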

\noindent The proof is in Appendix \ref{app:unif-cons-bd}. Note that the above differs from Lemma~\ref{lem:ceb-ncvx} only through the growth condition with non-zero $\sigma_1, \bar{\sigma}_1$ on the second/fourth order moment of the stochastic oracle [cf.~A\ref{ass:SecOrdMom} \& \ref{ass:SecOrdMom2}].

At this point, we may proceed by analyzing the following Lyapunov function
\beq \label{eq:lyap-fct-main}
\begin{aligned}
    {\tt L}_{t} & \eqdef \EE\Big[ \normtxt{\Tprm^{t}}^2 
        + \frac{4 c_{1}}{n \rho}\gamma_{t}^2 \normtxt{\CSE{t}}_F^2 \\
        & \qquad + \frac{8\varsigma_{H}^2}{n \mu \rho} \gamma_{t}\norm{\CSE{t}}_F^2 + \frac{2L_H^2}{\mu\rho n^2}\gamma_{t}  \norm{\CSE{t}}_F^4 \Big].
\end{aligned} 
\eeq 
Unlike the proof of Theorem~\ref{thm2-noncvx}, we observe that directly combining Lemma~\ref{lem:des2-scvx}, \ref{lem:ceb-scvx} does not lead to the desired bound. In particular, for any $t \geq 0$ {and sufficient small $\gamma_{t}$}, it holds that
\beq \label{eq:lt}
\begin{aligned}
\EE_t [{\tt L}_{t+1}] & \leq \left(1 - \gamma_{t+1} \frac{\mu}{4} + \gamma_{t+1}^5 {\frac{ 864 L_H^2 \bar{\sigma}^4 }{ \mu \rho^4} } \| \Tprm^t \|^2 \right) {\tt L}_t \\
& + \frac{2\sigma^2}{n}\gamma_{t+1}^2 + {\rm D}\varsigma_H^2 \gamma_{t+1}^3 + {\rm E}\gamma_{t+1}^4 + {\rm F}\gamma_{t+1}^5 \eqsp,
\end{aligned} 
\eeq   
where ${\rm D}, {\rm E}, {\rm F}$ are constants defined as
\beq 
\label{eq:constant_scvx_def}
\begin{aligned}
    {\rm D} &\eqdef \frac{ 16(\sigma^2 + \varsigma^2)}{\mu \rho^2}, \quad {\rm E} \eqdef \frac{96(\sigma^2+\varsigma^2)(\sigma^2+L^2)}{\rho^2}, \\
    {\rm F} &\eqdef \frac{108(\bar{\sigma}^4 +4\varsigma^4)L_{H}^2}{\mu\rho^4}.
\end{aligned}
\eeq 

The main challenge for analyzing \eqref{eq:lt} is that the bound on the RHS involves a fourth order term, $\| \Tprm^t \|^4$, as we recall that ${\tt L}_t$ contains $\| \Tprm^t \|^2$. Our remedy is to obtain a high probability bound for the random variable $\| \Tprm^t \|^2$ and use it to control the contraction in the first term of \eqref{eq:lt}. Note that such bound does not need to be tight as $\| \Tprm^t \|^2$ since it will be controlled by the 5-th order step size term $\gamma_{t+1}^5$. To this end, we can easily derive such bound using Corollary~\ref{cor:1} and Markov inequality:

\begin{lemma}\label{lem:hp-bnd}
Assume A\ref{ass:smooth}-\ref{ass:SecOrdMom}[$\sigma_0=\sigma_1=\sigma$] and the step size $\gamma_{t}=a_{0}/(a_1 + t)$. Then, for any $\tdelta>0,$ it holds
\beq\label{eq:hp-bnd}
\normtxt{\Tprm^{t}}^2 \leq (\tdelta\gamma_{t})^{-1} {{\rm C}},\quad \forall t \geq \max\{t_0, t_1\},
\eeq
    with probability at least $1-\tdelta a_0^2/a_1$, where $a_0, a_1\in \RR_{+}$, ${\rm C} \eqdef {32\sigma^2}/{(n\mu)}$ and $t_0, t_1$ are defined in \eqref{eq:t0} and \eqref{eq:t1}.
\end{lemma}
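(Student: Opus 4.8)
The plan is to lift the in-expectation guarantee of Corollary~\ref{cor:2} to a uniform-in-time, high-probability bound by pairing Markov's inequality with a union bound whose failure probabilities are summable. First I would invoke Corollary~\ref{cor:2}: under A\ref{ass:smooth},\ref{ass:str_cvx},\ref{ass: graph},\ref{ass:SecOrdMom}[$\sigma_0=\sigma_1=\sigma$] and $\gamma_t=a_0/(a_1+t)$ it yields $\EE[\normtxt{\Tprm^t}^2]\leq \frac{\sigma^2}{n\mu}\gamma_t+{\cal O}(\gamma_t^2)$, where $\Tprm^t=\Bprm^t-\thstr$. Because $\gamma_t$ is decreasing, there is an index $t_1$ past which $\gamma_t$ meets the step-size prerequisites of Corollary~\ref{cor:2}, and an index $t_0$ past which the ${\cal O}(\gamma_t^2)$ remainder is dominated by the leading ${\cal O}(\gamma_t)$ term; these are precisely the quantities pinned down in \eqref{eq:t0}--\eqref{eq:t1}. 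Hence for every $t\geq\max\{t_0,t_1\}$ I get the clean estimate $\EE[\normtxt{\Tprm^t}^2]\leq \frac{2\sigma^2}{n\mu}\gamma_t$, equivalently $\gamma_t\,\EE[\normtxt{\Tprm^t}^2]\leq \frac{1}{16}{\rm C}\,\gamma_t^2$ with ${\rm C}=32\sigma^2/(n\mu)$.

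Next I would apply Markov's inequality to the nonnegative variable $\gamma_t\normtxt{\Tprm^t}^2$ at each fixed $t$, giving $\Pr[\gamma_t\normtxt{\Tprm^t}^2>{\rm C}/\tdelta]\leq \frac{\tdelta}{{\rm C}}\gamma_t\,\EE[\normtxt{\Tprm^t}^2]\leq \frac{\tdelta}{16}\gamma_t^2$. A union bound over all $t\geq\max\{t_0,t_1\}$, together with the telescoping estimate $\sum_{t\geq1}\gamma_t^2=a_0^2\sum_{t\geq1}(a_1+t)^{-2}\leq a_0^2\sum_{t\geq1}\big(\frac{1}{a_1+t-1}-\frac{1}{a_1+t}\big)\leq a_0^2/a_1$, then gives $\Pr[\exists\,t\geq\max\{t_0,t_1\}:\gamma_t\normtxt{\Tprm^t}^2>{\rm C}/\tdelta]\leq \frac{\tdelta}{16}\frac{a_0^2}{a_1}\leq \tdelta a_0^2/a_1$. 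Passing to the complementary event delivers \eqref{eq:hp-bnd}.

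The main obstacle --- and the reason the stated failure probability carries the unusual factor $a_0^2/a_1$ --- is that \eqref{eq:hp-bnd} must hold \emph{simultaneously} for all large $t$, not merely at one fixed iteration, so a naive union bound over infinitely many indices could diverge. The decisive structural fact is that the Markov tail scales as $\gamma_t^2\asymp(a_1+t)^{-2}$, whose series converges and telescopes to at most $1/a_1$; multiplying by $a_0^2$ reproduces exactly the target factor. The remaining care is purely numerical: one checks that the constant $32$ in ${\rm C}$ leaves enough slack to absorb the doubling in the expectation bound and the $1/16$ from the telescoping, which it comfortably does. I do not expect tightness of ${\rm C}$ to matter, since, as the text notes, this crude bound is only used to tame the fifth-order step-size term $\gamma_{t+1}^5\normtxt{\Tprm^t}^2$ appearing in the contraction factor of \eqref{eq:lt}.
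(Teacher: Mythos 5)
Your proposal is correct and follows essentially the same route as the paper's proof: obtain a clean ${\cal O}(\gamma_t)$ expectation bound from Corollary~\ref{cor:2} for $t\geq\max\{t_0,t_1\}$, apply Markov's inequality at each $t$ with threshold ${\rm C}/(\tdelta\gamma_t)$, and close with a union bound using $\sum_{t\geq 1}\gamma_t^2\leq a_0^2/a_1$ (the paper bounds this sum by an integral comparison rather than your telescoping, and works with the looser constant ${\rm C}=32\sigma^2/(n\mu)$ directly, but these are immaterial). No gaps.
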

\noindent See the proof in Appendix~\ref{app:hp-bnd}.

Using (\ref{eq:hp-bnd}), we observe that $\gamma_{t+1}^5 \| \Tprm^t \|^2 = {\cal O}(\gamma_{t}^4)$ for any finite $t$ w.h.p.. Choosing a proper step size immediately lead the bracket term in \eqref{eq:lt} to be strictly smaller than $1$, i.e., it contracts. Applying Lemma \ref{lem:hp-bnd} with the step size condition 
\[\textstyle
    {\gamma_{t+1}}/{\gamma_{t}} \leq \min\bigg\{ \sqrt{1+(\mu/4)\gamma_{t}^{p}}, \forall p \in \{2,3,4,5\}\bigg\},~ t\geq 1
\]
leads to Theorem \ref{thm2}; see details in Appendix \ref{app:thm2}.

\section{Application: Decentralized TD(0) Learning}\label{sec:application}
This section studies the policy evaluation problem in multi-agent reinforcement learning (RL) via the decentralized {\td} algorithm \cite{doan2019finite} as an application of our tightened analysis for decentralized stochastic algorithms. 
By recognizing that \cite{doan2019finite} for multi-agent Markov Decision Process (MDP) shares many similarities with {\algoname} taking homogeneous data, we show that the existing convergence analysis for decentralized {\td} in \cite{doan2019finite} can be improved. For the mean squared error in the value function estimation, the convergence rate can be accelerated from ${\cal O}(\log t/ (\rho t))$ to ${\cal O}(1/t)$ with a transient time of \emph{zero} under the same step size rule.
\vspace{.1cm}

\noindent \textbf{Policy Evaluation.} We briefly review the setup for policy evaluation of multi-agent MDP. Consider a network of $n$ agents where agents collaborate under private local rewards and global state-action pairs. We consider the multi-agent MDP described by $(\mathcal{S}, \{\src{A}_{i}\}_{i=1}^{n}, \src{P}^{a}, \{r_{i}\}_{i=1}^{n}, \gamma)$, where  $\src{S}$, $|\src{S}| < \infty$ is a finite state space and $\src{A}_i$, $|\src{A}_i| < \infty$ is a finite action space for agent $i$. The matrix ${\cal P}^{a}\in \RR^{|{\cal S}|\times |{\cal S}|}$ is the state transition probabilities under a joint action ${a} \in {\cal A} := {\cal A}_{1} \times \cdots \times {\cal A}_{n}$. The local reward obtained by agent $i$ after taking joint action ${a}$ at state $s$ is $r_{i} (s, a)$, where $r_i : {\cal S} \times {\cal A} \to \RR_+$ is measurable.
The local reward is private and is known for agent $i$, while both system state $s$ and joint action ${a}$ are {observed} by all agents. Lastly, a policy $\pi$ is defined through the conditional probability $\pi( {a} |s)$. The reward function is defined as average of the local rewards: at state $s$, we have $R(s) \eqdef \frac{1}{n}\sum_{i=1}^{n}R_{i}(s) = \frac{1}{n} \sum_{i=1}^n \EE_{ {A} \sim \pi(\cdot |s)}[r_i( s,A )]$.

Policy evaluation is a common problem in RL. The aim is to compute the value function $V_\pi :\src{S}\rightarrow \RR$, defined to be the average discounted reward generated by $\pi$, i.e., let $\gamma \in (0,1)$ be the discount factor,
\beq \label{eq:vpis}
V_\pi (s) \eqdef \textstyle \EE_{\pi} \left[ \sum_{t=0}^{\infty} \gamma^{t+1} {R}(S_t) | S_0 = s\right],
\eeq 
where $\EE_{\pi} [\cdot]$ is the expectation taken over the MDP trajectory generated by the policy $\pi$.
Instead of evaluating \eqref{eq:vpis}, a popular formulation is to adopt linear function approximation. We consider a parametric family of linear functions $\{\phi^\top(s)\prm: \prm\in \RR^d\}$, where $\prm$ is the parameter and $\phi: {\cal S}\rightarrow \RR^d$ is a feature map.
Our goal is then to find $\prm^\star_{i}$, $i\in [n]$ such that $\phi^\top (s) \Bprm^\star \approx V_\pi(s)$ for all $s \in \src{S}$.

The decentralized {\td} algorithm in \cite{doan2019finite} is a natural extension for the {\td} algorithm \cite{sutton1988learning}. At time $t$, agent $i$ observes the tuple $\zeta_{i}^{t+1} \eqdef (S_t, A_t, S_{t}^+)$ and the local reward $r_i( S_t, A_t )$. The value function parameter is updated by
\beq \label{eq:dtd0}
    \prm_i^{t+1} = \sum_{j=1}^{n} {\bm W}_{ij}\prm_j^t + \alpha_{t+1} \, g(\prm_i^{t+1}, \zeta_{i}^{t+1}),
\eeq 
where $\alpha_{t+1} > 0$ is the step size, and
\beq 
\begin{aligned} 
& g(\prm_i^{t+1}, \zeta_{i}^{t+1}) \eqdef \phi(S_t) \delta(\prm_i^{t+1}, \zeta_{i}^{t+1}) \\
& \delta(\prm_i^{t}, \zeta_i^{t+1}) \eqdef r_i ( S_t, A_t ) + ( \gamma \phi(S_t^+) - \phi(S_{t}) )^\top \prm_i^t.
\end{aligned}
\eeq 
It is known \cite[Theorem 2]{doan2019finite} that $\EE[ \| \Bprm^t - \prm^\star \|^2 ] = {\cal O}( \log t / (\rho t ) )$ under standard conditions such as $[\phi(s)]_{s \in {\cal S}}$ is full-rank, where $\prm^\star$ is the fixed point to the projected Bellman equation. 

\subsection{Convergence Analysis}
We analyze the decentralized {\td} algorithm \eqref{eq:dtd0} under a simplified setting with \emph{independent samples}. We assume
\begin{Assumption} \label{ass:mdp-0}
For all $t \geq 0$, the tuple $\zeta^{t+1} = (S_t, A_t, S_t^+ )$ is drawn i.i.d.~following the unique stationary distribution, $\mu_\pi$, of a Markov chain induced by $\pi$ on the multi-agent MDP. In particular, one has $S_t \sim \mu_\pi$, $A_t \sim \pi( \cdot | S_t )$, $S_{t}^+ \sim {\cal P}^{A_t}( S_t, \cdot )$.
Moreover, the same tuple $\zeta^{t+1}$ is observed by all agents.
\end{Assumption}
\noindent The assumption is common for algorithms that use a replay buffer, e.g., \cite{di2022analysis}, in lieu of using a single trajectory of samples. This is called the \emph{global state model} in \cite{doan2019finite, liu2021distributed}. 

To simplify our notations, we define the quantities:
\beq \notag
\begin{aligned}
    &{\bm A}(\zeta^{t+1}) \eqdef  \phi(S_t)[ \phi^\top(S_t) - \gamma \phi^\top(S_{t}^+)],
    \\
    &{\bm b}_i(\zeta^{t+1}) \eqdef r_i ( S_t, A_t ) \phi(S_t), ~~ \bar{\bm b}(\zeta^{t+1}) \eqdef \frac{1}{n} \sum_{i=1}^{n} {\bm b}_i (\zeta^{t+1}),
    \\
    &{\bm A} \eqdef \EE_{\mu_\pi}[{\bm A}(\zeta)],
    ~~ \bar{\bm b} \eqdef \EE_{\mu_\pi}[ \bar{\bm b}(\zeta)].
\end{aligned}
\eeq 
The averaged iterate update is formulated as
\begin{align*}
    \Bprm^{t+1} = \Bprm^{t} + \alpha_{t+1} \left[ \bar{\bm b}(\zeta^{t+1}) -{\bm A}(\zeta^{t+1}) \Bprm^t
    \right].
\end{align*}
As seen from the above update, the decentralized {\td} admits a structure that is analogous to {\algoname} with $\varsigma_{H}=0$. Lastly, we assume
\begin{Assumption}\label{ass:td}
For any $i \in [n], s \in {\cal S}, a \in {\cal A}$, it holds $\norm{\phi(s)} \leq 1, |r_i(s,a)| \leq \rmax{}$. The matrix ${\bm A}$ is full rank and Hurwitz. 
\end{Assumption}
\noindent The assumption is again standard for {\td} learning \cite{bhandari2018finite,dalal2018finite,srikant2019finite}.
As a consequence of A\ref{ass:td}, the matrix ${\bm A}$ is positive definite and we define $\lambdamin, \lambdamax$ as the minimum, maximum eigenvalue of $\frac{ {\bm A} + {\bm A}^\top }{2}$, respectively. We also observe that:
\beqq 
\begin{aligned}
    & \norm{ {\bm A}(\zeta) } \leq 1+\gamma =: \beta,~\forall~\zeta \in {\cal S} \times {\cal A} \times {\cal S},
    \\
    & \EE_{\mu_\pi} [\norm{ \bar{\bm b} - \bar{\bm b}(\zeta) }^2] \leq \frac{2 \rmax{2}}{n} =: \frac{\sigma^2}{n}.
\end{aligned}
\eeqq 
We proceed by observing the descent lemma for \eqref{eq:dtd0}:
\begin{lemma}\label{lem:td_des}
    Under A\ref{ass:mdp-0}, A\ref{ass:td}, if the step size $\alpha_{t}$ satisfies $\sup_{t \geq 1} \alpha_t \leq \lambdamin / \beta^2$, then, it holds for any $t\geq 1$ that,
    \begin{align*}
    \EE_{t} \| \Bprm^{t+1} - \prm^\star \|^2 & \leq \left( 1 - \alpha_{t+1} \lambdamin  \right) \| \Bprm^t-\prm^\star \|^2 \\
    & + \alpha_{t+1}^2 \left( \frac{ \sigma^2 }{n} + 4 \beta^2 \| \prm^\star \|^2 \right) 
    \end{align*}
\end{lemma}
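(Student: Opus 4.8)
The plan is to follow the standard one-step contraction analysis for linear stochastic approximation, specialized to the averaged {\td} recursion and exploiting the i.i.d.~sampling in A\ref{ass:mdp-0}. The starting point is the fixed-point identity ${\bm A}\prm^\star = \bar{\bm b}$, which is well defined because ${\bm A}$ is full rank under A\ref{ass:td}. Writing the error as $\Bprm^t - \prm^\star$ and the stochastic increment as ${\bm g}^{t+1} := \bar{\bm b}(\zeta^{t+1}) - {\bm A}(\zeta^{t+1})\Bprm^t$, the averaged update becomes $\Bprm^{t+1} - \prm^\star = (\Bprm^t - \prm^\star) + \alpha_{t+1}{\bm g}^{t+1}$. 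Since $\zeta^{t+1} \sim \mu_\pi$ is drawn independently of the filtration, taking $\EE_t$ and using ${\bm A}\prm^\star = \bar{\bm b}$ gives $\EE_t[{\bm g}^{t+1}] = \bar{\bm b} - {\bm A}\Bprm^t = -{\bm A}(\Bprm^t - \prm^\star)$. Expanding the square then yields
\begin{align*}
\EE_t\|\Bprm^{t+1} - \prm^\star\|^2 = \|\Bprm^t - \prm^\star\|^2 - 2\alpha_{t+1}\pscal{\Bprm^t - \prm^\star}{{\bm A}(\Bprm^t - \prm^\star)} + \alpha_{t+1}^2\,\EE_t\|{\bm g}^{t+1}\|^2.
\end{align*}

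For the linear term I would invoke that ${\bm A}$ is positive definite on its symmetric part: $\pscal{x}{{\bm A}x} = x^\top \tfrac{{\bm A}+{\bm A}^\top}{2}x \geq \lambdamin\|x\|^2$, so this term contributes at most $-2\alpha_{t+1}\lambdamin\|\Bprm^t - \prm^\star\|^2$. The crux is the second-moment term $\EE_t\|{\bm g}^{t+1}\|^2$. Centering the increment at the fixed point, ${\bm g}^{t+1} = \big(\bar{\bm b}(\zeta^{t+1}) - {\bm A}(\zeta^{t+1})\prm^\star\big) - {\bm A}(\zeta^{t+1})(\Bprm^t - \prm^\star)$, I would separate a quadratic-in-error part, controlled through $\|{\bm A}(\zeta)\| \leq \beta$ by a term of order $\beta^2\|\Bprm^t - \prm^\star\|^2$, from the fixed-point noise $\EE_t\|\bar{\bm b}(\zeta) - {\bm A}(\zeta)\prm^\star\|^2$. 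The latter is bounded by centering (its conditional mean is zero) and combining the reward-variance estimate $\EE_{\mu_\pi}\|\bar{\bm b} - \bar{\bm b}(\zeta)\|^2 \leq \sigma^2/n$ with $\|{\bm A}(\zeta) - {\bm A}\| \leq 2\beta$ applied to $\prm^\star$, which produces the constant $\tfrac{\sigma^2}{n} + 4\beta^2\|\prm^\star\|^2$.

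Collecting these pieces gives a bound of the form $\EE_t\|\Bprm^{t+1} - \prm^\star\|^2 \leq \big(1 - 2\alpha_{t+1}\lambdamin + \alpha_{t+1}^2\beta^2\big)\|\Bprm^t - \prm^\star\|^2 + \alpha_{t+1}^2\big(\tfrac{\sigma^2}{n} + 4\beta^2\|\prm^\star\|^2\big)$, and the last step invokes the step-size restriction $\sup_t \alpha_t \leq \lambdamin/\beta^2$: it forces $\alpha_{t+1}^2\beta^2 \leq \alpha_{t+1}\lambdamin$, so that $1 - 2\alpha_{t+1}\lambdamin + \alpha_{t+1}^2\beta^2 \leq 1 - \alpha_{t+1}\lambdamin$, exactly the claimed contraction factor. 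I expect the main obstacle to be the second-moment bound rather than the contraction bookkeeping: because the injected noise ${\bm A}(\zeta)\Bprm^t$ multiplies the full iterate, its conditional variance genuinely depends on $\Bprm^t$, so the iterate-dependent remainder must be isolated sharply (with effective coefficient no larger than $\beta^2$ after combining it with the deterministic drift $\|{\bm A}(\Bprm^t-\prm^\star)\|^2$) in order for the single condition $\alpha \leq \lambdamin/\beta^2$ to absorb it without inflating the $\tfrac{\sigma^2}{n} + 4\beta^2\|\prm^\star\|^2$ noise constant. The averaging over the $n$ agents is what yields the $\sigma^2/n$ scaling, reflecting the homogeneous-data structure emphasized throughout.
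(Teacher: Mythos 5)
Your proposal is correct and follows essentially the same route as the paper's proof in Appendix~\ref{app:td0}: expand the square, lower-bound the cross term by $\lambdamin\|\Bprm^t-\prm^\star\|^2$ via the symmetric part of ${\bm A}$, split the conditional second moment into the fixed-point noise $\tfrac{\sigma^2}{n}+4\beta^2\|\prm^\star\|^2$ plus an iterate-dependent $\beta^2\|\Bprm^t-\prm^\star\|^2$ term, and absorb the latter using $\alpha_{t+1}\le\lambdamin/\beta^2$. The constants and the final contraction factor match the paper exactly.
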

\noindent The proof can be found in Appendix~\ref{app:td0}.
Solving the recursion in the above lemma yields:
\begin{theorem}\label{thm-td} 
    Under A\ref{ass:mdp-0}, A\ref{ass:td}. Suppose that the step size satisfies,
    \beqq 
        \frac{\alpha_t}{\alpha_{t+1}} \leq \sqrt{1+ (\lambdamin/2) \alpha_{t+1}} ,~~\alpha_t \leq \frac{ \lambdamin }{ \beta^2 },~\forall~t \geq 1.
    \eeqq 
    Then, for any $t\geq 1$, it holds that
    \begin{align*}
        \EE\|\Bprm^{t+1} - \prm^\star \|^2 & \textstyle \leq \prod_{i=1}^{t+1} \left(1-{\lambdamin}\alpha_{i}\right) \| \Bprm^{0} - \prm^\star \|^2  
        \\
        &\quad + \frac{2}{\lambdamin} \left( \frac{ \sigma^2 }{n} + 4 \beta^2 \| \prm^\star \|^2 \right) \alpha_{t+1}.
    \end{align*}
\end{theorem}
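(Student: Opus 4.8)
The plan is to solve the one-step recursion from Lemma~\ref{lem:td_des} by unrolling it into a closed-form bound. Writing $e_t := \EE\|\Bprm^t - \prm^\star\|^2$ and $c := \tfrac{\sigma^2}{n} + 4\beta^2\|\prm^\star\|^2$, Lemma~\ref{lem:td_des} (after taking total expectation) gives $e_{t+1} \leq (1-\lambdamin \alpha_{t+1}) e_t + c\,\alpha_{t+1}^2$ for all $t\geq 0$. Iterating this from $t$ down to $0$ yields
\begin{align*}
e_{t+1} \leq \Big(\prod_{i=1}^{t+1}(1-\lambdamin\alpha_i)\Big) e_0 + c\sum_{k=0}^{t}\alpha_{k+1}^2 \prod_{i=k+2}^{t+1}(1-\lambdamin\alpha_i),
\end{align*}
so the first (transient) term already matches the claimed bound verbatim. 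The entire task reduces to showing that the accumulated noise sum is dominated by $\tfrac{2}{\lambdamin} c\,\alpha_{t+1}$.

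The key technical step, which I expect to be the main obstacle, is controlling the weighted sum $S_t := \sum_{k=0}^{t}\alpha_{k+1}^2 \prod_{i=k+2}^{t+1}(1-\lambdamin\alpha_i)$. The natural approach is an inductive argument: I would prove by induction on $t$ that $S_t \leq \tfrac{2}{\lambdamin}\alpha_{t+1}$. The inductive step requires bounding
\begin{align*}
S_{t} = (1-\lambdamin\alpha_{t+1}) S_{t-1} + \alpha_{t+1}^2 \leq (1-\lambdamin\alpha_{t+1})\tfrac{2}{\lambdamin}\alpha_t + \alpha_{t+1}^2,
\end{align*}
and it remains to verify that the right-hand side is at most $\tfrac{2}{\lambdamin}\alpha_{t+1}$. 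This is exactly where the two step-size hypotheses enter. The condition $\alpha_{t+1}\leq \lambdamin/\beta^2$ is needed so that each contraction factor $1-\lambdamin\alpha_{t+1}$ stays in $(0,1)$ (recall $\beta^2 \geq \lambdamin$ since $\beta = 1+\gamma \geq 1$ while $\lambdamin \leq \|\bm A\| \leq \beta$), keeping the recursion well-behaved; the ratio condition $\alpha_t/\alpha_{t+1}\leq \sqrt{1+(\lambdamin/2)\alpha_{t+1}}$ (equivalently $\alpha_t^2 \leq \alpha_{t+1}^2(1+(\lambdamin/2)\alpha_{t+1})$, and a fortiori $\alpha_t \leq \alpha_{t+1}(1+(\lambdamin/2)\alpha_{t+1})$ after taking square roots) is the crucial ingredient that lets the slowly decaying step sizes be compared across consecutive indices.

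Carrying out the algebra, I would substitute the ratio bound to replace $\alpha_t$ by $\alpha_{t+1}$ plus a correction: from $\alpha_t \leq \alpha_{t+1}\sqrt{1+(\lambdamin/2)\alpha_{t+1}} \leq \alpha_{t+1}(1+(\lambdamin/4)\alpha_{t+1})$ (using $\sqrt{1+x}\leq 1+x/2$), one gets
\begin{align*}
(1-\lambdamin\alpha_{t+1})\tfrac{2}{\lambdamin}\alpha_t \leq (1-\lambdamin\alpha_{t+1})\tfrac{2}{\lambdamin}\alpha_{t+1}\big(1+\tfrac{\lambdamin}{4}\alpha_{t+1}\big).
\end{align*}
Expanding and comparing against the target $\tfrac{2}{\lambdamin}\alpha_{t+1}$, the leading $\tfrac{2}{\lambdamin}\alpha_{t+1}$ terms cancel and one is left with a residual of the form $-(\text{positive})\cdot\alpha_{t+1}^2 + \alpha_{t+1}^2$, and I would check that the negative second-order term from the contraction outweighs the extra $\alpha_{t+1}^2$ from the noise injection, closing the induction. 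The base case $t=0$ reads $S_0 = \alpha_1^2 \leq \tfrac{2}{\lambdamin}\alpha_1$, which holds since $\alpha_1 \leq \lambdamin/\beta^2 \leq 2/\lambdamin$ (again because $\beta^2\geq \lambdamin$, in fact $\beta^2 \geq \lambdamin^2/2$ suffices). Once $S_t \leq \tfrac{2}{\lambdamin}\alpha_{t+1}$ is established, multiplying by $c$ and adding the transient term gives the theorem exactly as stated. The delicate point to watch is ensuring the constant tracking is tight enough that the factor $2$ (rather than something larger) survives; I would keep the second-order terms explicit rather than discarding them to confirm the inequality is genuinely satisfied under the stated step-size regime.
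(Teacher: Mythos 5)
Your proposal is correct and follows essentially the same route as the paper: unroll the one-step recursion from Lemma~\ref{lem:td_des} and bound the accumulated noise sum $\sum_{j=1}^{t+1}\alpha_j^2\prod_{\ell=j+1}^{t+1}(1-\lambdamin\alpha_\ell)$ by $\tfrac{2}{\lambdamin}\alpha_{t+1}$, which is precisely the content of the paper's auxiliary Lemma~\ref{lem:aux} (with $p=1$, $a=\lambdamin$) that you re-derive inline by induction. Your inductive step does close: the contraction contributes $-\tfrac{3}{2}\alpha_{t+1}^2$ against the injected $+\alpha_{t+1}^2$, leaving a nonpositive residual.
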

\noindent
Theorem~\ref{thm-td} is a direct consequence of Lemma~\ref{lem:td_des} and \ref{lem:aux}. 
If we let $\alpha_{t}={\cal O}(1/t)$, then convergence rate of decentralized {\td} is ${\cal O}(1/t)$. We notice that the transient time is \emph{zero} since the algorithm employs the \emph{same} ${\bm A}(\zeta)$ across all the agents. We remark that a related observation has been made on decentralized {\td} algorithm in \cite{liu2021distributed}.  

\section{Numerical Simulations}\label{sec:simulation}
This section presents simulation examples to validate our theoretical findings. 

\subsection{Quadratic Minimization} 
The first example considers a quadratic stochastic optimization problem with
\beq\label{eq:quad}
    \min_{\prm\in\RR^{d}}~f(\prm) = \frac{1}{n}\sum_{i=1}^{n} \EE_{ Z_i \sim {\sf B}_i} \left[ \frac{1}{2}\prm^\top ( \tilde{\bm A}_{i} + \tilde{\bm A}_{i}^\top )\prm + \prm^\top \tilde{\bm b}_i\right],
\eeq
where $Z_i \equiv (\tilde{\bm A}_{i}, \tilde{\bm b}_i)\in \RR^{d\times d}\times \RR^d$ is a sample drawn from $\textbf{\sf B}_{i}$ accessible by agent $i$, satisfying $\EE[\tilde{{\bm A}}_i] = {\bm A}_{i}$, $\EE[\tilde{\bm b}_{i}] = {\bm b}_{i}$. 
Assuming that $\sum_{i=1}^{n} ( {\bm A}_{i} + {\bm A}_{i}^\top )$ is positive definite, the optimal solution of (\ref{eq:quad}) admits the closed form
$\prm^{\star} =  - \left(\sum_{i=1}^{n} \left({\bm A}_{i} + {\bm A}_{i}^\top \right) \right)^{-1}\sum_{i=1}^{n} {\bm b}_{i}$.

To simulate a \emph{heterogeneous data} setting, 
we let ${\sf B}_i$ be an empirical distribution given by the dataset ${\sf B}_i = \{ (\tilde{\bm A}_i^1, \tilde{\bm b}_i^1), \ldots, ( \tilde{\bm A}_i^{|{\sf B}_i|}, \tilde{\bm b}_i^{|{\sf B}_i|} ) \}$ where we set $|{\sf B}_i| = 500$.
For the $j$th sample, each element of $\tilde{\bm b}_i^j$ is i.i.d.~generated with ${\cal N}(0,1)$; and each element of $\tilde{\bm A}_i^j$ follows the distribution:
\[ 
[\tilde{\bm A}_i^j]_{k, \ell} 
\sim \begin{cases}
{\cal N}(0,1), ~~ \text{if $k \neq \ell$ or $k \notin \Pi_{2,i}^d$},  \\
{\cal N}(0,2), ~~ \text{if $k = \ell \in \Pi_{2,i}^{d}$} , \\
\end{cases}
\]
where $\Pi_{2,i}^{d} \subseteq \{1,\ldots,d\}$ with $|\Pi_{2,i}^{d}| = 2$ is an agent-specific subset of coordinates. Each $\Pi_{2,i}^{d}$ is generated by taking a distinct 2-combination of $\{1,\ldots,d\}$. The \emph{homogeneous data} setting is set by extending the above. The samples $Z_i$ drawn by agent $i$ follows the distribution ${\sf B} \equiv \frac{1}{n} \sum_{i=1}^n {\sf B}_i$, i.e., $Z \sim {\sf B}$ is uniformly drawn from $\{{\sf B}_{i}\}_{i=1}^{n}$. Moreover, we also set ${\sf B}_i(\alpha) \eqdef \alpha {\sf B}+(1-\alpha) {\sf B}_i$ for \emph{partially heterogeneous data}, i.e., $Z \sim {\sf B}_i(\alpha)$ is drawn with probability $\alpha$ from ${\sf B}$, $1-\alpha$ from ${\sf B}_i$. The distribution becomes more heterogeneous as $\alpha \to 0$.  

\begin{figure}[t]
    \centering
    \includegraphics[width=.49\linewidth]{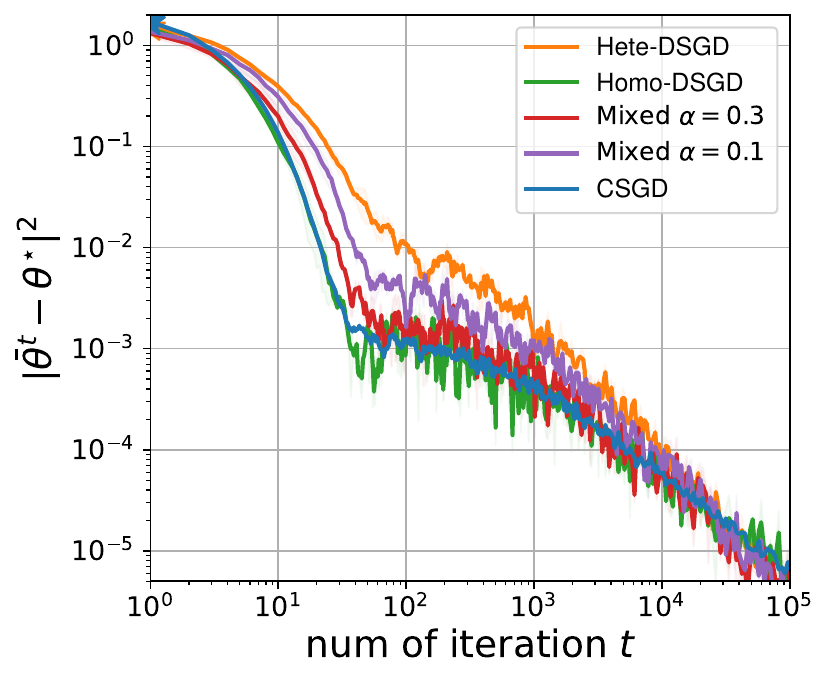}
    \includegraphics[width=.49\linewidth]{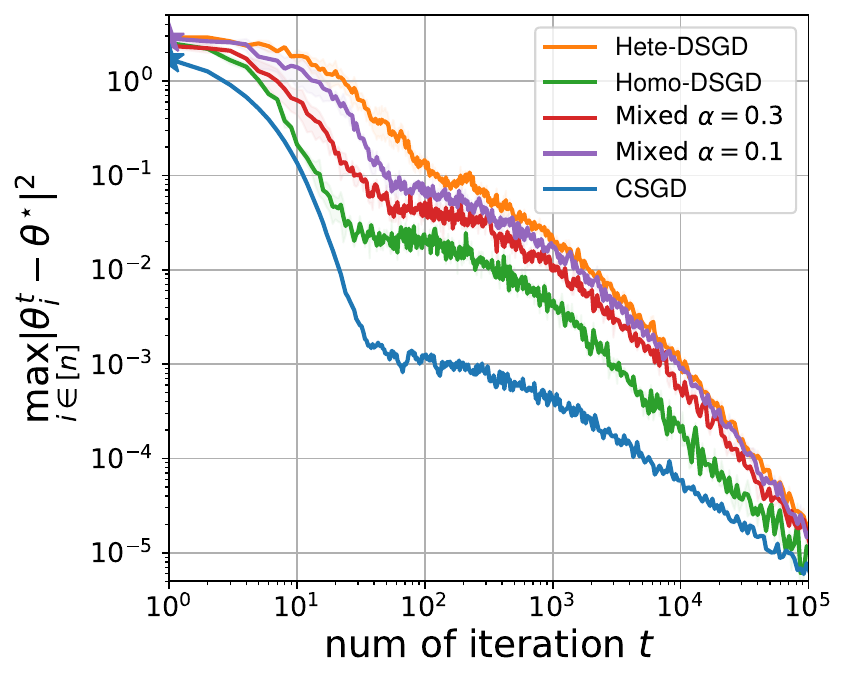}
    \caption{\textbf{Quadratic Minimization} Comparison of {\sf Homo-DSGD}, {\sf Hete-DSGD}, mixed distribution ${\sf B}(\alpha)$ ($\alpha=0.1, 0.3$) alongside with {\sf CSGD} on convergence behavior under average optimality gap measurement (\emph{First Figure}) and worst optimality gap (\emph{Second Figure}). 
     }
    \label{fig:quad}
\end{figure}

We let $d=10$, $n=20$ and agents are connected by a ring graph with mixing matrix ${\bm W}$ where ${\bm W}_{ii} = 0.3$, ${\bm W}_{i, i-1}={\bm W}_{i, i-1} = 0.35$. 
Note that we have $\rho=0.034$ in this case. For {\algoname}, each agent draw one sample to generate the stochastic gradient estimate. For brevity, we will refer to {\algoname} under the prescribed homogeneous data setting as {\sf Homo-DSGD}, heterogeneous data setting as {\sf Hete-DSGD}, and partially heterogeneous data setting as {\sf Mixed $\alpha$} with $\alpha \in \{ 0.1,0.3 \}$. As a benchmark, we consider {\sf CSGD} which draws a mini-batch of $n$ samples from ${\sf B}$. Unless otherwise specified, the stepsizes are set as $\gamma_{k} = 10/(500+k)$. 
 
In Fig. \ref{fig:quad}, we plot the average/worst optimality gap $\normtxt{\Bprm^t-\prm^{\star}}^2$ against iteration number $t$ for the test algorithms over $20$ repeated runs. 
Observe that the {\sf DSGD} algorithms under the three settings approach the same steady state convergence behavior as the centralized algorithm {\sf CSGD} as $t\rightarrow \infty$ validating with Corollary \ref{cor:2}. Observe that {\sf Homo-DSGD} performs almost same as {\sf CSGD} with the shortest transient time, while {\sf Hete-DSGD} needs more time ($\approx 3\times 10^{4}$) to catch up with the performance of {\sf CSGD}. The transient time of {\sf DSGD} is improved to around $10^{4}$ iterations when the degree of heterogeneity is decreased (with $\alpha=0.3$). The observation corroborates with Theorem \ref{thm2}. Besides, the acceleration of transient degrades for the worst-case optimality gap $\max_{i\in [n]}\norm{\prm_i^t-\prm^\star}^2$ and the transient time of {\sf Homo-DSGD} is increased.
\begin{figure}
    \centering
    \includegraphics[width=.8\linewidth]{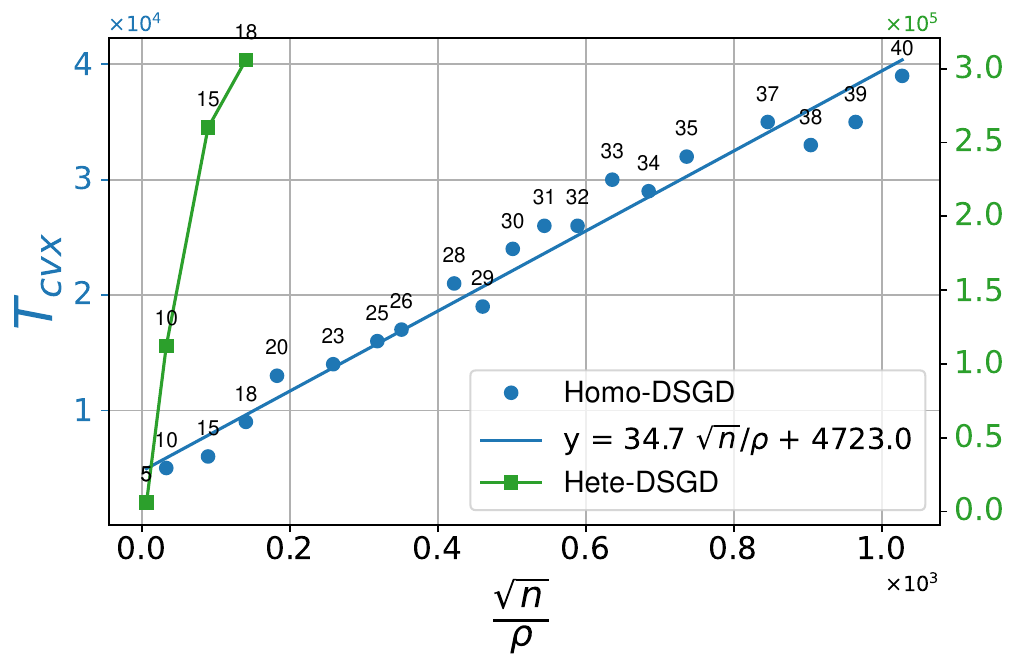}
    \caption{Verifying the ${\cal O}(\sqrt{n}/\rho)$ transient time against network size $n$. Notice that the transient time for {\sf Homo-SGD} and {\sf Hete-SGD} are plotted with different scale in the $y$-axis for better illustration. The number above the marker denotes the network size $n$.
    }
    \label{fig:tran}
\end{figure}

Lastly, we verify the transient time bound ${\cal O}(\sqrt{n}/\rho)$ in Corollary \ref{cor:cvx_tran} for {\sf Homo-DSGD}. The transient time of decentralized algorithms are estimated as 
\[
    \textstyle \widehat{T}_{\sf cvx} = {\displaystyle \inf_{t\geq 1}} \left\{ t: \frac{\max_{j\in[n]}\normtxt{\Bprm^{t-i}_{j}-\prm^\star}^2 }{\normtxt{\Bprm^{t}_{\tt Cen}-\prm^\star}^2} \leq \frac{1}{4},~i=1,\cdots, 3000 \right\},
\]
where $\Bprm^{t}_{\tt Cen}$ is the running average of {\sf CSGD} under the same settings as {\algoname}.
Fig. \ref{fig:tran} shows the dependence of $\widehat{T}_{\sf cvx}$ and $\frac{\sqrt{n}}{\rho}$ under different data settings. We see that the estimated transient time of {\sf Homo-DSGD} has a linear dependence on $\frac{\sqrt{n}}{\rho}$ which corroborates with our theory. 

\begin{figure*}[t]
    \centering
    \includegraphics[width=.98\linewidth]{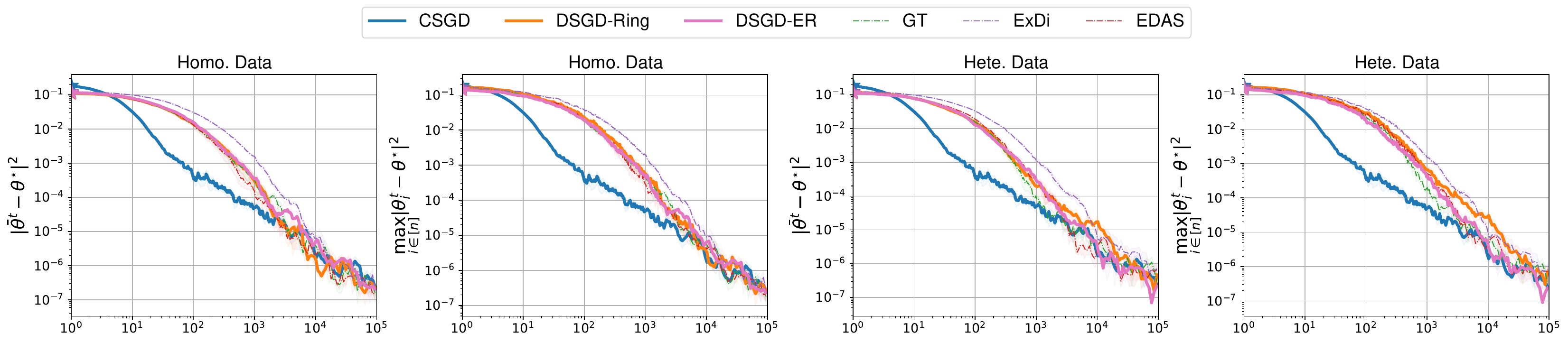}
    \vspace{-.1cm}
    \caption{\textbf{Logistic Regression Example} Comparison between CSGD and decentralized algorithms on homogeneous data (First \& Second Figure)  and heterogeneous data  (Third \& Fourth Figure), measured with t he average optimality gap measurement and worst optimality gap.
    {The spectral gap of the ER graph is \update{$\rho=0.057$}, and for the ring graph it is \update{$\rho=0.0066$}. Unless otherwise specified, the above simulations are conducted on the ring graph.}\vspace{-.3cm}}
    \label{fig:logistic}
\end{figure*}

\subsection{Logistic Regression}

We consider a binary classification problem through training a support vector machine (SVM) across 30-agents with synthetic datasets. The ground truth is given by $[\prm_{\text{o}}, b_{\text{o}}]\in {\cal U}[-1, -1]^{5+1}$. For each $i$, the data distribution ${\sf B}_i$ is taken to be the empirical distribution of $200$ samples $\{ x_j^i , y_j^i \}_{j=1}^{200}$, which are generated as $x_j^i \sim {\cal U}[-1,1]^{5}$, $y_j^i = (\sign(\Pscal{x_j^i}{\prm_{\text{o}}}+b_{\text{o}})+1)/2.$ Denote ${\sf B}$ as the empirical distribution of $\{ \{ x_j^i , y_j^i \}_{j=1}^{200} \}_{i=1}^{30}$ with size $m=6000$. The classifier can be obtained by solving (\ref{eq:opt}) using logistic loss
\beq\label{eq:logistic}
    \ell( \prm; z ) = \frac{r}{2}\norm{\prm}^2 + \log({1 + \exp( y \pscal{x}{\prm}) }) - y\Pscal{\prm}{x},
\eeq
where $z\equiv(x,y)\in \RR^d \times \{0,1\}$ is feature-label pair. With $r=1$, $\ell(\prm; z)$ is $r$-strongly convex and $L$-smooth. We let {$\gamma_{t}=5/(100 + t)$}, $\forall t$. For decentralized algorithm, the mini-batch size is set as $4$, and {\sf CSGD} uses a batch size of $4n$. {In addition to the ring graph topology, we simulate the performance of {\sf DSGD} on an Erdos-Renyi (ER) graph generated with connectivity of $p=0.05$. The mixing matrix weights for the ER graph are computed by the Metropolis-Hasting rule while ensuring a self-weight of ${\bm W}_{i,i} \approx 0.5$.}

Fig.~\ref{fig:logistic} compares the performance of {\sf DSGD}, Gradient tracking ({\sf GT}) \cite{pu2021distributed}, Exact diffusion ({\sf ExDi}) \cite{yuan2023removing} under the homogeneous/heterogeneous data settings. 
With homogeneous data, {\sf DSGD} can achieve as good performance as other sophisticated algorithms and achieve comparable performance with its centralized counterpart. On the other hand, without data homogeneity, {\sf DSGD} requires more iterations to overcome the influence of decentralization. {Furthermore, with heterogeneous data, the transient time of {\sf DSGD} is reduced with the ER topology which has a larger spectral gap than the ring topology, suggesting that the transient time in heterogeneous data setting is sensitive to the spectral gap.} These results coincide with our theoretical analysis.

{
\begin{figure}[t]
    \centering
    \begin{tikzpicture}[scale=0.7] 
        \draw[step=1cm,color=black] (0,0) grid (4,4);
        
        \foreach \x in {0,1,2,3}
            \foreach \y in {0,1,2,3}
                \node at (\x+0.5,\y+0.5) {\pgfmathparse{int(4*\y+\x+1)}\pgfmathresult};
    \end{tikzpicture}
    \captionof{figure}{State space of grid world.\vspace{-.2cm}}\label{fig:grid} 
\end{figure}

\subsection{Decentralized TD(0) Learning}
We consider a similar setting to \cite{liu2021distributed} on the {\tt GridWorld} of a $4\times 4$ grid as shown in Fig.~\ref{fig:grid}), where ${\cal S} = \{1,2\cdots, 16\}$ and ${\cal A}=\{\text{left, right, up, down}\}$. If the action leads out of the grid, then the next state will remain to be the current state. The discounting factor in the MDP is $\gamma=0.9$. For each $s,a$, we generate the reward table by $r(s,a) \sim {\cal N}(1,10)$. We consider evaluating the \emph{random policy}, i.e., the agent chooses one of the 4 actions with equal probability. The feature vectors are generated as $\phi(s) = (1,0,0,0)^\top$ for $s\in \{1,2,5,6\}$, $\phi(s) = (0,1,0,0)^\top$ for $s\in \{3,4,7,8\}$ $\phi(s) = (0,0,1,0)^\top$ for $s\in \{9,10,13,14\}$ and $\phi(s) = (0,0,0,1)^\top$ for $s\in \{11,12,15,16\}$. Furthermore, the samples for {\td} algorithm \eqref{eq:dtd0} are generated according to A\ref{ass:mdp-0}.

We consider a multi-agent scenario where $n=10$ agents connected by a ring graph collaboratively evaluate the random policy $\pi$. The mixing matrix ${\bm W}$ is set as ${\bm W}_{i,j} = 0.8$ if $i=j$, ${\bm W}_{i,j} = 0.1$ if $|i-j| = 1$, and ${\bm W}_{i,j} = 0$ otherwise. The optimal $\prm^\star$ is obtained by solving the Bellman Equation ${\bm A} \prm^\star = \bar{\bm b}$. 
In Fig. \ref{fig:td}, we plot optimality gap against iteration number. As observed, both centralized and decentralized {\td} converges to $\prm^\star$ at the rate of ${\cal O}(1/t)$. Moreover, there is no observable transient time exhibited by the decentralized algorithm. Our finding corroborates the conclusions in Theorem \ref{thm-td}.

\begin{figure}[t]
\centering
    \includegraphics[width=.49\linewidth]{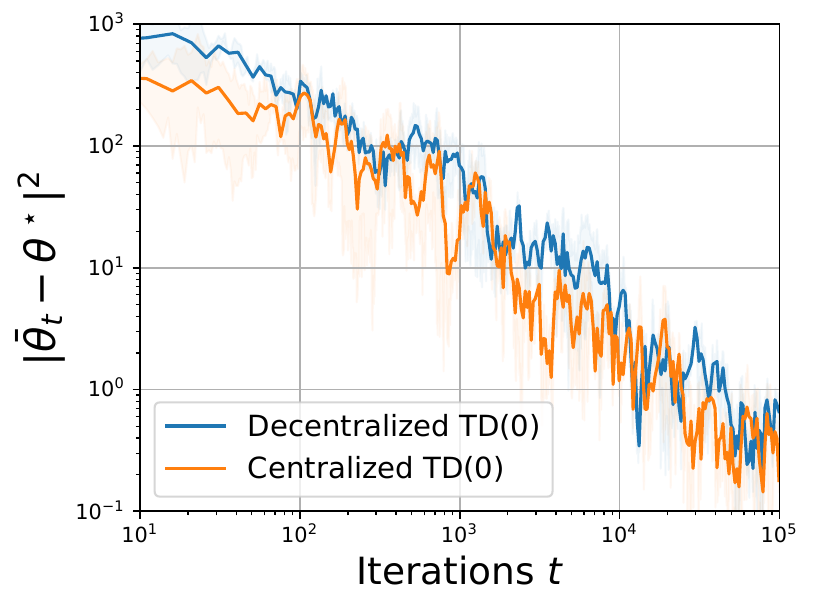}
    \includegraphics[width=.49\linewidth]{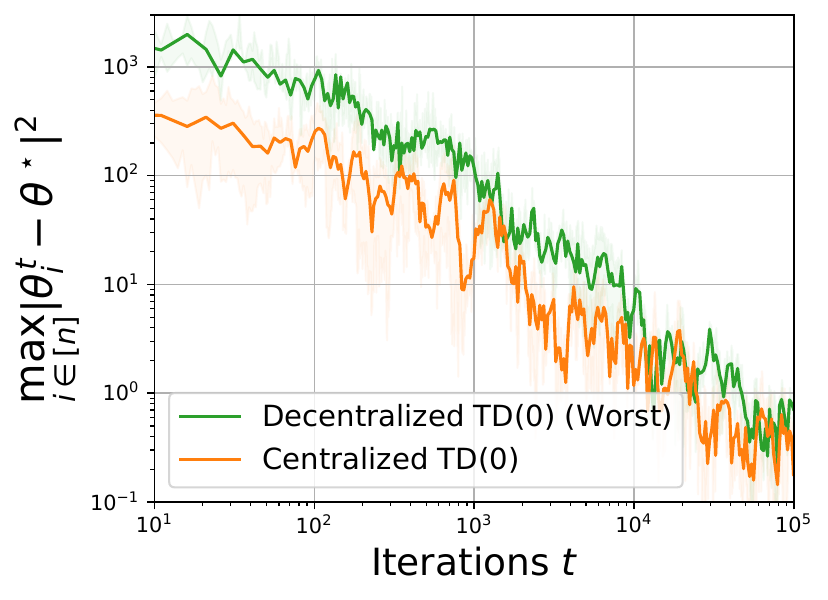}
    \caption{{\bf {\td} Learning} Comparison of {\td} algorithms with linear function approximation. (Left) Error of averaged model, (Right) Error of worst model in network.}\vspace{-.3cm}
    \label{fig:td}
\end{figure}}

\section{Conclusion}
We provide a fine grained analysis for the convergence rate of {\sf DSGD}. We focus on the role of data homogeneity when the loss function is smooth (possibly non-convex), and strongly convex. Particularly, the plain {\sf DSGD} algorithm is shown theoretically and empirically to achieve fast convergence rate when the data distribution across agents are \emph{similar} to each other. 
\update{Our findings demonstrate that the {\algoname} algorithm, despite being simple, can already achieve fast convergence that has on-par performance with sophisticated algorithms such as gradient tracking.}
{For future work, a promising direction is to exploit the high order smoothness (cf.~A\ref{ass:Hsmooth}) property to obtain tight convergence rates for other algorithms such as \cite{xin2021improved, huang2022improving, yuan2023removing}. We also note the recent works \cite{cutkosky2020momentum,fang2019sharp} which exploited A\ref{ass:Hsmooth} for accelerating stochastic gradient methods in a centralized setting.}
\update{We also envision that more efficient algorithms than {\algoname} can be developed that better adapts to data homoegeneity, e.g., \cite{tian2022acceleration, le2023refined}.}

\appendices
\section{Unified Consensus Error Bound} \label{app:unif-cons-bd}
Before we present the the proof of lemmas, we denote the following notations throughout the appendix: 
\beqq 
\begin{aligned} 
& \grd f_i^t := \grd f_i(\prm_i^t),~\Tgrd f_i^t := \grd \ell( \prm_i^t ; Z_i^{t+1} ), \\
& \Prm^t \eqdef \big( \prm_1^t, \ldots, \prm_n^t \big)^\top,~\Tgrd F^t \eqdef \left(\Tgrd f_1^t, \ldots, \Tgrd f_n^t\right)^\top.
\end{aligned}
\eeqq 
Note that $\Prm^t$, $\Tgrd F^t$ are $n \times d$ matrices.

In this section, we first present a unified consensus error bound that subsumes Lemma \ref{lem:ceb-ncvx} and Lemma \ref{lem:ceb-scvx}, as follows:
\begin{lemma}
    Under A\ref{ass:smooth}, \ref{ass: graph}, \ref{ass:SecOrdMom}, if
    $\sup_{t\geq 1}\gamma_{t}\leq \frac{\rho}{2 \sqrt{2(\sigma_{1}^2+2L^2)}}$, then for any $t\geq 0$, it holds that
    \begin{align}\label{eq:eee}
        & \EE_t [\norm{\CSE{t+1}}_F^2] \leq \left(1- \frac{\rho}{2} \right)  \norm{\CSE{t}}_F^2 
        \\
        & +  \frac{2n  \gamma_{t+1}^2}{\rho} \left[ (\varsigma^2 + \sigma_0^2 ) + 2\sigma_1^2 \normtxt{\Bprm^t-\prm^\star}^2 \right].
        \nonumber
    \end{align}
If in addition A\ref{ass:SecOrdMom2} holds, $\sup_{t \geq 1}\gamma_{t}\leq \frac{\rho}{\sqrt[4]{c_3}}$, 
Then for any $t \geq 0$, it holds
\begin{align}\label{eq:ff}
    &\EE_t \norm{\CSE{t+1}}^4_F 
    \leq (1-\frac{\rho}{2})\norm{\CSE{t}}_F^4 
    \\
    &+ 54 n^2  \left[(\bar{\sigma}_{0}^4 + 4\varsigma^4 )+8\bar{\sigma}_{1}^4\normtxt{\Bprm^t-\prm^\star}^4 \right]\frac{\gamma_{t+1}^4}{\rho^3},
    \nonumber
\end{align}
where $c_3 \eqdef 864n (\bar{\sigma}_{1}^4 + 8 L^4)$.
\end{lemma}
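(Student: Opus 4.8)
The plan is to prove both inequalities from a single matrix recursion for the consensus error, handling the second- and fourth-moment bounds in parallel. In the stacked notation of the appendix, $\Prm^t = (\prm_1^t,\ldots,\prm_n^t)^\top$, the {\algoname} update \eqref{eq:dsgd} reads $\Prm^{t+1} = {\bm W}\Prm^t - \gamma_{t+1}\Tgrd F^t$. Writing ${\bm J} := \frac{1}{n}\mathbf{1}\mathbf{1}^\top$ for the averaging projection (so that $\|({\bm I}-{\bm J})\Prm^t\|_F = \|\CSE{t}\|_F$) and using double stochasticity from A\ref{ass: graph}, i.e.\ ${\bm J}{\bm W}={\bm W}{\bm J}={\bm J}$, the centered iterate obeys
\[
({\bm I}-{\bm J})\Prm^{t+1} = ({\bm I}-{\bm J}){\bm W}({\bm I}-{\bm J})\Prm^t - \gamma_{t+1}({\bm I}-{\bm J})\Tgrd F^t,
\]
where, since ${\bm U}{\bm U}^\top = {\bm I}-{\bm J}$ and $\|{\bm U}^\top{\bm W}{\bm U}\|\le 1-\rho$, the gossip part contracts as $\|({\bm I}-{\bm J}){\bm W}({\bm I}-{\bm J})\Prm^t\|_F \le (1-\rho)\|\CSE{t}\|_F$. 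In both proofs I decompose the stochastic gradient into its conditional mean $\grd F^t := (\grd f_1^t,\ldots,\grd f_n^t)^\top$ and the conditionally mean-zero noise $\Tgrd F^t - \grd F^t$, and I control the centered gradient through the telescoping $\grd f_i(\prm_i^t) - \grd f(\Bprm^t) = [\grd f_i(\prm_i^t)-\grd f_i(\Bprm^t)] + [\grd f_i(\Bprm^t)-\grd f(\Bprm^t)]$, which via A\ref{ass:smooth} and A\ref{ass:hete2} yields $\|({\bm I}-{\bm J})\grd F^t\|_F^2 \le 2L^2\|\CSE{t}\|_F^2 + 2n\varsigma^2$.

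For the second-moment bound \eqref{eq:eee}, I take $\EE_t[\cdot]$; the noise is mean-zero so the cross term vanishes, leaving a deterministic part plus $\gamma_{t+1}^2\,\EE_t\|({\bm I}-{\bm J})(\Tgrd F^t-\grd F^t)\|_F^2$. I bound the deterministic part by Young's inequality $\|a+b\|^2\le(1+\eta)\|a\|^2+(1+\eta^{-1})\|b\|^2$ with $\eta={\cal O}(\rho)$, so that $(1+\eta)(1-\rho)^2\le 1-\tfrac{3\rho}{4}$ while $(1+\eta^{-1})={\cal O}(1/\rho)$. The noise term is controlled by A\ref{ass:SecOrdMom} together with $\sum_i\|\prm_i^t-\prm^\star\|^2\le 2\|\CSE{t}\|_F^2 + 2n\|\Bprm^t-\prm^\star\|^2$. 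Collecting the $\|\CSE{t}\|_F^2$ coefficients shows that the step-size condition $\gamma_{t+1}\le \rho/(2\sqrt{2(\sigma_1^2+2L^2)})$ is exactly what forces the total multiplier below $1-\tfrac{\rho}{2}$, and the residual terms assemble into $\tfrac{2n\gamma_{t+1}^2}{\rho}[(\varsigma^2+\sigma_0^2)+2\sigma_1^2\|\Bprm^t-\prm^\star\|^2]$.

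For the fourth-moment bound \eqref{eq:ff}, I start from the Young bound $\|\CSE{t+1}\|_F^2 \le (1+\eta)\|A\|_F^2 + (1+\eta^{-1})\gamma_{t+1}^2\|({\bm I}-{\bm J})\Tgrd F^t\|_F^2$ with the \emph{deterministic} $A:=({\bm I}-{\bm J}){\bm W}({\bm I}-{\bm J})\Prm^t$, and square it (valid since both sides are nonnegative). This produces a leading term $(1+\eta)^2\|A\|_F^4\le(1+\eta)^2(1-\rho)^4\|\CSE{t}\|_F^4$, a cross term $\propto \gamma_{t+1}^2\|A\|_F^2\|({\bm I}-{\bm J})\Tgrd F^t\|_F^2$, and a tail $(1+\eta^{-1})^2\gamma_{t+1}^4\|({\bm I}-{\bm J})\Tgrd F^t\|_F^4$. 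A \emph{second}, $\rho$-weighted Young's inequality splits the cross term as $2XY\le\tfrac{\rho}{4}X^2+\tfrac{4}{\rho}Y^2$ with $X=\|A\|_F^2$: the $\tfrac{\rho}{4}\|A\|_F^4$ piece folds into the contraction, while the $\tfrac{4}{\rho}Y^2$ piece inherits the $(1+\eta^{-1})^2={\cal O}(1/\rho^2)$ factor and so contributes a $\gamma_{t+1}^4\|({\bm I}-{\bm J})\Tgrd F^t\|_F^4/\rho^3$ term, which is the source of the $1/\rho^3$ prefactor. It then remains to bound $\EE_t\|({\bm I}-{\bm J})\Tgrd F^t\|_F^4$: I use $(\sum_i a_i)^2\le n\sum_i a_i^2$ to pass to a sum of per-agent fourth powers, split each via $(a+b)^4\le 8a^4+8b^4$, and apply the fourth-moment oracle A\ref{ass:SecOrdMom2} with A\ref{ass:smooth}, A\ref{ass:hete2}; the reverse inequality $\sum_i a_i^2\le(\sum_i a_i)^2$ converts $\sum_i\|\prm_i^t-\Bprm^t\|^4$ and $\sum_i\|\prm_i^t-\prm^\star\|^4$ back into $\|\CSE{t}\|_F^4$ and $\|\Bprm^t-\prm^\star\|^4$. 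Imposing $\gamma_{t+1}\le\rho/\sqrt[4]{c_3}$ with $c_3=864n(\bar\sigma_1^4+8L^4)$ drives the $\|\CSE{t}\|_F^4$ coefficient below $1-\tfrac{\rho}{2}$, and the surviving $\gamma_{t+1}^4$ terms collapse to $\tfrac{54n^2\gamma_{t+1}^4}{\rho^3}[(\bar\sigma_0^4+4\varsigma^4)+8\bar\sigma_1^4\|\Bprm^t-\prm^\star\|^4]$.

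The main obstacle is the fourth-moment computation. Unlike the second-moment case, squaring leaves a genuine cross term, and the delicate point is to split it with the \emph{right} $\rho$-weighting so that its quadratic-in-$\|A\|_F^2$ part is absorbed into the contraction while exactly producing the extra $1/\rho$ (on top of the tail's $1/\rho^2$) that yields the $1/\rho^3$ scaling. Keeping the $\|\CSE{t}\|_F^4$ multiplier below $1-\rho/2$ while simultaneously pinning down the numerical constants ($54$, and $c_3=864n(\bar\sigma_1^4+8L^4)$ matching the step-size threshold) requires careful, repeated use of Young's inequality and of the power-mean inequalities $(\sum_i a_i)^2\le n\sum_i a_i^2$ and $\sum_i a_i^2\le(\sum_i a_i)^2$ applied in the correct directions; this bookkeeping is where essentially all the effort lies.
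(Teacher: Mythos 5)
Your proposal is correct and follows essentially the same route as the paper: the projected recursion $\CSE{t+1} = {\bm U}({\bm U}^\top{\bm W}{\bm U}){\bm U}^\top\Prm^t - \gamma_{t+1}{\bm U}{\bm U}^\top\Tgrd F^t$, Young's inequality with $\rho$-dependent weights to get the $(1-\rho)$ contraction and the $1/\rho$ (resp. $1/\rho^3$) prefactors, and the noise/heterogeneity/Lipschitz decomposition of $\EE_t\|{\bm U}{\bm U}^\top\Tgrd F^t\|_F^{2}$ and $\EE_t\|{\bm U}{\bm U}^\top\Tgrd F^t\|_F^{4}$ under A\ref{ass:SecOrdMom}, A\ref{ass:SecOrdMom2}, with the step-size condition absorbing the $\|\CSE{t}\|_F^2$ (resp. $\|\CSE{t}\|_F^4$) coefficients into the contraction; your explicit weighted-Young treatment of the cross term is exactly what the paper's second application of Young's inequality with $\beta=\rho/(1-\rho)$ accomplishes. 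One small correction: the gradient-heterogeneity bound $2n\varsigma^2$ comes from A\ref{ass:smooth}-\ref{item:hete}, not A\ref{ass:hete2} (which concerns Hessians and is not a hypothesis of this lemma), though this does not affect the validity of the bound you derive.
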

\begin{proof}

By noting that ${\bm I}-\frac{1}{n}{\bm 1}{\bm 1}^\top = {\bm U} {\bm U}^\top$, we observe the following chain:
\begin{align}
    & \Prm^{t+1}-\BPrm^{t+1} = {\bm U}{\bm U}^\top \left({\bm W}\Prm^t -\gamma_{t+1}\Tgrd F^t\right) \nonumber \\
    &= {\bm U}{\bm U}^\top \left[{\bm W} \left({\bm U \bm U}^\top + \frac{1}{n}{\bm 1 \bm 1}^\top\right)\Prm^t -\gamma_{t+1}\Tgrd F^t\right] \nonumber \\
    &= {\bm U}\left({\bm U}^\top {\bm W}{\bm U} \right) {\bm U}^\top \Prm^t - \gamma_{t+1}{\bm U}{\bm U}^\top \Tgrd F^t.\label{eq:consensuserr}
\end{align}
Using A\ref{ass: graph} shows that $\norm{\CSE{t+1} }_F^2$ is upper bounded by
\begin{align*}
    & (1+\alpha) (1-\rho)^2 \norm{ \CSE{t} }_F^2 \!+\! \left(1+\frac{1}{\alpha}\right) \gamma_{t+1}^2\norm{{\bm U \bm U}^\top \Tgrd F^t}_F^2,
\end{align*}
for any $\alpha > 0$. Setting $\alpha=\frac{\rho}{1-\rho}$ gives
\begin{align}\label{eq:cseb}
    \norm{\CSE{t+1}}_F^2 
    & \leq (1-\rho)\norm{\CSE{t}}_F^2+\frac{\gamma_{t+1}^2}{\rho}  \norm{{\bm U \bm U}^\top \Tgrd F^t}_F^2.
\end{align}
Next, we aim to bound the last term:
\begin{align*}\textstyle
    &\quad \EE_{t}\norm{{\bm U \bm U}^\top \Tgrd F^t}_F^2 = \EE_{t} \bigg[ \sum_{i=1}^{n} \bigg\| \Tgrd f_i^t-\frac{1}{n}\sum_{j=1}^{n}\Tgrd f_j^t \bigg\|^2 \bigg]
    \\
    & = \EE_{t} \Bigg[\sum_{i=1}^{n} \Big\|\left(\Tgrd f_i^t - \grd f_i^t \right) + \bigg(\grd f_i^t -\frac{1}{n}\sum_{j=1}^{n}\grd f_j^t\bigg)
    \nonumber
        \\
        &\quad - \frac{1}{n}\sum_{j=1}^{n}(\Tgrd f_j^t-\grd f_j^t)\Big\|^2\Bigg] \nonumber
    \\
    &\overset{(a)}{\leq} \sum_{i=1}^{n} \EE_{t} \left[ \norm{\Tgrd f_i^t - \grd f_i^t}^2\right] + \sum_{i=1}^{n}\bigg\| \grd f_i^t - \frac{1}{n}\sum_{j=1}^{n}\grd f_j^t \bigg\|^2 
        \\
        &\quad + \frac{1}{n^2}\sum_{j=1}^{n} \EE_{t} \left[ \big\| \Tgrd f_j^t-\grd f_j^t \big\| ^2\right] \nonumber
    \\
    &\leq  2\sum_{i=1}^{n} \EE_{t} \left[ \norm{\Tgrd f_i^t - \grd f_i^t}^2 \right] + \sum_{i=1}^{n} \bigg\| \grd f_i^t-\frac{1}{n}\sum_{j=1}^{n}\grd f_j^t \bigg\|^2, \nonumber
\end{align*}
where inequality (a) is due to $\EE[ \Tgrd f_j^t - \grd f_j^t ] = 0$. For the first term, applying A\ref{ass:SecOrdMom} yields 
\begin{align}\label{eq2a}
    &\sum_{i=1}^{n}  \EE_{t} \left[ \norm{\Tgrd f_i^t - \grd f_i^t}^2 \right]
    \leq  (n \sigma_0^2+ \sigma_1^2 \sum_{i=1}^{n} \norm{\prm_i^t-\prm^\star}^2 )  \nonumber
    \\
    &\leq \left(n\sigma_0^2+2\sigma_1^2\norm{\CSE{t}}^2_F +2n\sigma_1^2 \normtxt{\Bprm^t-\prm^\star}^2\right).
\end{align}
For the second term, one has the following upper bound
\small \begin{align}\label{eq3}
    &\sum_{i=1}^{n} \bigg\| \grd f_i^t-\frac{1}{n}\sum_{j=1}^{n}\grd f_j^t \bigg\|^2 
    \\
    &\leq 2\sum_{i=1}^{n} \left( \norm{\grd f_i^t -\grd f(\prm_i^t)}^2 + \bigg\| \frac{1}{n}\sum_{j=1}^{n}(\grd f_j^t-\grd f_j(\prm_i^t))\bigg\|^2\right) \nonumber
    \\
    &\leq 2\sum_{i=1}^{n} \left( \norm{\grd f_i^t -\grd f(\prm_i^t)}^2 + \frac{1}{n}\sum_{j=1}^{n}\bigg\|\grd f_j^t-\grd f_j(\prm_i^t) \bigg\| ^2\right)
    \nonumber
    \\
    &\overset{(a)}{\leq} 2n\varsigma^2 \!+\! \frac{2L^2}{n}\sum_{i=1}^{n}\sum_{j=1}^{n}\norm{\prm_j^t-\prm_i^t}^2 
    \leq 2n\varsigma^2 + {4L^2}\norm{\CSE{t}}^2_F,
    \nonumber
\end{align}
\normalsize where we apply A\ref{ass:smooth} on the two terms respectively to obtain inequality (a). \noindent
Combining inequality (\ref{eq2a}) and (\ref{eq3}) derives 
\begin{eqnarray}\label{eq4}
   \begin{aligned}
    \EE_t \left[ \norm{{\bm U \bm U}^\top \Tgrd F^t}_F^2 \right] &\leq 2n(\sigma_0^2 +\varsigma^2) + 4n\sigma_{1}^2\norm{\Bprm^t - \prm^\star}^2
    \\
    &\quad + 4(\sigma_{1}^2+2{L^2})\norm{\CSE{t}}^2_F.
\end{aligned} 
\end{eqnarray}
Substituting above inequality (\ref{eq4}) back to (\ref{eq:cseb}) and requiring $\sup_{t\geq 1}\gamma_{t}\leq \frac{\rho}{2 \sqrt{2(\sigma_1^2+ 2 L^2)}}$, we have the bound (\ref{eq:eee}).

Similar to the proof of \eqref{eq:eee}, using A\ref{ass: graph} and \eqref{eq:consensuserr}, the following holds for any $\alpha, \beta> 0$,
\begin{align*}
\norm{\CSE{t+1}}^4_F & \leq  (1+\beta)(1+\alpha)^2  (1-\rho)^4 \, \norm{\CSE{t}}_F^4 \\
&\quad \textstyle + (1+\frac{1}{\beta})(1+\frac{1}{\alpha})^2  \gamma_{t+1}^{4} \norm{{\bm U}{\bm U}^\top\Tgrd F^t}^4_F .
\end{align*}
Setting $\alpha =\beta = \frac{\rho}{1-\rho}$ leads to the following
\begin{align}\label{eq:bb}
\norm{\CSE{t+1}}^4_F \leq (1-\rho)\norm{\CSE{t}}_F^4 + \frac{\gamma_{t+1}^4}{\rho^3}\norm{{\bm U}{\bm U}^\top\Tgrd F^t}_F^4.
\end{align}
We now bound the last term as:
\small \begin{align*}
    & \textstyle \frac{1}{n} \norm{{\bm U \bm U}^\top \Tgrd F^t}_F^4 = \frac{1}{n} \left(\sum_{i=1}^{n} \norm{\Tgrd f_i^t-\frac{1}{n}\sum_{j=1}^{n}\Tgrd f_j^t}^2 \right)^2
    \\
    & \textstyle \leq \sum_{i=1}^{n} \norm{\Tgrd f_i^t-\frac{1}{n}\sum_{j=1}^{n}\Tgrd f_j^t}^4
    \\
    & \!=\! \sum_{i=1}^{n} \norm{\Tgrd f_i^t\!-\!\grd f_i^t \!+\! \grd f_i^t \!-\!\frac{1}{n}\sum_{j=1}^{n}\grd f_j^t\!-\!\frac{1}{n}\sum_{j=1}^{n}\left(\Tgrd f_j^t\!-\!\grd f_j^t\right)}^4.
\end{align*}
\normalsize Due to the fact that $(a+b+c)^4\leq 27(a^4+b^4+c^4)$, we have
\begin{align}\label{eq-a}
    &\EE_t \left[ \norm{{\bm U \bm U}^\top \Tgrd F^t}_F^4 \right] 
    \leq 27n\sum_{i=1}^{n} \Bigg[ \EE_t \left[ \normtxt{\Tgrd f_i^t - \grd f_i^t}^4 \right] \\
    &+ \Big\|\grd f_i^t - \frac{1}{n}\sum_{j=1}^{n}\grd f_j^t\Big\|^4 + \EE_t \Big[ \Big\|{\frac{1}{n}\sum_{j=1}^{n}(\Tgrd f_j^t-\grd f_j^t)}\Big\|^4 \Big] \Bigg].
    \nonumber
\end{align}
For the last term, applying Cauchy-Schwarz inequality gives
\begin{align*} 
    \bigg\| \frac{1}{n}\sum_{j=1}^{n}(\Tgrd f_j^t\!-\!\grd f_j^t) \bigg\|^4 
    &\leq \frac{1}{n} \sum_{j=1}^{n} \norm{\Tgrd f_j^t-\grd f_j^t}^4.
\end{align*}
Substituting back to inequality (\ref{eq-a}) leads to 
\beqq 
\begin{aligned}
    \EE_t \left[ \norm{{\bm U \bm U}^\top \Tgrd F^t}_F^4 \right] 
    &\leq 27 n \Big[2 \, {\sum_{i=1}^{n} \EE_t \Big[ \norm{\Tgrd f_i^t - \grd f_i^t}^4 \Big] } 
    \\
    &\quad + \sum_{i=1}^{n}\Big\|{\grd f_i^t - \frac{1}{n}\sum_{j=1}^{n}\grd f_j^t}\Big\|^4 \Big].
\end{aligned}
\eeqq
The first term in the above can be controlled with A\ref{ass:SecOrdMom2}:
\begin{align}\label{eqbb}
    & \sum_{i=1}^{n} \EE_t \Big[\norm{\Tgrd f_i^t - \grd f_i^t}^4 \Big] \leq n \bar{\sigma}_{0}^4 +\bar{\sigma}_{1}^4 \sum_{i=1}^{n}\norm{\prm_i^t-\prm^\star}^4 \nonumber
    \\
    &\leq n \bar{\sigma}_{0}^4+ {8} \bar{\sigma}_{1}^{4}\norm{\CSE{t}}^4_F +8n \bar{\sigma}_{1}^{4} \norm{\Bprm^t-\prm^\star}^4 . 
\end{align}
For the second term, we get
\small \begin{align*}
    &\sum_{i=1}^{n}\Big\|\grd f_i^t-\frac{1}{n}\sum_{j=1}^{n}\grd f_j^t\Big\|^4 
    \\
    &\leq 8\sum_{i=1}^{n} \left( \norm{\grd f_i^t -\grd f(\prm_i^t)}^4 + \norm{\frac{1}{n}\sum_{j=1}^{n}(\grd f_j^t-\grd f_j(\prm_i^t))}^4\right)
    \\
    &\leq 8\sum_{i=1}^{n} \left( \norm{\grd f_i^t -\grd f(\prm_i^t)}^4 + \frac{1}{n}\sum_{j=1}^{n}\norm{\grd f_j^t-\grd f_j(\prm_i^t)}^4\right).
\end{align*}
\normalsize Applying A\ref{ass:smooth}, we can derive
\begin{align}\label{eqaa}
    {\sum_{i=1}^{n}\Big\|{\grd f_i^t-\frac{1}{n}\sum_{j=1}^{n}\grd f_j^t}\Big\|^4} \leq 8n\varsigma^4 + 128L^4\norm{\CSE{t}}^4_F.
\end{align}
Combining (\ref{eqbb}) and (\ref{eqaa}), we have 
\begin{align*}
    & \EE_t \left[ \normtxt{{\bm U \bm U}^\top \Tgrd F^t}_F^4 \right] 
    \leq 54n^2(\bar{\sigma}_{0}^4+4\varsigma^4)  
    \\
    & + 432 n(\bar{\sigma}_{1}^4 + 8L^4)\norm{\CSE{t}}^4_F + 432n^2 \bar{\sigma}_{1}^4 \normtxt{\Bprm^t-\prm^\star}^4.
\end{align*}
Combining with (\ref{eq:bb}) and the step size condition $\sup_{t\geq 1}\gamma_{t}^4 \leq \frac{\rho^4}{864n(\bar{\sigma}_{1}^4 + 8 L^4)}$, we have \eqref{eq:ff} and the proof is concluded.
\end{proof}

\section{Missing Proofs for Smooth Case}
\subsection{Proof of Corollary \ref{cor:1}}\label{app:ncvx1}
We first show the following descent lemma:
\begin{lemma}\label{lem:aa}
Under A\ref{ass:smooth},\ref{ass: graph},\ref{ass:SecOrdMom}[$\sigma_{0}=\sigma$, $\sigma_{1}=0$], if $\sup_{t \geq 1} \gamma_{t} \leq\frac{1}{4L}$, then for any $t \geq 0$, it holds
    \begin{align} \label{eq:concl-aa}
        \EE_t[ f( \Bprm^{t+1} ) ] &\leq  f( \Bprm^t ) - \frac{\gamma_{t+1}}{4} \| \grd f( \Bprm^t ) \|^2 + \frac{ \gamma_{t+1}^2 L \sigma^2}{2n} \\
        &+ {2}\gamma_{t+1} \frac{L^2}{n} \norm{ \CSE{t} }^2_F. \nonumber
    \end{align}
\end{lemma}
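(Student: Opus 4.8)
The plan is to apply the standard descent inequality for $L$-smooth functions to the averaged recursion \eqref{eq:avg_it}, and then to absorb the error caused by evaluating the stochastic gradients at the local iterates $\prm_i^t$ rather than at the average $\Bprm^t$. First I would note that $f = (1/n)\sum_{i=1}^n f_i$ is itself $L$-smooth, being an average of $L$-smooth functions under A\ref{ass:smooth}-\ref{item:lips}. Writing the descent direction as $\bm g^t := (1/n)\sum_{i=1}^n \grd \ell_i(\prm_i^t; Z_i^{t+1})$, so that $\Bprm^{t+1} - \Bprm^t = -\gamma_{t+1}\bm g^t$, the smoothness inequality gives
\[
f(\Bprm^{t+1}) \leq f(\Bprm^t) - \gamma_{t+1}\langle \grd f(\Bprm^t), \bm g^t\rangle + \tfrac{L\gamma_{t+1}^2}{2}\|\bm g^t\|^2.
\]
Taking $\EE_t[\cdot]$ and using the unbiasedness in A\ref{ass:SecOrdMom} (together with $\prm_i^t$ being measurable w.r.t.\ the filtration while $Z_i^{t+1}$ is a fresh sample), the conditional mean of $\bm g^t$ is $\bar{\bm g}^t := (1/n)\sum_{i=1}^n \grd f_i(\prm_i^t)$.

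Next I would split the second-moment term as $\EE_t\|\bm g^t\|^2 = \|\bar{\bm g}^t\|^2 + \EE_t\|\bm g^t - \bar{\bm g}^t\|^2$. Since the agents draw their samples independently, the noise terms $\grd \ell_i(\prm_i^t;Z_i^{t+1}) - \grd f_i(\prm_i^t)$ are conditionally zero-mean and independent across $i$, so the cross terms vanish and A\ref{ass:SecOrdMom}[$\sigma_0=\sigma,\sigma_1=0$] yields $\EE_t\|\bm g^t-\bar{\bm g}^t\|^2 \leq (1/n^2)\sum_{i=1}^n \sigma^2 = \sigma^2/n$. This produces the noise term $\tfrac{\gamma_{t+1}^2 L\sigma^2}{2n}$.

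The heart of the argument is the inner-product (bias) term. I would use the polarization identity $-\langle \grd f(\Bprm^t), \bar{\bm g}^t\rangle = -\tfrac12\|\grd f(\Bprm^t)\|^2 - \tfrac12\|\bar{\bm g}^t\|^2 + \tfrac12\|\bar{\bm g}^t - \grd f(\Bprm^t)\|^2$, so that the $-\tfrac{\gamma_{t+1}}{2}\|\bar{\bm g}^t\|^2$ contribution combines with $+\tfrac{L\gamma_{t+1}^2}{2}\|\bar{\bm g}^t\|^2$ into $-\tfrac{\gamma_{t+1}}{2}(1-L\gamma_{t+1})\|\bar{\bm g}^t\|^2 \le 0$ whenever $\gamma_{t+1} \le 1/(4L) < 1/L$, and may therefore be discarded. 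It then remains to bound the gradient-consensus residual: since $\grd f(\Bprm^t) = (1/n)\sum_i \grd f_i(\Bprm^t)$, the difference $\bar{\bm g}^t - \grd f(\Bprm^t) = (1/n)\sum_i(\grd f_i(\prm_i^t) - \grd f_i(\Bprm^t))$ is controlled by $L$-smoothness and Jensen's (Cauchy--Schwarz) inequality as $\|\bar{\bm g}^t - \grd f(\Bprm^t)\|^2 \le (L^2/n)\sum_i\|\prm_i^t-\Bprm^t\|^2 = (L^2/n)\|\CSE{t}\|_F^2$. Collecting the terms gives the claim, in fact with the sharper coefficients $-\tfrac{\gamma_{t+1}}{2}$ and $\tfrac{\gamma_{t+1}}{2}\tfrac{L^2}{n}$, so the stated $-\tfrac{\gamma_{t+1}}{4}$ and $2\gamma_{t+1}\tfrac{L^2}{n}$ hold with room to spare (equivalently, a cruder Young's inequality step reproduces exactly the stated constants).

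There is no deep obstacle here --- this is a standard decentralized descent lemma --- so the difficulty is purely bookkeeping. The two points requiring care are (i) the conditional-independence justification for the variance bound, which relies on each agent drawing $Z_i^{t+1}$ independently given the current filtration, and (ii) the Jensen step converting $\tfrac1n\sum_i L\|\prm_i^t-\Bprm^t\|$ into the Frobenius-norm quantity $\tfrac{L^2}{n}\|\CSE{t}\|_F^2$ while preserving the correct $1/n$ scaling. The step-size threshold $1/(4L)$ is used only to render the $\|\bar{\bm g}^t\|^2$ term nonpositive and to leave slack in the final constants.
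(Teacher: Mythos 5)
Your proof is correct and follows essentially the same route as the paper's: the $L$-smoothness descent inequality applied to the averaged recursion \eqref{eq:avg_it}, conditional unbiasedness and cross-agent independence to obtain the $\sigma^2/n$ variance term, and Jensen plus Lipschitz gradients to convert the bias $\bar{\bm g}^t - \grd f(\Bprm^t)$ into $(L^2/n)\|\CSE{t}\|_F^2$. The only difference is cosmetic: the paper handles the cross term with Young's inequality and then bounds $\|\bar{\bm g}^t\|^2 \le 2\|\grd f(\Bprm^t)\|^2 + 2\|\bar{\bm g}^t - \grd f(\Bprm^t)\|^2$, whereas your exact polarization identity cancels $\|\bar{\bm g}^t\|^2$ against the smoothness quadratic, giving the slightly sharper constants you note while still implying the stated bound.
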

\begin{proof}

Using A\ref{ass:smooth}-\ref{item:lips} and the update rule \eqref{eq:avg_it}, we have{\small 
\begin{align*}
f( \Bprm^{t+1} ) \leq  f( \Bprm^t )  - \langle \grd f( \Bprm^t ) | {\textstyle \frac{ \gamma_{t+1}}{n}} \! \sum_{i=1}^n \! \Tgrd f_i^t\rangle  +   {\textstyle \frac{\gamma_{t+1}^2 L}{2 n^2} }\left\| \sum_{i=1}^n \Tgrd f_i^t \right\|^2.
\end{align*}}Taking the conditional expectation $\EE_t[\cdot]$ on both sides yields
\begin{align} \label{eq:expF}
 \!\EE_t[ f( \Bprm^{t+1} ) ] &\leq f( \Bprm^t ) \!-\! \gamma_{t+1} \textstyle \Pscal{ \grd f( \Bprm^t ) }{ \frac{1}{n} \sum_{i=1}^n \grd f_i^t }\\
&\textstyle \quad + \gamma_{t+1}^2 \frac{L}{2} \EE_t \left[ \left\| \frac{1}{n} \sum_{i=1}^n \Tgrd f_i^t \right\|^2 \right]. \nonumber
\end{align}
We note that the second term is lower bounded by
\begin{align*}
&\textstyle \pscal{ \grd f( \Bprm^t ) }{ \frac{1}{n} \sum_{i=1}^n \grd f_i^t } \\
&\geq \textstyle \frac{1}{2} \| \grd f( \Bprm^t ) \|^2- \frac{1}{2} \left\| \frac{1}{n} \sum_{i=1}^n \big( \grd f_i( \Bprm^t) - \grd f_i^t \big) \right\|^2 ,
\end{align*}
and the last term can be upper bounded by
\begin{align*}
& \textstyle \EE_t \left[ \left\| \frac{1}{n} \sum_{i=1}^n \Tgrd f_i^t \right\|^2 \right] 
\leq \frac{\sigma^2}{n} + \left\| \frac{1}{n} \sum_{i=1}^n \grd f_i^t \right\|^2 \\
& \textstyle \leq \frac{\sigma^2}{n} + 2 \| \grd f( \Bprm^t) \|^2 + 2 \left\| \frac{1}{n} \sum_{i=1}^n \big( \grd f_i( \Bprm^t) - \grd f_i^t \big) \right\|^2,
\end{align*}
where we used A\ref{ass:SecOrdMom} in the first inequality. Substituting into \eqref{eq:expF} and using the step size condition $\sup_{t\geq 1}\gamma_{t} \leq \frac{1}{4L}$ gives
\beq \label{eq:expF1}
\EE_t[ f( \Bprm^{t+1} ) ] \leq f( \Bprm^t ) - \frac{\gamma_{t+1}}{4} \| \grd f( \Bprm^t ) \|^2 + \frac{ \gamma_{t+1}^2 L \sigma^2}{2n}\\
\textstyle + {2} \gamma_{t+1} \left\| \frac{1}{n} \sum_{i=1}^n \big( \grd f_i( \Bprm^t) - \grd f_i( \prm_i^t) \big) \right\|^2. \nonumber
\eeq
Observe that by A\ref{ass:smooth}-\ref{item:lips}, the last term is bounded by:
\begin{align*}
& \textstyle \left\| \frac{1}{n} \sum_{i=1}^n \big( \grd f_i( \Bprm^t) - \grd f_i^t \big) \right\|^2 
\leq \frac{L^2}{n} \norm{\BPrm^t-\Prm^t}^2_F.
\end{align*}
Substituting back into \eqref{eq:expF1} leads to \eqref{eq:concl-aa}.
\end{proof}
To prove Corollary~\ref{cor:1}, we construct the Lyapunov function:
\beqq
{\tt V}^{t} := \EE \left[ f( \Bprm^{t} ) - f^\star + \gamma_{t}\norm{\CSE{t}}^2_F \frac{{4}L^2}{\rho n} \right], ~~\forall t\geq 0.
\eeqq 
Combining Lemma \ref{lem:aa} and Lemma \ref{lem:ceb-ncvx}, we can get
\begin{align*}
\textstyle \sum_{t=0}^{T-1} \gamma_{t+1} \norm{\grd f(\Bprm^t)}^2 & \textstyle \leq 4 {\tt V}^{0} + \frac{2L \sigma^2}{n} \sum_{t=0}^{T-1} \gamma_{t+1}^{2}  \\
& \textstyle + \frac{{32} L^2 (\varsigma^2 + \sigma^2)}{\rho^2} \sum_{t=0}^{T-1} \gamma_{t+1}^{3}.
\end{align*}
Denote ${\sf D} := f(\Bprm^0) - f^\star$ and set the stepsize as $\gamma_{t} = (1/\sqrt{T}) \sqrt{2 {\sf D} n / (L \sigma^2)}$. Let {\sf T} be an random variable chosen uniformly from $\{0, \dots, T-1\}$, then 
\begin{align*}
    \EE[\normtxt{\grd f(\Bprm^{\sf T})}^2] \leq 
    \sqrt{\frac{32 {\sf D} L \sigma^2 }{n T}} + \frac{{64} L {\sf D} (\varsigma^2 +\sigma^2)}{ (\sigma^2/n) \rho^2 T} + \frac{ 16 L^2 \| \CSE{0} \|_F^2 }{ \rho n T }.
\end{align*}
which implies the bound (\ref{eq:corNor}) in Corollary \ref{cor:1}.

\subsection{Proof of Lemma \ref{lem:des2-ncvx}}\label{app:ncvx2}
 
We develop Lemma \ref{lem:des2-ncvx} from \eqref{eq:expF1}. Recall that ${\cal M}_i (\prm^\prime; \prm) \eqdef \grd f_i(\prm^{\prime}) - f_i(\prm) - \grd^2 f_i(\prm)(\prm^\prime-\prm)$ and \eqref{eq-is}, 
\beqq
\begin{aligned}
    & \frac{1}{n} \sum_{i=1}^{n}\left(\grd f_i^t-\grd f_i(\Bprm^t)\right) \\
    & = \frac{1}{n} \sum_{i=1}^{n} \left( {\cal M}_i(\prm_i^t; \Bprm^t) + [\grd^2 f_i(\Bprm^t)-\grd^2 f(\Bprm^t)](\prm_i^t - \Bprm^t) \right).
\end{aligned}
\eeqq
Under A\ref{ass:Hsmooth}, we have $\| \sum_{i=1}^n {\cal M}_i(\prm_i^t; \Bprm^t) \| \leq \frac{L_H}{2} \| \CSE{t} \|_F^2 $. Applying the triangular inequality leads to
\small \beqq
\begin{aligned}
\quad \norm{ \frac{1}{n} \sum_{i=1}^{n}\left(\grd f_i^t-\grd f_i(\Bprm^t)\right)} \leq \frac{L_H}{2n} \norm{\CSE{t} }_F^2 + \frac{\varsigma_H}{n} \sum_{i=1}^{n} \norm{\prm_i^t - \Bprm^t}.
\end{aligned}
\eeqq
\normalsize Taking square on both sides,
\begin{align} \label{eq:high-order-bd-1st}
    &\norm{ \frac{1}{n} \sum_{i=1}^{n}\left(\grd f_i^t-\grd f_i(\Bprm^t)\right)}^2
    \leq \frac{L_H^2}{2n^2} \norm{\CSE{t} }_F^4 + \frac{2\varsigma_H^2}{n} \norm{\CSE{t}}_F^2.
\end{align}
Substituting back into \eqref{eq:expF1} and requiring $\sup_{t\geq 1}\gamma_{t} \leq 1/(4L)$ leads to inequality (\ref{eq:des}). This concludes the proof.

\section{Missing Proofs for Strongly Convex Case}

\subsection{Proof of Corollary \ref{cor:2}}\label{app:cvx1}

\begin{lemma}[Descent Lemma]\label{lem:bb}
Under A\ref{ass:smooth},\ref{ass:str_cvx},\ref{ass:SecOrdMom}[$\sigma_0=\sigma_1=\sigma$], let the step sizes satisfy $\sup_{k\geq 1}\gamma_{k}\leq {\mu}/{( 8(\sigma^2+L^2) )}$, then we have the following inequality
\begin{eqnarray}\label{eq:ee}
   \begin{aligned}
       \EE_{t}\normtxt{\Tprm^{t+1}}^2 
       &\leq \left(1-\frac{\mu}{2}\gamma_{t+1}\right)\normtxt{\Tprm^{t}}^2 + \frac{2\sigma^2}{n}\gamma_{t+1}^2
       \\
       &\quad + \left( 4(\sigma^2+L^2) \gamma_{t+1}^2 + \frac{L^2}{ \mu}\gamma_{t+1}\right) \textstyle \frac{1}{n}\norm{\CSE{t}}_F^2.
   \end{aligned}
\end{eqnarray}
\end{lemma}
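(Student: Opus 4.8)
The plan is to run a standard SGD-style descent analysis on the averaged recursion \eqref{eq:avg_it}, but to replace the crude Lipschitz coupling of the consensus error with the sharper $L$-smoothness estimate, and to carefully track the growth term generated by the $\sigma_1>0$ part of A\ref{ass:SecOrdMom}. Writing $\Tprm^{t+1} = \Tprm^t - \tfrac{\gamma_{t+1}}{n}\sum_{i=1}^n \Tgrd f_i^t$, I would first expand the square and take the conditional expectation $\EE_t[\cdot]$. Unbiasedness ($\EE_t[\Tgrd f_i^t]=\grd f_i^t$) yields
\[
\EE_t\norm{\Tprm^{t+1}}^2 = \norm{\Tprm^t}^2 - 2\gamma_{t+1}\,\pscal{\Tprm^t}{\tfrac1n\textstyle\sum_{i=1}^n\grd f_i^t} + \gamma_{t+1}^2\,\EE_t\Big\|\tfrac1n\textstyle\sum_{i=1}^n\Tgrd f_i^t\Big\|^2,
\]
so the task reduces to lower bounding the inner product (to extract contraction) and upper bounding the second moment.

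For the inner product I would split $\tfrac1n\sum_i\grd f_i^t = \grd f(\Bprm^t) + r_t$ with $r_t := \tfrac1n\sum_i(\grd f_i^t - \grd f_i(\Bprm^t))$. The strong monotonicity implied by A\ref{ass:str_cvx} (using $\grd f(\prm^\star)=0$) gives $\pscal{\grd f(\Bprm^t)}{\Tprm^t}\geq\mu\norm{\Tprm^t}^2$, contributing $-2\mu\gamma_{t+1}\norm{\Tprm^t}^2$. The residual I would control by Young's inequality, $-2\gamma_{t+1}\pscal{\Tprm^t}{r_t}\leq\mu\gamma_{t+1}\norm{\Tprm^t}^2 + \tfrac{\gamma_{t+1}}{\mu}\norm{r_t}^2$, and then Jensen's inequality together with A\ref{ass:smooth}-\ref{item:lips} to get $\norm{r_t}^2\leq\tfrac1n\sum_i\norm{\grd f_i^t-\grd f_i(\Bprm^t)}^2\leq\tfrac{L^2}{n}\norm{\CSE{t}}_F^2$; this is exactly where the sharper smoothness coupling enters and produces the $\tfrac{L^2}{\mu}\gamma_{t+1}\cdot\tfrac1n\norm{\CSE{t}}_F^2$ term. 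The Young constant $\mu$ is chosen so that the contraction coefficient from these two pieces is $1 - 2\mu\gamma_{t+1} + \mu\gamma_{t+1} = 1 - \mu\gamma_{t+1}$, i.e.\ a surplus of $\tfrac{\mu}{2}\gamma_{t+1}$ beyond the target.

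For the second moment I would use the bias--variance decomposition (independence of $\{Z_i^{t+1}\}_i$ across agents), $\EE_t\|\tfrac1n\sum_i\Tgrd f_i^t\|^2 = \|\tfrac1n\sum_i\grd f_i^t\|^2 + \tfrac1{n^2}\sum_i\EE_t\|\Tgrd f_i^t - \grd f_i^t\|^2$. The deterministic part is bounded by $2L^2\norm{\Tprm^t}^2 + \tfrac{2L^2}{n}\norm{\CSE{t}}_F^2$ via $\norm{\grd f(\Bprm^t)}\leq L\norm{\Tprm^t}$ and the same smoothness/Jensen estimate. For the noise part I would invoke A\ref{ass:SecOrdMom}[$\sigma_0=\sigma_1=\sigma$] to obtain $\tfrac1{n^2}\sum_i(\sigma^2+\sigma^2\norm{\prm_i^t-\prm^\star}^2)$, and crucially use the orthogonal split $\sum_i\norm{\prm_i^t-\prm^\star}^2 = \norm{\CSE{t}}_F^2 + n\norm{\Tprm^t}^2$ (the cross term vanishes since $\sum_i(\prm_i^t-\Bprm^t)=0$). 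Collecting and loosening numerical constants, multiplying by $\gamma_{t+1}^2$ supplies the $\tfrac{2\sigma^2}{n}\gamma_{t+1}^2$ floor and the $4(\sigma^2+L^2)\gamma_{t+1}^2\cdot\tfrac1n\norm{\CSE{t}}_F^2$ contribution in \eqref{eq:ee}.

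The main obstacle — and the reason for the step-size restriction $\sup_t\gamma_t\leq\mu/(8(\sigma^2+L^2))$ — is the unwanted $\gamma_{t+1}^2(2L^2+\sigma^2/n)\norm{\Tprm^t}^2$ term produced by the growth condition $\sigma_1>0$, which is absent when $\sigma_1=0$ and cannot simply be discarded. I would absorb it into the contraction using the $\tfrac{\mu}{2}\gamma_{t+1}$ surplus retained above: bounding $2L^2+\sigma^2/n\leq 2(\sigma^2+L^2)$ and using $\gamma_{t+1}\leq\mu/(8(\sigma^2+L^2))$ gives $2(\sigma^2+L^2)\gamma_{t+1}^2\leq\tfrac{\mu}{4}\gamma_{t+1}$, so the net coefficient of $\norm{\Tprm^t}^2$ is at most $1 - \mu\gamma_{t+1} + \tfrac{\mu}{4}\gamma_{t+1}\leq 1 - \tfrac{\mu}{2}\gamma_{t+1}$, which is exactly \eqref{eq:ee}. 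The only genuinely delicate bookkeeping is guaranteeing that this growth term is dominated by the linear-in-$\gamma$ contraction rather than left at the $\gamma^2$ noise floor; the remaining estimates are routine Young/Jensen manipulations.
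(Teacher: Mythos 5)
Your proposal is correct and follows essentially the same route as the paper's proof of Lemma~\ref{lem:bb} in Appendix~\ref{app:cvx1}: strong monotonicity of $\grd f$ for the $-2\mu\gamma_{t+1}\|\Tprm^t\|^2$ contraction, Young's inequality with constant $\mu$ to convert the consensus residual into the $\frac{L^2}{n\mu}\gamma_{t+1}\|\CSE{t}\|_F^2$ term, a second-moment bound yielding the $\frac{2\sigma^2}{n}\gamma_{t+1}^2$ and $\frac{4(\sigma^2+L^2)}{n}\gamma_{t+1}^2\|\CSE{t}\|_F^2$ contributions, and absorption of the resulting $O(\gamma_{t+1}^2)\|\Tprm^t\|^2$ growth term into the contraction via the step-size restriction. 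The only cosmetic difference is that you use the exact bias--variance and orthogonal splits where the paper uses Cauchy--Schwarz with a factor of $2$ and the bound $\|\prm_i^t-\prm^\star\|^2\leq 2\|\prm_i^t-\Bprm^t\|^2+2\|\Tprm^t\|^2$, which leads to the same constants after loosening.
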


\begin{proof}
Using the recursion (\ref{eq:avg_it}), we have
\begin{align}\label{s:eq1}
    &\EE_{t} \normtxt{\Bprm^{t+1} - \prm^\star}^2 
    \\
    &= \EE_t \normtxt{\Bprm^{t} - \prm^\star}^2 + \gamma_{t+1}^2 \EE_t\bigg\| \frac{1}{n} \sum_{i=1}^{n}\Tgrd f_i^t \bigg\|^2 
        \nonumber\\
        &\quad - 2\gamma_{t+1}\Pscal{\Bprm^t-\prm^\star}{ \frac{1}{n} \sum_{i=1}^{n}\left(\grd f_i(\Bprm^t)-\grd f_i(\prm^\star)\right)}
        \nonumber\\
        &\quad - 2\gamma_{t+1}\Pscal{\Bprm^t-\prm^\star}{ \frac{1}{n} \sum_{i=1}^{n}\left(\grd f_i^t-\grd f_i(\Bprm^t)\right)}
    \nonumber\\
    &\overset{(a)}{\leq} (1-2\gamma_{t+1}\mu)\EE_t \norm{\Bprm^{t} - \prm^\star}^2 + \gamma_{t+1}^2 \EE_t \bigg\| \frac{1}{n} \sum_{i=1}^{n}\Tgrd f_i^t \bigg\|^2 
    \nonumber\\
        &\quad + 2\gamma_{t+1}\Pscal{\prm^\star-\Bprm^t}{ \frac{1}{n} \sum_{i=1}^{n}\left(\grd f_i^t-\grd f_i(\Bprm^t)\right)}
    \nonumber\\
    &\overset{(b)}{\leq} (1-\gamma_{t+1}\mu)\EE_t \norm{\Bprm^{t} - \prm^\star}^2 + \gamma_{t+1}^2 \EE_t \bigg\| \frac{1}{n} \sum_{i=1}^{n}\Tgrd f_i^t \bigg\|^2 
        \nonumber\\
        &\quad + \frac{\gamma_{t+1}}{\mu} \bigg\| \frac{1}{n} \sum_{i=1}^{n}\left(\grd f_i^t-\grd f_i(\Bprm^t)\right) \bigg\|^2,
        \nonumber
\end{align}
where (a) is due to the first-order optimality condition $\frac{1}{n}\sum_{i=1}^{n}\grd f_i(\prm^\star)=0$ and the $\mu$-strongly convexity of loss function, (b) is obtained by $a b \leq \frac{a^{2}}{2 \mu}+\frac{\mu b^{2}}{2}$. We consider:
\begin{align*}
    &\frac{\gamma_{t+1}}{\mu}\Big\|{ \frac{1}{n} \sum_{i=1}^{n}\left(\grd f_i^t-\grd f_i(\Bprm^t)\right)}\Big\|^2
    \\
    &\leq \frac{\gamma_{t+1}}{n \mu} \sum_{i=1}^{n} \norm{\grd f_i^t - \grd f_i(\Bprm^t)}^2
    \leq \frac{L^2}{n \mu} \gamma_{t+1} \norm{\CSE{t}}^2_F.
\end{align*}
Next, we bound the second term in the RHS of (\ref{s:eq1}).
\small \begin{align}
    &\EE_t \norm{ \frac{1}{n} \sum_{i=1}^{n}\Tgrd f_i^t}^2 
    \label{eq_ss}\\
    &\leq 2\EE_{t}\norm{\frac{1}{n}\sum_{i=1}^{n} ( \Tgrd f_i^t - \grd f_i^t) }^2 \!\!+\! 2\EE_{t}\norm{\frac{1}{n}\sum_{i=1}^{n} (\grd f_i^t - \grd f_i(\thstr))}^2
    \nonumber \\
    &\overset{(a)}{\leq} \frac{2}{n^2} \sum_{i=1}^{n} \EE_{t} \norm{\Tgrd f_i^t - \grd f_i^t}^2 + \frac{2}{n}\sum_{i=1}^{n} \EE_{t} \norm{\grd f_i^t - \grd f_i(\thstr)}^2
    \nonumber \\
    &\overset{(b)}{\leq} \frac{2}{n^2} \sum_{i=1}^{n} \sigma^2 (1+\norm{\prm_i^t - \thstr}^2) + \frac{2}{n} \sum_{i=1}^{n} L^2 \norm{\prm_i^t - \thstr}^2
    \nonumber \\
    &\leq \frac{2\sigma^2}{n} + \frac{4(\sigma^2 + L^2)}{n}\norm{\CSE{t}}_F^2 + 4(\sigma^2 + L^2)\norm{\Bprm^t - \thstr}^2,
    \nonumber
\end{align}
\normalsize where (a) is due to $\EE_t [\Tgrd f_i^t - \grd f_i^t] = 0$, (b) is due to A\ref{ass:smooth}, A\ref{ass:SecOrdMom}. 

Substituting the above upper bound to (\ref{s:eq1}) and using the notation $\Tprm^{t+1} \eqdef  \Bprm^{t+1}-\prm^\star$, we have
\beqq
\begin{aligned}
    & \EE_t\normtxt{\Tprm^{t+1}}^2 \leq  \left[1-\gamma_{t+1}\mu + 4(\sigma^2 
    + L^2)\gamma_{t+1}^2\right] \normtxt{\Tprm^t}^2
    \\
    &\quad 
    + \frac{2\sigma^2}{n}\gamma_{t+1}^{2}
    + \left(\frac{4(\sigma^2 + L^2)}{n}\gamma_{t+1}^2 + \frac{L^2}{n\mu} \gamma_{t+1} \right) \norm{\CSE{t}}_F^2 .
\end{aligned}
\eeqq
Setting $\sup_{k\geq 1}\gamma_{t+1}\leq \frac{\mu}{8(\sigma^2 + L^2)}$ concludes the proof.
\end{proof}

Our plan is to control $\EE \normtxt{\Tprm^t}^2$, $\EE \normtxt{ \CSE{t} }_F^2$ simultaneously. Consider the following Lyapunov function,
\beq\label{eq:u0}
\textstyle {\tt U}_{t+1} \eqdef \EE \left[ \normtxt{\widetilde{\prm}^{t+1}}^2 + \frac{4 L^2}{n \mu \rho}\gamma_{t+1}\norm{\CSE{t+1}}^2_F  \right], ~~ \forall t\geq 0.
\eeq
Combining the results from Lemma \ref{lem:bb} and Lemma \ref{lem:ceb-scvx}, if 
$\sup_{t\geq 1} \gamma_{t} \leq \min \big\{
\frac{\mu\rho}{8 \sigma L}, \frac{\rho}{2\mu}, \frac{L}{\sqrt{8\mu(\sigma^2+L^2)}}
\big\}$
then the following recursion holds for any $t\geq 1$,
\begin{align*}
    {\tt U}_{t+1} \leq (1-\frac{\mu}{4}\gamma_{t+1}) {\tt U}_{t}
    + \left( \frac{8(\sigma^2+\varsigma^2)L^2}{\mu\rho^2}\gamma_{t+1} +\frac{2\sigma^2}{n}  \right) \gamma_{t+1}^2
\end{align*}
Furthermore, suppose that the step sizes satisfy $\gamma_{t-1}/\gamma_{t} \leq \min\left\{\sqrt{1+\mu/4 \gamma_{t}^2}, \sqrt{1+\mu/4 \gamma_{t}^3} \right\}$, the recursion can be solved. Applying the auxiliary Lemma \ref{lem:aux} yields
\begin{align*}
    {\tt U}_{t+1} \leq \prod_{i=1}^{t+1}(1-\frac{\mu}{4}\gamma_{i}) {\tt U}_{0} + \frac{16\sigma^2}{n \mu}\gamma_{t+1} + \frac{64 L^2(\sigma^2 + \varsigma^2)}{\mu^2 \rho^2}\gamma_{t+1}^{2}.
\end{align*}
This immediately leads to 
\begin{align}\textstyle 
    \EE\normtxt{\widetilde{\prm}^{t+1}}^2 \leq  \prod_{i=1}^{t+1}(1 -\frac{\mu}{4}\gamma_{i}) {\tt U}_0 \!+\! \frac{16\sigma^2}{n\mu} \gamma_{t+1} + \frac{64(\sigma^2 + \varsigma^2) L^2 }{\mu^2 \rho^2} \gamma_{t+1}^2.
    \label{eq:standard-bound-scvx}
\end{align}
As the first term decreases sub-geometrically, we obtain Corollary \ref{cor:2} when $t$ is sufficiently large.

\subsection{Proof of Lemma \ref{lem:des2-scvx}}\label{app:cvx2}

We aim to derive a tighter bound than Lemma \ref{lem:bb}. Continuing the derivation from (\ref{s:eq1}) and applying \eqref{eq:high-order-bd-1st} yield:
\begin{align}
\EE_t \norm{\Tprm^{t+1}}^2 
&\leq (1-\mu\gamma_{t+1}) \norm{\Tprm^{t}}^2 +\gamma_{t+1}^2 \EE_t \norm{ \frac{1}{n} \sum_{i=1}^{n}\Tgrd f_i^t}^2  \notag
\\
&\quad + \frac{\gamma_{t+1}L_H^2}{2n^2 \mu }\norm{\CSE{t}}_F^4  + \frac{2\varsigma_H^2 \gamma_{t+1}}{ \mu n}\norm{\CSE{t}}_F^2 .\label{s:eq2}
\end{align}
Substituting (\ref{eq_ss}) into (\ref{s:eq2}) leads to
\begin{align*}
    &\quad \EE_t \norm{\Tprm^{t+1}}^2 
    \\
    & \leq \left[1 \!-\! \mu \gamma_{t+1} + 4(\sigma^2+L^2)\gamma_{t+1}^2\right] \norm{\Tprm^{t}}^2 +  \frac{\gamma_{t+1}L_H^2}{2n^2 \mu }\norm{\CSE{t}}_F^4
    \\
    &\quad + \norm{\CSE{t}}^2_F \left( \gamma_{t+1}^2\frac{4(\sigma^2+L^2)}{n} +  \frac{2\varsigma^2_H \gamma_{t+1}}{n\mu}\right) + \frac{2\sigma^2}{n} \gamma_{t+1}^2.
\end{align*}
Using the condition $\sup_{t\geq 1}\gamma_{t}\leq \frac{\mu}{8(\sigma^2+L^2)}$ completes the proof.

\subsection{Proof of Lemma \ref{lem:hp-bnd}} \label{app:hp-bnd}
We first apply Corollary~\ref{cor:2} to prove that
\beq\label{eq:dd}
    \EE\normtxt{\Tprm^{t+1}}^2 \leq \frac{32 \sigma^2}{n \mu}\gamma_{t+1} \eqdef {\rm C} \gamma_{t+1},
\eeq
for any $t \geq \max\{ t_0, t_1 \}$, where $t_0, t_1$ will be determined as follows. 
Notice that under the premise of \eqref{eq:standard-bound-scvx}, the desired \eqref{eq:dd} is implied by 
\begin{align*}
    \textstyle 
    \frac{8\sigma^2}{n\mu}\gamma_{t+1} \geq \frac{64(\sigma^2+\varsigma^2)L^2}{\mu^2\rho^2}\gamma_{t+1}^2,
    ~~
    \frac{8\sigma^2}{n\mu}\gamma_{t+1} \geq \prod_{i=1}^{t+1}(1-\frac{\mu}{4}\gamma_{i}) {\tt U}_{0}.
\end{align*}
Substituting $\gamma_{t}=a_0 /(a_1 + t)$ into the first inequality leads to the requirement that
\beq\label{eq:t1}
t \geq t_{1} := \frac{8 a_{0}nL^2(1+\varsigma^2/\sigma^2)}{\mu\rho^2}-a_{1}.
\eeq
We then solve the second inequality which can be satisfied by
\[
    \frac{8\sigma^2}{n\mu}\gamma_{t+1} \geq \exp\{-\frac{\mu}{4}\sum_{i=1}^{t+1}\gamma_{i}\}{\tt U}_{0} \geq \prod_{i=1}^{t+1}(1-\frac{\mu}{4}\gamma_{i}) {\tt U}_{0}.
\]
We define 
\beq\label{eq:t0}
    t_0 \eqdef \inf \Big\{ t\geq0 \mid \frac{8\sigma^2}{n\mu}\gamma_{t+1} \geq {\tt U}_{0}\exp\{-\frac{\mu}{4}(t+1)\gamma_{t+1}\} \Big\},
\eeq
such that the second inequality holds for any $t \geq t_0$. Furthermore, as $\gamma_t$ is non-increasing, the $t_0$ defined in the above is finite. Finally, \eqref{eq:dd} holds for any $t \geq \max \{ t_0, t_1 \}$.

Next, we derive a high probability bound for $\normtxt{\Tprm^t}^2$. Our idea is to construct a non-negative sequence $\{\delta_{t}\}_{t\geq 1}$ such that
\beq \label{eq:whp_tdelta}
    \PP({\normtxt{\Tprm^{t}}^2\geq \delta_{t}^2})\leq \tdelta\gamma_{t}^2,
\eeq 
for any $t\geq 1$. Using the Markov inequality, we obtain
\begin{align*}
    \PP({\normtxt{\Tprm^{t}}^2\geq \delta_t^2}) &\leq \frac{\EE[\normtxt{\Tprm^{t}}^2]}{\delta_t^2} \leq  \frac{{\rm C} \gamma_{t}}{\delta_t^2},
\end{align*}
where the last inequality is due to (\ref{eq:dd}). Setting $\delta_t^2 = {{\rm C}}/{(\tdelta\gamma_{t})}$ gives \eqref{eq:whp_tdelta}. 
Subsequently, the union bound shows that
\beqq 
\begin{aligned}
    \PP\left(\cap_{i=1}^{t+1}\left\{\normtxt{\Tprm^{i}}^2 \leq \delta_i^2\right\}\right)
    & = 1-\PP\left(\cup_{i=1}^{t+1}\left\{\normtxt{\Tprm^{i}}^2\geq \delta_i^2 \right\} \right)
    \\
    & \geq 1- \tdelta\sum_{i=1}^{t+1}\gamma_{i}^2 \geq 1-\tdelta\frac{a_0^2}{a_1} ,
\end{aligned}
\eeqq
where the last inequality due to the chain
\begin{align*}
    \sum_{i=1}^{t+1}\gamma_{i}^2 &\leq \sum_{i=1}^{+\infty}\gamma_{i}^2 = \sum_{i=1}^{+\infty}\frac{a_0^2}{(a_1+t)^2}
    \leq \int_{\RR_+} \frac{a_0^2}{(a_1+t)^2}{\rm d} t = \frac{a_0^2}{a_1}.
\end{align*}

\subsection{Proof of Theorem \ref{thm2}}\label{app:thm2}
We provide further details to the derivation of \eqref{eq:lt} and the proof of Theorem~\ref{thm2}. Combining Lemma  \ref{lem:ceb-scvx}, \ref{lem:hp-bnd} shows that
\small \begin{align}
    &\quad \EE_t \norm{\CSE{t+1}}^4_F \label{eq_lemres}
    \\
    &\leq (1-\frac{\rho}{2})\norm{\CSE{t}}_F^4 
     + 54 n^2  \left[\bar{\sigma}^4 + 4\varsigma^4 +8\bar{\sigma}^4\normtxt{\Tprm^t}^4\right]\frac{\gamma_{t+1}^4}{\rho^3}
    \nonumber\\
    &\leq (1-\frac{\rho}{2})\norm{\CSE{t}}_F^4 
     + 54 n^2  \left[\bar{\sigma}^4 + 4\varsigma^4 +8\bar{\sigma}^4\normtxt{\Tprm^t}^2\cdot \frac{\rm C}{\tdelta\gamma_{t}} \right]\frac{\gamma_{t+1}^4}{\rho^3}
     \nonumber
    \\
    &\leq (1-\frac{\rho}{2})\normtxt{\CSE{t}}_F^4 \!+\! \frac{54 n^2(\bar{\sigma}^4+4\varsigma^4)}{\rho^3}\gamma_{t+1}^4 
    \!+\! \frac{432 n^2\bar{\sigma}^4 {\rm C}}{\tdelta \rho^3}\gamma_{t+1}^3 \normtxt{\Tprm^t}^2,
    \nonumber
\end{align}
\normalsize holds with probability at least $1-\frac{\tdelta a_0^2}{a_1}$. 

Recall the definition of ${\tt L}_t$ in \eqref{eq:lyap-fct-main}. Combining Lemma \ref{lem:des2-scvx}, \ref{lem:ceb-scvx}, (\ref{eq_lemres}), the step size condition $\sup_{k\geq 1}\gamma_{k} \leq \min\big\{ \sqrt[3]{\frac{\mu}{8 c_2}} ,\frac{\rho}{\mu}\big\}$, shows that the following holds
\begin{align}
    {\tt L}_{t+1} &\leq (1-\frac{\mu}{4}\gamma_{t+1}){\tt L}_{t} + \frac{2\sigma^2}{n}\gamma_{t+1}^2 \label{eq-c}
    \\
    &\quad + {\rm D}\varsigma_H^2 \gamma_{t+1}^3 + {\rm E}\gamma_{t+1}^4
    + {\rm F}\gamma_{t+1}^5,~\forall~t \geq \max\{t_1 , t_2 \}. \notag
\end{align}
with probability at least $1-\tdelta a_0^2/a_1$.

With $\textstyle \frac{\gamma_{t+1}}{\gamma_{t}} \leq \min_{p \in \{2,3,4,5\}} \sqrt{1+\mu/4\gamma_{t}^{p}}$, 
solving the recursion (\ref{eq-c}) with Lemma \ref{lem:aux} gives 
\begin{align*} 
    {\tt L}_{t+1} & \leq \prod_{i=1}^{t+1}(1-\frac{\mu}{4}\gamma_{i}) {\tt L}_{0} + \frac{16 \sigma^2}{n\mu}\gamma_{t+1} + \frac{8 {\rm D}\varsigma_H^2}{\mu}\gamma_{t+1}^2 \notag
    \\
    &\quad + \frac{8 \rm E}{\mu}\gamma_{t+1}^3  + \frac{8 \rm F}{\mu}\gamma_{t+1}^4.
\end{align*}
The above inequality of ${\tt L}_{t+1}$ immediately leads to (\ref{eq_thm2_mse}) of Theorem \ref{thm2} as ${\tt L}_{t+1}$ is lower bounded by $\EE \normtxt{ \Tprm^{t+1} }^2 $.


\section{Analysis for Decentralized TD(0) Algorithm}

\subsection{Proof of Lemma \ref{lem:td_des}} \label{app:td0}
Recall that $\Tprm^{t+1} \eqdef \Bprm^{t+1}-\prm^\star$, we observe that,
\begin{align*}
    \norm{\Tprm^{t+1}}^2 = \norm{\Bprm^{t}-\prm^\star + \alpha_{t+1} \left[ \bar{\bm b}(\zeta^{t+1}) - {\bm A}(\zeta^{t+1})\Bprm^t \right]}^2.
\end{align*}
Taking the conditional expectation gives
\begin{align}
&\EE_{t} \norm{\Bprm^{t+1}-\prm^\star}^2 \notag
    \\
&=\norm{\Bprm^t-\prm^\star}^2 + \alpha_{t+1}^2 \EE_{t}\norm{\bar{\bm b}(\zeta^{t+1}) - {\bm A}(\zeta^{t+1})\Bprm^t}^2 \notag
    \\
&\quad - 2\alpha_{t+1} {\EE_{t} \Pscal{\Bprm^t-\prm^{\star}}{{\bm A}(\zeta^{t+1})\Bprm^t - \bar{\bm b}(\zeta^{t+1})}}.  \label{eq:split_quad}
\end{align}
Using ${\bm A} \prm^\star = \bar{\bm b}$, the last term in (\ref{eq:split_quad}) evaluates to
\begin{align*}
    & \Pscal{ \Bprm^t-\prm^{\star} } { {\bm A} \Bprm^t - \bar{\bm b} } = 
    \Pscal{ \Bprm^t-\prm^{\star} } { {\bm A} ( \Bprm^t - \prm^\star ) }
    \\
    & \geq {\lambdamin \norm{\Bprm^t - \prm^\star}^2},
\end{align*}
where $\lambda_{\text{min}}$ is the minimum eigenvalue of $\frac{{\bm A} + {\bm A}^\top}{2}$. For the second term in the RHS of \eqref{eq:split_quad}, we observe
\begin{align*} 
    &\EE_{t}\norm{\bar{\bm b}(\zeta^{t+1}) 
    - {\bm A}(\zeta^{t+1})\Bprm^t}^2 \\
    & = \EE_t \norm{ \bar{\bm b}( \zeta^{t+1} ) - \bar{\bm b} }^2 + \EE_t \norm{ {\bm A}(\zeta^{t+1}) \Bprm^t - {\bm A} \prm^\star }^2  \notag
    \\
    & \leq \frac{ \sigma^2 }{n} + \EE_t \norm{ ( {\bm A}(\zeta^{t+1}) - {\bm A} ) \prm^\star }^2 + \EE_t \norm{ {\bm A}(\zeta^{t+1}) ( \Bprm^t - \prm^\star) }^2  \notag
    \\
    & \leq \frac{ \sigma^2 }{n} + 4 \beta^2 \| \prm^\star \|^2 + \beta^2 \| \Bprm^t - \prm^\star \|^2 . \notag
\end{align*}
Substituting into \eqref{eq:split_quad} gives us
\begin{align*}
&\EE_{t} \| \Bprm^{t+1}-\prm^\star \|^2 \notag
    \\
& \leq \left( 1 - 2 \alpha_{t+1} \lambdamin + \alpha_{t+1}^2 \beta^2 \right) \| \Bprm^t-\prm^\star \|^2 \\
& + \alpha_{t+1}^2 ( { \sigma^2 }/{n} + 4 \beta^2 \| \prm^\star \|^2 ). \notag
\end{align*}
Observing that $\alpha_{t+1} \leq \lambdamin / \beta^2$ leads to $1 - 2 \alpha_{t+1} \lambdamin + \alpha_{t+1}^2 \beta^2 \leq 1 - \alpha_{t+1} \lambdamin$ yields the desired bound.


\section{Auxiliary Lemmas }

The following auxiliary lemma is quite standard, see \cite{qiangmulti} Appendix E for detailed proof.

\begin{lemma}\label{lem:aux}
    Let $a>0$, $p\in \ZZ_+$ and $\left\{\gamma_{k}\right\}_{k \geq 1}$ be a non-increasing sequence such that $\gamma_{1}<2 / a$. If $\gamma_{k-1}^p / \gamma_{k}^p \leq 1+(a / 2) \gamma_{k}^p$ for any $k \geq 1$, then for any $k \geq 2$,
\[\textstyle
    \sum_{j=1}^{k} \gamma_{j}^{p+1} \prod_{\ell=j+1}^{k}\left(1-\gamma_{\ell} a\right) \leq \frac{2}{a} \gamma_{k}^p.
\]
\end{lemma}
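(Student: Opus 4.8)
The plan is to prove the bound by induction on $k$, exploiting the one-step recursion satisfied by the weighted partial sums. Write $S_k := \sum_{j=1}^{k} \gamma_{j}^{p+1} \prod_{\ell=j+1}^{k}(1-a\gamma_{\ell})$ and observe that, after peeling off the $j=k$ term (whose product is empty, hence equal to $\gamma_k^{p+1}$) and factoring the common $(1-a\gamma_k)$ out of the remaining $j<k$ terms, $S_k$ obeys the scalar recursion $S_k = (1-a\gamma_k)\,S_{k-1} + \gamma_k^{p+1}$ with $S_1 = \gamma_1^{p+1}$. This collapses the claim about nested products into a purely one-dimensional statement about the sequence $\{S_k\}$, which is far more tractable.

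First I would dispatch the base case: since $\gamma_1 < 2/a$, we have $S_1 = \gamma_1^{p+1} = \gamma_1\cdot\gamma_1^{p} < \tfrac{2}{a}\gamma_1^{p}$, so the target bound $S_k \le \tfrac{2}{a}\gamma_k^{p}$ holds at $k=1$. For the inductive step I would assume $S_{k-1} \le \tfrac{2}{a}\gamma_{k-1}^{p}$ and substitute it into the recursion, splitting on the sign of the contraction factor $1-a\gamma_k$. The hypothesis $\gamma_1<2/a$ forces $a\gamma_k \le a\gamma_1 < 2$, so $1-a\gamma_k > -1$. In the degenerate regime $1-a\gamma_k < 0$ the contracted term is non-positive and one gets $S_k \le \gamma_k^{p+1} \le \tfrac{2}{a}\gamma_k^{p}$ immediately from $\gamma_k<2/a$; this is exactly why $\gamma_1<2/a$ (rather than the stronger $\gamma_1<1/a$) is the right hypothesis. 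In the main regime $1-a\gamma_k \ge 0$ the induction hypothesis may be substituted monotonically, yielding $S_k \le (1-a\gamma_k)\tfrac{2}{a}\gamma_{k-1}^{p} + \gamma_k^{p+1}$.

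The crux is then to absorb $\gamma_{k-1}^{p}$ back into $\gamma_k^{p}$, and this is precisely the role of the step-size ratio hypothesis $\gamma_{k-1}^{p}/\gamma_k^{p} \le 1 + \tfrac{a}{2}\gamma_k^{p}$: substituting it turns the right-hand side into $\tfrac{2}{a}\gamma_k^{p}$ plus correction terms that are monomials of higher degree in $\gamma_k$. The remaining task is an elementary inequality asserting that, after cancellation, these corrections do not exceed zero, i.e. that the $\gamma_k^{p+1}$ gradient term is dominated by the slack generated from the ratio condition and the contraction. I expect this final scalar verification to be the main obstacle, since it is the only place where the exponent appearing in the ratio condition and the magnitude bound $\gamma_1<2/a$ must be used in concert, and the admissible range of $\gamma_k$ has to be checked with care; everything upstream is bookkeeping on the telescoped recursion. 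An equivalent route, which I would keep as a fallback if the direct induction proves delicate, is to rewrite $\gamma_j^{p+1} = \tfrac{1}{a}\bigl(\gamma_j^{p} - (1-a\gamma_j)\gamma_j^{p}\bigr)$ and telescope $\tfrac{1}{a}\gamma_j^{p}\prod_{\ell=j+1}^{k}(1-a\gamma_\ell)$ against $S_k$, reducing the bound to controlling a sum of the non-negative increments $(\gamma_{j-1}^{p}-\gamma_j^{p})$ weighted by the ratio slack.
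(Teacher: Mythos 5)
Your reduction to the one-step recursion $S_k=(1-a\gamma_k)S_{k-1}+\gamma_k^{p+1}$ and induction on the target $S_k\le\frac{2}{a}\gamma_k^{p}$ is the standard route (the paper itself gives no proof and defers to a citation), and for $p=1$ your argument does close. The genuine gap is precisely the step you defer as ``an elementary inequality.'' In the regime $1-a\gamma_k\ge 0$, substituting the induction hypothesis and then the ratio condition $\gamma_{k-1}^{p}\le\gamma_k^{p}\bigl(1+\tfrac{a}{2}\gamma_k^{p}\bigr)$ leaves
\[
S_k\;\le\;\tfrac{2}{a}\gamma_k^{p}\;+\;\gamma_k^{2p}-\gamma_k^{p+1}-a\gamma_k^{2p+1},
\]
so you must show $\gamma_k^{2p}\le\gamma_k^{p+1}+a\gamma_k^{2p+1}$, i.e.\ $\gamma_k^{p-1}\le 1+a\gamma_k^{p}$. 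This is trivially true for $p=1$ but fails for $p\ge 2$ inside the admissible range $\gamma_k<2/a$ (take $p=2$, $a=1/10$, $\gamma_k=5$: $5\not\le 3.5$). This is not a slack you can recover by sharper bookkeeping: the lemma as literally stated is false for $p\ge 2$. With $p=2$, $a=1/10$, $\gamma_1=16.39$ and $\gamma_k=8$ for $k\ge 2$, all hypotheses hold ($\gamma_1<20$ and $\gamma_1^2/\gamma_2^2\approx 4.197\le 1+\tfrac{a}{2}\gamma_2^2=4.2$), yet $S_2=\gamma_1^3(1-a\gamma_2)+\gamma_2^3\approx 1393>1280=\tfrac{2}{a}\gamma_2^2$. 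The hypothesis in the standard versions of this lemma is $\gamma_{k-1}^{p}/\gamma_k^{p}\le 1+\tfrac{a}{2}\gamma_k$, linear in $\gamma_k$ rather than in $\gamma_k^{p}$; with that reading your computation leaves exactly $-a\gamma_k^{p+2}\le 0$ and the induction closes for every $p$. So the correct conclusion of your analysis should have been that the stated ratio condition cannot be the intended one, rather than that a careful scalar check will rescue it.

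Two smaller points. In the branch $1-a\gamma_k<0$ you discard $(1-a\gamma_k)S_{k-1}$ as non-positive, which presumes $S_{k-1}\ge 0$; once some $a\gamma_\ell>1$ the partial products $\prod_{\ell}(1-a\gamma_\ell)$ can be negative, so non-negativity of $S_{k-1}$ is not automatic and must either be carried as part of the induction or obtained by restricting to $a\gamma_1\le 1$. Also, the condition ``for any $k\ge 1$'' references an undefined $\gamma_0$; you implicitly (and reasonably) use it only for $k\ge 2$, but this should be said.
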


\bibliographystyle{ieeetr}
\bibliography{ecl.bib}

\begin{thebibliography}{10}

\bibitem{cohen2017projected}
K.~Cohen, A.~Nedi{\'c}, and R.~Srikant, ``On projected stochastic gradient
  descent algorithm with weighted averaging for least squares regression,''
  {\em IEEE Transactions on Automatic Control}, vol.~62, no.~11,
  pp.~5974--5981, 2017.

\bibitem{mateos2012distributed}
G.~Mateos and G.~B. Giannakis, ``Distributed recursive least-squares: Stability
  and performance analysis,'' {\em IEEE Transactions on Signal Processing},
  vol.~60, no.~7, pp.~3740--3754, 2012.

\bibitem{doan2019finite}
T.~Doan, S.~Maguluri, and J.~Romberg, ``Finite-time analysis of distributed td
  (0) with linear function approximation on multi-agent reinforcement
  learning,'' in {\em International Conference on Machine Learning},
  pp.~1626--1635, PMLR, 2019.

\bibitem{forrester2007multi}
A.~I. Forrester, A.~S{\'o}bester, and A.~J. Keane, ``Multi-fidelity
  optimization via surrogate modelling,'' {\em Proceedings of the royal society
  a: mathematical, physical and engineering sciences}, vol.~463, no.~2088,
  pp.~3251--3269, 2007.

\bibitem{nedic2017fast}
A.~Nedi{\'c}, A.~Olshevsky, and C.~A. Uribe, ``Fast convergence rates for
  distributed non-bayesian learning,'' {\em IEEE Transactions on Automatic
  Control}, vol.~62, no.~11, pp.~5538--5553, 2017.

\bibitem{cohen2016distributed}
K.~Cohen, A.~Nedi{\'c}, and R.~Srikant, ``Distributed learning algorithms for
  spectrum sharing in spatial random access wireless networks,'' {\em IEEE
  Transactions on Automatic Control}, vol.~62, no.~6, pp.~2854--2869, 2016.

\bibitem{brisimi2018federated}
T.~S. Brisimi, R.~Chen, T.~Mela, A.~Olshevsky, I.~C. Paschalidis, and W.~Shi,
  ``Federated learning of predictive models from federated electronic health
  records,'' {\em International journal of medical informatics}, vol.~112,
  pp.~59--67, 2018.

\bibitem{chang2020distributed}
T.-H. Chang, M.~Hong, H.-T. Wai, X.~Zhang, and S.~Lu, ``Distributed learning in
  the nonconvex world: From batch data to streaming and beyond,'' {\em IEEE
  Signal Processing Magazine}, vol.~37, no.~3, pp.~26--38, 2020.

\bibitem{nedic2020distributed}
A.~Nedic, ``Distributed gradient methods for convex machine learning problems
  in networks: Distributed optimization,'' {\em IEEE Signal Processing
  Magazine}, vol.~37, no.~3, pp.~92--101, 2020.

\bibitem{lian2017decentralized}
X.~Lian, C.~Zhang, H.~Zhang, C.-J. Hsieh, W.~Zhang, and J.~Liu, ``Can
  decentralized algorithms outperform centralized algorithms? a case study for
  decentralized parallel stochastic gradient descent,'' 2017.

\bibitem{pu2021sharp}
S.~Pu, A.~Olshevsky, and I.~C. Paschalidis, ``A sharp estimate on the transient
  time of distributed stochastic gradient descent,'' {\em IEEE Transactions on
  Automatic Control}, 2021.

\bibitem{C2}
A.~Koloskova, T.~Lin, and S.~U. Stich, ``An improved analysis of gradient
  tracking for decentralized machine learning,'' {\em Advances in Neural
  Information Processing Systems}, vol.~34, pp.~11422--11435, 2021.

\bibitem{pu2021distributed}
S.~Pu and A.~Nedi{\'c}, ``Distributed stochastic gradient tracking methods,''
  {\em Mathematical Programming}, vol.~187, no.~1, pp.~409--457, 2021.

\bibitem{xin2021improved}
R.~Xin, U.~A. Khan, and S.~Kar, ``An improved convergence analysis for
  decentralized online stochastic non-convex optimization,'' {\em IEEE
  Transactions on Signal Processing}, vol.~69, pp.~1842--1858, 2021.

\bibitem{C3}
S.~A. Alghunaim and K.~Yuan, ``A unified and refined convergence analysis for
  non-convex decentralized learning,'' {\em IEEE Transactions on Signal
  Processing}, vol.~70, pp.~3264--3279, 2022.

\bibitem{yuan2023removing}
K.~Yuan, S.~A. Alghunaim, and X.~Huang, ``Removing data heterogeneity influence
  enhances network topology dependence of decentralized sgd,'' {\em Journal of
  Machine Learning Research}, vol.~24, no.~280, pp.~1--53, 2023.

\bibitem{huang2022improving}
K.~Huang and S.~Pu, ``Improving the transient times for distributed stochastic
  gradient methods,'' {\em IEEE Transactions on Automatic Control}, 2022.

\bibitem{C4}
Y.~Lu and C.~De~Sa, ``Optimal complexity in decentralized training,'' in {\em
  International Conference on Machine Learning}, pp.~7111--7123, PMLR, 2021.

\bibitem{yuan2022revisiting}
K.~Yuan, X.~Huang, Y.~Chen, X.~Zhang, Y.~Zhang, and P.~Pan, ``Revisiting
  optimal convergence rate for smooth and non-convex stochastic decentralized
  optimization,'' {\em Advances in Neural Information Processing Systems},
  vol.~35, pp.~36382--36395, 2022.

\bibitem{tsitsiklis1984problems}
J.~N. Tsitsiklis, ``Problems in decentralized decision making and
  computation.,'' tech. rep., Massachusetts Inst of Tech Cambridge Lab for
  Information and Decision Systems, 1984.

\bibitem{ram2008distributed}
S.~S. Ram, A.~Nedich, and V.~V. Veeravalli, ``Distributed stochastic
  subgradient projection algorithms for convex optimization,'' {\em arXiv
  preprint arXiv:0811.2595}, 2008.

\bibitem{bianchi2012convergence}
P.~Bianchi and J.~Jakubowicz, ``Convergence of a multi-agent projected
  stochastic gradient algorithm for non-convex optimization,'' {\em IEEE
  transactions on automatic control}, vol.~58, no.~2, pp.~391--405, 2012.

\bibitem{nedic2017achieving}
A.~Nedic, A.~Olshevsky, and W.~Shi, ``Achieving geometric convergence for
  distributed optimization over time-varying graphs,'' {\em SIAM Journal on
  Optimization}, vol.~27, no.~4, pp.~2597--2633, 2017.

\bibitem{di2016next}
P.~Di~Lorenzo and G.~Scutari, ``Next: In-network nonconvex optimization,'' {\em
  IEEE Transactions on Signal and Information Processing over Networks},
  vol.~2, no.~2, pp.~120--136, 2016.

\bibitem{tang2018d}
H.~Tang, X.~Lian, M.~Yan, C.~Zhang, and J.~Liu, ``$ d^{2}$: Decentralized
  training over decentralized data,'' in {\em International Conference on
  Machine Learning}, pp.~4848--4856, PMLR, 2018.

\bibitem{yuan2018exact}
K.~Yuan, B.~Ying, X.~Zhao, and A.~H. Sayed, ``Exact diffusion for distributed
  optimization and learning—part i: Algorithm development,'' {\em IEEE
  Transactions on Signal Processing}, vol.~67, no.~3, pp.~708--723, 2018.

\bibitem{assran2019stochastic}
M.~Assran, N.~Loizou, N.~Ballas, and M.~Rabbat, ``Stochastic gradient push for
  distributed deep learning,'' in {\em International Conference on Machine
  Learning}, pp.~344--353, PMLR, 2019.

\bibitem{towfic2016excess}
Z.~J. Towfic, J.~Chen, and A.~H. Sayed, ``Excess-risk of distributed stochastic
  learners,'' {\em IEEE Transactions on Information Theory}, vol.~62, no.~10,
  pp.~5753--5785, 2016.

\bibitem{lian2017can}
X.~Lian, C.~Zhang, H.~Zhang, C.-J. Hsieh, W.~Zhang, and J.~Liu, ``Can
  decentralized algorithms outperform centralized algorithms? a case study for
  decentralized parallel stochastic gradient descent,'' {\em Advances in Neural
  Information Processing Systems}, vol.~30, 2017.

\bibitem{vogels2023beyond}
T.~Vogels, H.~Hendrikx, and M.~Jaggi, ``Beyond spectral gap: the role of the
  topology in decentralized learning,'' {\em Journal of Machine Learning
  Research}, vol.~24, no.~355, pp.~1--31, 2023.

\bibitem{ryabinin2021moshpit}
M.~Ryabinin, E.~Gorbunov, V.~Plokhotnyuk, and G.~Pekhimenko, ``Moshpit sgd:
  Communication-efficient decentralized training on heterogeneous unreliable
  devices,'' in {\em Advances in Neural Information Processing Systems},
  vol.~34, 2021.

\bibitem{hendrikx2020statistically}
H.~Hendrikx, L.~Xiao, S.~Bubeck, F.~Bach, and L.~Massoulie, ``Statistically
  preconditioned accelerated gradient method for distributed optimization,'' in
  {\em International conference on machine learning}, pp.~4203--4227, PMLR,
  2020.

\bibitem{li2022role}
Q.~Li and H.-T. Wai, ``On the role of data homogeneity in multi-agent
  non-convex stochastic optimization,'' in {\em 2022 IEEE 61st Conference on
  Decision and Control (CDC)}, pp.~5843--5848, IEEE, 2022.

\bibitem{beznosikov2021distributed}
A.~Beznosikov, G.~Scutari, A.~Rogozin, and A.~Gasnikov, ``Distributed
  saddle-point problems under data similarity,'' {\em Advances in Neural
  Information Processing Systems}, vol.~34, pp.~8172--8184, 2021.

\bibitem{sun2022distributed}
Y.~Sun, G.~Scutari, and A.~Daneshmand, ``Distributed optimization based on
  gradient tracking revisited: Enhancing convergence rate via surrogation,''
  {\em SIAM Journal on Optimization}, vol.~32, no.~2, pp.~354--385, 2022.

\bibitem{tian2022acceleration}
Y.~Tian, G.~Scutari, T.~Cao, and A.~Gasnikov, ``Acceleration in distributed
  optimization under similarity,'' in {\em International Conference on
  Artificial Intelligence and Statistics}, pp.~5721--5756, PMLR, 2022.

\bibitem{aldous1995reversible}
D.~Aldous and J.~Fill, {\em Reversible Markov chains and random walks on
  graphs}.
\newblock Berkeley, 1995.

\bibitem{ghadimi2013stochastic}
S.~Ghadimi and G.~Lan, ``Stochastic first-and zeroth-order methods for
  nonconvex stochastic programming,'' {\em SIAM Journal on Optimization},
  vol.~23, no.~4, pp.~2341--2368, 2013.

\bibitem{gower2019sgd}
R.~M. Gower, N.~Loizou, X.~Qian, A.~Sailanbayev, E.~Shulgin, and
  P.~Richt{\'a}rik, ``Sgd: General analysis and improved rates,'' in {\em
  International conference on machine learning}, pp.~5200--5209, PMLR, 2019.

\bibitem{arjevani2022lower}
Y.~Arjevani, Y.~Carmon, J.~C. Duchi, D.~J. Foster, N.~Srebro, and B.~Woodworth,
  ``Lower bounds for non-convex stochastic optimization,'' {\em Mathematical
  Programming}, pp.~1--50, 2022.

\bibitem{agarwal2012information}
A.~Agarwal, P.~L. Bartlett, P.~Ravikumar, and M.~J. Wainwright,
  ``Information-theoretic lower bounds on the oracle complexity of stochastic
  convex optimization,'' {\em IEEE Transactions on Information Theory},
  vol.~58, no.~5, pp.~3235--3249, 2012.

\bibitem{network_indenp}
S.~Pu, A.~Olshevsky, and I.~C. Paschalidis, ``Asymptotic network independence
  in distributed stochastic optimization for machine learning: Examining
  distributed and centralized stochastic gradient descent,'' {\em IEEE Signal
  Processing Magazine}, vol.~37, no.~3, pp.~114--122, 2020.

\bibitem{nedic2009distributed}
A.~Nedic and A.~Ozdaglar, ``Distributed subgradient methods for multi-agent
  optimization,'' {\em IEEE Transactions on Automatic Control}, vol.~54, no.~1,
  pp.~48--61, 2009.

\bibitem{wai2020accelerating}
H.-T. Wai, W.~Shi, C.~A. Uribe, A.~Nedi{\'c}, and A.~Scaglione, ``Accelerating
  incremental gradient optimization with curvature information,'' {\em
  Computational Optimization and Applications}, vol.~76, no.~2, pp.~347--380,
  2020.

\bibitem{bottou2018optimization}
L.~Bottou, F.~E. Curtis, and J.~Nocedal, ``Optimization methods for large-scale
  machine learning,'' {\em Siam Review}, vol.~60, no.~2, pp.~223--311, 2018.

\bibitem{nesterov2003introductory}
Y.~Nesterov, {\em Introductory lectures on convex optimization: A basic
  course}, vol.~87.
\newblock Springer Science \& Business Media, 2003.

\bibitem{sutton1988learning}
R.~S. Sutton, ``Learning to predict by the methods of temporal differences,''
  {\em Machine learning}, vol.~3, pp.~9--44, 1988.

\bibitem{di2022analysis}
S.~Di-Castro, S.~Mannor, and D.~Di~Castro, ``Analysis of stochastic processes
  through replay buffers,'' in {\em International Conference on Machine
  Learning}, pp.~5039--5060, PMLR, 2022.

\bibitem{liu2021distributed}
R.~Liu and A.~Olshevsky, ``Distributed td (0) with almost no communication,''
  {\em IEEE Control Systems Letters}, 2023.

\bibitem{bhandari2018finite}
J.~Bhandari, D.~Russo, and R.~Singal, ``A finite time analysis of temporal
  difference learning with linear function approximation,'' in {\em Conference
  on learning theory}, pp.~1691--1692, PMLR, 2018.

\bibitem{dalal2018finite}
G.~Dalal, B.~Sz{\"o}r{\'e}nyi, G.~Thoppe, and S.~Mannor, ``Finite sample
  analyses for td (0) with function approximation,'' in {\em Proceedings of the
  AAAI Conference on Artificial Intelligence}, 2018.

\bibitem{srikant2019finite}
R.~Srikant and L.~Ying, ``Finite-time error bounds for linear stochastic
  approximation andtd learning,'' in {\em Conference on Learning Theory},
  pp.~2803--2830, PMLR, 2019.

\bibitem{cutkosky2020momentum}
A.~Cutkosky and H.~Mehta, ``Momentum improves normalized sgd,'' in {\em
  International conference on machine learning}, pp.~2260--2268, PMLR, 2020.

\bibitem{fang2019sharp}
C.~Fang, Z.~Lin, and T.~Zhang, ``Sharp analysis for nonconvex sgd escaping from
  saddle points,'' in {\em Conference on Learning Theory}, pp.~1192--1234,
  PMLR, 2019.

\bibitem{le2023refined}
B.~Le~Bars, A.~Bellet, M.~Tommasi, E.~Lavoie, and A.-M. Kermarrec, ``Refined
  convergence and topology learning for decentralized sgd with heterogeneous
  data,'' in {\em International Conference on Artificial Intelligence and
  Statistics}, pp.~1672--1702, PMLR, 2023.

\bibitem{qiangmulti}
Q.~Li, C.-Y. Yau, and H.~T. Wai, ``Multi-agent performative prediction with
  greedy deployment and consensus seeking agents,'' in {\em Advances in Neural
  Information Processing Systems}, 2022.

\end{thebibliography}

\begin{IEEEbiography}
[{\includegraphics[width=1in,height=1.25in,clip,keepaspectratio]{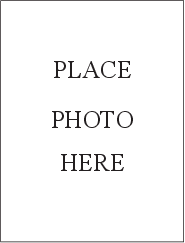}}]{Qiang Li} received the B.S. degree from the Harbin Institute of Technology, China in 2020. He is currently pursuing the Ph.D. degree in the System Engineering and Engineering Management at the Chinese University of Hong Kong (CUHK), Hong Kong, SAR. His research interests include distributed optimization, stochastic optimization.
    
\end{IEEEbiography}

\begin{IEEEbiography}[{\includegraphics[width=1in,height=1.25in,clip,keepaspectratio]{ieee/Place_Photo.eps}}]{Hoi-To Wai} received the Ph.D. degree in electrical engineering from Arizona State University (ASU), Tempe, AZ, USA, in Fall 2017, the B.Eng. (with First Class Hons.) and M.Phil. degrees in electronic engineering from The Chinese University of Hong Kong (CUHK), Hong Kong, in 2010 and 2012, respectively. He is currently an Assistant Professor with the Department of Systems Engineering and Engineering Management, CUHK. He has held research positions with ASU, University of California, Davis, CA, USA, Telecom ParisTech, Paris, France, Ecole Polytechnique, Palaiseau, France, LIDS, Massachusetts Institute of Technology, Cambridge, MA, USA. His research interests include signal processing, machine learning, and distributed optimization, with a focus of their applications to network science. His dissertation was the recipient of the 2017’s Dean’s Dissertation Award from the Ira A. Fulton Schools of Engineering of ASU, and Best Student Paper Award at ICASSP 2018. He is currently an Associate Editor for IEEE Transactions on Signal and Information Processing on Networks and Elsevier's Signal Processing.
\vspace{-1cm}
\end{IEEEbiography}

\end{document}